\newtheorem{assum}{Assumption}
\newcommand{\eps}{\epsilon}
\newcommand{\bw}{\boldsymbol{w}}
\newcommand{\bI}{\boldsymbol{I}}
\newcommand{\ind}{\,\mbox{d}}
\newcommand{\bk}{{\boldsymbol{k}}}
\newcommand{\bmu}{{\boldsymbol{\mu}}}
\newcommand{\brho}{{\boldsymbol{\rho}}}
\newcommand{\bchi}{{\boldsymbol{\chi}}}
\newcommand{\boldeta}{{\boldsymbol{\eta}}}
\newcommand{\balpha}{{\boldsymbol{\alpha}}}
\newcommand{\bx}{\boldsymbol{x}}
\newcommand{\by}{\boldsymbol{y}}
\newcommand{\br}{\boldsymbol{r}}
\newcommand{\bz}{\boldsymbol{z}}
\newcommand{\bn}{\boldsymbol{n}}
\newcommand{\bj}{\boldsymbol{j}}
\newcommand{\hh}{\hspace*{0.7pt}}
\newcommand{\ww}{w}
\newcommand{\es}{\hspace*{0.7pt}}
\newcommand{\nes}{\hspace*{-0.65pt}}
\newcommand{\wtx}{\tilde{\bx}}
\newcommand{\wtu}{\tilde{u}}
\newcommand{\up}{u^{\mbox{\tiny ($p$)}}}
\newcommand{\Up}{U^{\mbox{\tiny ($p$)}}}
\newtheorem{remark}{Remark}
\begin{document}

\title{A convergent low-wavenumber, high-frequency homogenization of the wave equation in periodic media with a source term}

\author{Shixu Meng\footnotemark[1], Othman Oudghiri-Idrissi \footnotemark[2], Bojan B. Guzina\footnotemark[3]}
\renewcommand{\thefootnote}{\fnsymbol{footnote}}
\footnotetext[1]{Institute of Applied Mathematics, Academy of Mathematics and Systems Science, Chinese Academy of Sciences, Beijing, 100190, China.  {\tt shixumeng@amss.ac.cn}}
\footnotetext[2]{Department of Civil, Environmental, \& Geo- Engineering, University of Minnesota, Twin Cities,
 Minneapolis, MN 55405, USA.  {\tt oudgh001@umn.edu}}
\footnotetext[3]{Department of Civil, Environmental, \& Geo- Engineering, University of Minnesota, Twin Cities,
 Minneapolis, MN 55405, USA.  {\tt guzin001@umn.edu}}

\maketitle

\begin{abstract}
We pursue a low-wavenumber, second-order homogenized solution of the time-harmonic wave equation { at both low and high frequency} in periodic media with a source term whose frequency resides inside a band gap. Considering the wave motion in an unbounded medium $\mathbb{R}^d$ ($d\geqslant1$), we first use the (Floquet-)Bloch transform to formulate an equivalent variational problem in a bounded domain. By investigating the source term's projection onto certain periodic functions, the second-order model can then be derived via asymptotic expansion of the Bloch eigenfunction and the germane dispersion relationship. We establish the convergence of the second-order homogenized solution, and we include numerical examples to illustrate the convergence result.
\end{abstract}

\begin{keywords}
waves in periodic media, dynamic homogenization, finite frequency, band gap, (Floquet-)Bloch transform, variational formulation
\end{keywords}
%------------------------------------------------------------------------------------------------------------------
\section{Introduction} \label{Introduction}
%------------------------------------------------------------------------------------------------------------------
%-----------------------------------------------------------------------------------------------------------------
\noindent {Let ${\boldsymbol{I}}:=\{\bz: \bz \in \mathbb{Z}^d \}$}, and consider the time-harmonic wave equation
\begin{equation}\label{PDE fast}
-\nabla\!\cdot\!\big(G(\br/\eps)\nabla {  U_{\epsilon} }\big) - { \Omega_{\epsilon}}^2\rho(\br/\eps)\hh {  U_{\epsilon} } \:=\: { f_{\epsilon}(\br)} \qquad\text{in~~}\mathbb{R}^d, ~ d\geqslant 1
\end{equation}
\noindent  {with highly oscillating coefficients $(\eps\!\to\!0)$} at frequency~${ \Omega_{\epsilon}}$,
where~$G$ and~$\rho$ are $\boldsymbol{I}$-periodic i.e.  
\begin{eqnarray} \label{Grho}
G(\br+   \bz) = G(\br), \quad \rho(\br+  \bz) = \rho(\br), \qquad \forall~\br \in \mathbb{R}^d, ~ \bz \in \mathbb{Z}^d.
\end{eqnarray}
Here~$f_{\epsilon}$ denotes the source term, while $G$ and~$\rho$ are assumed to be real-valued, sufficiently smooth functions bounded away from zero. The regularity on $\rho$ and $G$ may be relaxed and we comment on this later. When~$d\!=\!2$, one may interpret~(\ref{PDE fast}) in the context of anti-plane shear (elastic) waves, in which case~${ U_\epsilon},G,\rho$ and~$f_{\epsilon}$ take respectively the roles of transverse displacement, shear modulus, mass density, and body force. 

We seek a low-wavenumber, second-order asymptotic solution~$U_2$ of~\eqref{PDE fast}, formulated in terms of the perturbation parameter $\eps$, such that
\begin{equation}\label{aux1}
\|{  U_{\epsilon} }-U_2\|_{L^2(\mathbb{R}^d)} = O(\eps^3) \quad \text{as } \eps\to 0. 
\end{equation}
More precisely, we are interested in the respective field equations that $U_2$ and its ``mean'' i.e. effective counterpart satisfy. {  For completeness, we consider both low and high frequency cases. Both the driven frequency $\Omega_{\epsilon}$ and source term $f_{\epsilon}$ may depend on $\epsilon$. As will be seen shortly, in the low (resp. high) frequency case ${ \Omega^2_{\epsilon}}$ is $O(1)$  (resp. $O(\eps^{-2})$) and it is a perturbation of the (scaled) Bloch eigenvalue;  ${ f_{\epsilon}}$ is (or more precisely, the multiplication of a certain fast eigenfunction and) the Fourier transform of a compactly-supported function. We specify the  explicit forms of the driven frequency $\Omega_{\epsilon}$ and the source term $f_{\epsilon}$ later by equations \eqref{bg3} and \eqref{source1} respectively. Furthermore we consider the ($\epsilon$ scaled) driven frequency belongs to the band gap (whereby no radiation condition is required in our context), and we refer to~\cite{kuchment2012green,kha2015green} for similar problems near internal edges of the spectra of periodic elliptic operators.}

\subsection{Background and motivation}

Wave motion in periodic and otherwise microstructured media \cite{lions2011asymptotic,JJWM,kuchment2016overview,jikov1994} has keen applications in science, engineering, and technology owing to the emergence of metamaterials facilitating the phenomena such as cloaking, sub-wavelength imaging, and vibration control~\cite{Capo2009,Bava2013}. To simulate the underpinning physical processes effectively, of particular interest is the development of asymptotic models. In this vein, homogenized (i.e. effective) models of waves in periodic media that transcend the quasi-static limit have attracted much attention. In particular, effective models using the concept of Bloch waves were studied in \cite{santosa1991dispersive, conca2002bloch,sjoberg2005floquet,doi:10.1137/040607034,dohnal2014bloch}, while \cite{chen2001dispersive,wautier2015second} investigated the dispersive wave motion via multi-scale homogenization. We also mention related work on homogenization \cite{kristensson2003homogenization,bouchitte2010homogenization,wellander2009two}. A formal link between the homogenization using Bloch waves and two-scale homogenization, {restricted to the periodic index of refraction (no periodicity in the principal part)}, was established in~\cite{allaire2016comparison}, including an account for the \textit{source term}; the study suggests that the source term  must undergo asymptotic correction in order for the equivalence between the two methods to hold. On the other hand, Willis' approach to dynamic homogenization has brought much attention in the engineering community as a means to deal with periodic and random composites, see for instance \cite{milton2007modifications,willis2011effective,norris2012analytical,nassar2015willis}. Recently, \cite{meng2018dynamic} investigated a second-order asymptotic expansion of the Willis' model, which demonstrates that the source term must be homogenized when considering the two-scale, second-order homogenization of the wave equation. We refer the reader to \cite{cakoni2016homogenization,cakoni2019scattering,lin2018leading} for related works on the scattering by bounded periodic structures.

However, the foregoing studies primarily focused on the low-frequency wave motion, either in the frequency- or time-domain. Recent advances in metamaterials have motivated the studies on high- (or finite-) frequency homogenization \cite{craster2010high,harutyunyan2016high,zhang2010multiscale}. Within the framework of {Bloch-wave homogenization}, \cite{guzina2019rational} pursued a (second-order) {finite-frequency, finite-wavelength} effective description of the wave equation with a source term. To our knowledge, however, there are no convergence results on the {high-frequency homogenization of wave motion in periodic media} when the \textit{source term} is present. This motives our work, which  considers the situations when the (high or low) frequency resides inside a band gap. For completeness, we note that this subject is also relevant to the asymptotic behavior of the Green's function near internal edges of the spectra of periodic elliptic operators~\cite{kuchment2012green,kha2015green}. {In our investigation, we apply various mapping properties of the (Floquet)-Bloch transform~\cite{lechleiter2017floquet} to formulate the wave motion in~$\mathbb{R}^d$ as a variational problem in the Wigner-Seitz cell. } {  We emphasize that our work is different from \cite{guzina2019rational}, because it provides a mathematically rigorous proof of the second-order approximation at both low and high frequency, and the underlying approach may play a key role in our ability to extend the current work to asymptotic models of wave motion across periodic surfaces and interfaces of periodic media.}

The paper is organized as follows. In Section \ref{Prelim}, we provide the necessary background on the (Floquet)-Bloch transform and wave dispersion in periodic media. Section \ref{Bloch expansion} formulates the {wave motion in~$\mathbb{R}^d$ with highly oscillating periodic coefficients (due to a source term) as a variational problem in a bounded domain}, and gives both variational and integral representations of the germane wavefield. Here we also discuss the class of source terms under consideration, and we establish the key properties of this class when projected onto periodic functions. We then show in  Section~\ref{Significant contri} that the principal contribution to the second-order homogenization arises {from the nearest ($p$th) branch of the dispersion relationship}. We further give the asymptotics of latter and the corresponding eigenfunction in Section~\ref{Asymptotic eigenfunction dispersion relation}. These results, together with the  source term, are  used to obtain the sought convergence result on the second-order, high-frequency homogenization in Section~\ref{Higher order U}. The analytical results are illustrated in Section \ref{Numerical example} by way of numerical simulations, performed for an example periodic structure. 

\section{(Floquet-)Bloch transform and dispersion relationship} \label{Prelim}

In this section, we provide preliminary background on the (Floquet-)Bloch transform and the dispersion relationship characterizing the periodic medium featured in~\eqref{PDE fast}.

\subsection{(Floquet-)Bloch transform in $\mathbb{R}^d$}

The key elements of (Floquet-)Bloch transform introduced in the sequel are adapted from \cite{lechleiter2017floquet} (see also \cite{kuchment2016overview}). To begin with, we introduce the Wigner-Seitz cell 
\begin{eqnarray} \label{wgcell}
W_{\!\boldsymbol{I}}:=\{   \tilde{\bx}: \tilde{\bx} \in \mathbb{R}^d, \, -1/2 \le \tilde{\bx}_{1,\cdots,d} \le 1/2 \} \subset \mathbb{R}^d.
\end{eqnarray}
{  This} gives the dual or reciprocal lattice ${\boldsymbol{I}}^*\!:=\{ 2\pi \bj: \bj \in \mathbb{Z}^d \}$ and the Brillouin zone
\begin{eqnarray} \label{Brill}
W_{\!{\boldsymbol{I}}^*}:=\{ 2\pi  {\bk}: {\bk} \in \mathbb{R}^d, \, -1/2 \le {\bk}_{1,\cdots,d} \le 1/2 \} \subset \mathbb{R}^d.
\end{eqnarray}
With reference to the periodicity { ${\boldsymbol{I}}=\{\bz: \bz \in \mathbb{Z}^d \}$} and arbitrary wavenumber $\bk\!\in\!\mathbb{R}^d$, any function $\phi(\bx)$ that satisfies  
\begin{eqnarray*}
\phi(\bx+  \bj) \,=\, e^{i \bk \cdot  \bj} \es \phi(\bx), \quad \forall\bx \in \mathbb{R}^d, ~ \bj\in\mathbb{Z}^d,
\end{eqnarray*}
is called $\bk$-quasiperiodic { with respect to $\boldsymbol{I}$}. In passing, we note that any $\boldsymbol{I}$-periodic function becomes $\bk$-quasiperiodic upon multiplication by $e^{i\bk\cdot \bx}$. Note that if a function is $\bk$-quasiperiodic with respect to $\boldsymbol{I}$, then its values (in the variable $\bx$) at $W_{\!{\boldsymbol{I}}}$ fully determines its values at $\mathbb{R}^d$. 

With the above definitions, the (Floquet-)Bloch transform $\mathcal{J}_{\mathbb{R}^d}$ of a function $\psi\in C_0^\infty(\mathbb{R}^d)$ is given by
 \begin{eqnarray} \label{Def Bloch transform}
{ (\mathcal{J}_{\mathbb{R}^d} \psi)} (\bk;\bx) = \frac{1}{(2\pi)^{d/2}} \sum_{\bj \in \mathbb{Z}^d} \psi(\bx+  \bj) e^{-i \bk \cdot  \bj}, \quad \bk {  \,\in W_{\!{\boldsymbol{I}}^*}},\,\bx \in \mathbb{R}^d.
\end{eqnarray}
{ Since $\big(\mathcal{J}_{\mathbb{R}^d} \psi \big)(\bk;\bx)$ is $\bk$-quasiperiodic in the variable $\bx$, the knowledge of $\big(\mathcal{J}_{\mathbb{R}^d} \psi \big)(\bk;\bx)$ at $\bx \in W_{\!\boldsymbol{I}}$ fully determines all its values at $\bx \in \mathbb{R}^d$.} {   When restricting a given function in the domain $W_{\!\boldsymbol{I}}$,  we write variables $\tilde{\bx}$ instead of $\bx$ for the best readability and associate this given function with the tilde symbol~~$\tilde{}$~~as well.}

It is readily seen that $\mathcal{J}_{\mathbb{R}^d}$ commutes with $\boldsymbol{I}$-periodic functions; namely {  if} $q$ is $\boldsymbol{I}$-periodic, then
 \begin{equation} \label{Periodic q commute}
{ \big(\mathcal{J}_{\mathbb{R}^d} (q\psi) \big)}(\bk;\bx) = \frac{1}{(2\pi)^{d/2}} \sum_{\bj \in \mathbb{Z}^d}q(\bx+   \bj)  \psi(\bx+   \bj) e^{-i \bk \cdot   \bj} = q(\bx)(\mathcal{J}_{\mathbb{R}^d} \psi )(\bk;\bx).
\end{equation}

To facilitate the ensuing analysis, we introduce several function spaces. We first recall the Fourier transform 
\begin{eqnarray}
(\mathcal{F} \psi) (\bz) := \frac{1}{(2\pi)^{d/2}} \int_{\mathbb{R}^d} e^{-i \bz \cdot \bx} \psi(\bx) \ind \bx
\end{eqnarray}
for $\psi \in C_0^\infty(\mathbb{R}^d)$ and $\bz \in \mathbb{R}^d$. This transform extends to an isometry on $L^2(\mathbb{R}^d)$ and defines Bessel potential spaces by
\begin{eqnarray*}
\mathcal{H}^s(\mathbb{R}^d): = \Big\{ \psi \in \mathcal{D}'(\mathbb{R}^d): \,\int_{\mathbb{R}^d} (1+|\bz|^2)^s \es |(\mathcal{F} \psi) (\bz) |^2 \ind \bz < \infty \Big\}, \quad s \in \mathbb{R}, 
\end{eqnarray*}
{where $\mathcal{D}'(\mathbb{R}^d)$ denotes the space of distributions in $\mathbb{R}^d$.}

We next introduce (quasi-)periodic Sobolev spaces. Specifically, we denote by $\mathcal{H}^s_{\bk}(W_{\!\boldsymbol{I}})$ the Hilbert space containing all $\bk$-quasiperiodic distributions (which contains the products of all periodic distributions with $e^{i \bk \cdot \bx}$ \cite{saranen2013periodic}) with finite norm
\begin{eqnarray}\label{Hsk Fourier norm}
\| \tilde{\psi} \|_{\mathcal{H}^s_{\bk}(W_{\!\boldsymbol{I}})}:=\bigg( \sum_{\bj \in \mathbb{Z}^d} (1+|\bj|^2)^s |{ c_{\bj,\bk}}|^2 \bigg)^{1/2} < \infty, \quad s \in \mathbb{R},
\end{eqnarray}
where $
{ c_{\bj,\bk}} := \int_{W_{\!\boldsymbol{I}}} \tilde{\psi}(\tilde{\bx}) e^{-i \bk \cdot \tilde{\bx}} \es \overline{e^{i 2\pi \bj \cdot \tilde{\bx}}} ~\ind \tilde{\bx}$. {  We recall that the tilde symbol~~$\tilde{}$~~has been associated with the distribution and its variable (defined in $W_{\!\boldsymbol{I}}$).}

Now we are ready to introduce adapted function spaces in $(\bk;\tilde{\bx})$. We denote by $L^2(W_{\!{\boldsymbol{I}}^*};\mathcal{H}^s_{\bk} (W_{\!\boldsymbol{I}}))$  the Sobolev space containing  distributions in $\mathcal{D}'(\mathbb{R}^d \!\times\! \mathbb{R}^d)$ that are (i) $2\pi\boldsymbol{I}$-periodic in the first variable; (ii) first-variable-quasiperiodic with respect to $\boldsymbol{I}$ in the second variable, and (iii) have a finite  norm
\begin{eqnarray} 
\| \tilde{\psi} \|_{L^2(W_{\!{\boldsymbol{I}}^*};\mathcal{H}^s_{\bk} (W_{\!\boldsymbol{I}}))}&:=& \bigg(\sum_{\bj \in \mathbb{Z}^d} (1+|\bj|^2)^s  \int_{W_{\!{\boldsymbol{I}}^*}} |\hat{\psi}_{\boldsymbol{I}}(\bk,\bj)|^2 \ind {\bk} \bigg)^{1/2} \label{L2Hsk Fourier norm}\\
&=& \bigg( \int_{W_{\!{\boldsymbol{I}}^*}} \|\tilde{\psi}(\bk;\cdot)\|^2_{\mathcal{H}^s_{\bk} (W_{\!\boldsymbol{I}})} \ind {\bk} \bigg)^{1/2}  < \infty, \nonumber 
\end{eqnarray}
where $\hat{\psi}_{\boldsymbol{I}}(\bk,\bj) = \int_{W_{\!{\boldsymbol{I}}}} \tilde{\psi}(\bk;\tilde{\bx}) e^{-i \bk \cdot \tilde{\bx}} \es \overline{e^{i 2\pi \bj \cdot \tilde{\bx}}} \ind \tilde{\bx}$. 

With the above definitions in place, we can state the following lemma \cite[Theorem 4, Lemma 7]{lechleiter2017floquet}.

\begin{lemma} \label{Bloch isomorphism}
\begin{itemize}
\item The (Floquet-)Bloch transform $\mathcal{J}_{\mathbb{R}^d}$ given by \eqref{Def Bloch transform} extends from $C_0^\infty(\mathbb{R}^d)$ to an isometric isomorphism between $L^2(\mathbb{R}^d)$ and $L^2(W_{\!{\boldsymbol{I}}^*};L^2(W_{\!\boldsymbol{I}}))$ with inverse
\begin{equation} \label{Def inverse Bloch transform}
\big(\mathcal{J}^{-1}_{\mathbb{R}^d} \tilde{\psi}   \big) (\bx): =  \frac{1}{(2\pi)^{d/2}} \int_{W_{\!{\boldsymbol{I}}^*}} \tilde{\psi}(\bk; \bx-\bj) e^{i \bk \cdot  \bj} ~\ind \bk,
\end{equation}
where $\bj \in \mathbb{Z}^d$ is the translation such that $\bx-  \bj \in W_{{\boldsymbol{I}}}$. Further for $s \in \mathbb{R}$, the (Floquet-)Bloch transform $\mathcal{J}_{\mathbb{R}^d}$ extends from $C_0^\infty(\mathbb{R}^d)$  to an isomorphism between $\mathcal{H}^s(\mathbb{R}^d)$ and $L^2(W_{\!{\boldsymbol{I}}^*}; \mathcal{H}^s_{\bk}(W_{\!\boldsymbol{I}}))$.

\item For $m \in \mathbb{N}$ and $u \in \mathcal{H}^m(\mathbb{R}^d)$, the (Floquet-)Bloch transform $\mathcal{J}_{\mathbb{R}^d} u(\bk; \tilde{\bx})$ processes weak partial derivatives with respect to $\tilde{\bx} \in W_{\!\boldsymbol{I}}$ in $L^2(W_{\!\boldsymbol{I}})$ up to order $m \in \mathbb{N}$. If $\balpha \in \mathbb{Z}^d$, $\balpha\ge \boldsymbol{0}$ with $|\balpha|\le m$, then 
\begin{eqnarray}  \label{Bloch inside derivative}
\partial^\balpha_{\tilde{\bx}} (\mathcal{J}_{\mathbb{R}^d} u) (\bk; \tilde{\bx}) = \mathcal{J}_{\mathbb{R}^d} [ \partial^\balpha_{\bx} u ] (\bk; \tilde{\bx}).
\end{eqnarray}
\end{itemize}
\end{lemma}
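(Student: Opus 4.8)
The plan is to prove each assertion first on the dense subspace $C_0^\infty(\mathbb{R}^d)$ by an explicit Parseval-type computation, and then to extend to the full spaces by density together with the boundedness established along the way. Throughout I would exploit two structural facts: that $\mathbb{R}^d$ is tiled by the integer translates $W_{\!\boldsymbol{I}}+\bj$, $\bj\in\mathbb{Z}^d$, and that the exponentials $\{e^{-i\bk\cdot\bj}\}_{\bj\in\mathbb{Z}^d}$ form an orthogonal basis of $L^2(W_{\!{\boldsymbol{I}}^*})$, since $W_{\!{\boldsymbol{I}}^*}$ is the cube of side $2\pi$ and $\int_{W_{\!{\boldsymbol{I}}^*}} e^{-i\bk\cdot(\bj-\bj')}\ind\bk = (2\pi)^d\delta_{\bj\bj'}$.

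First, for the $L^2$ isometry: since $\psi\in C_0^\infty(\mathbb{R}^d)$ makes the sum in \eqref{Def Bloch transform} finite, for each fixed $\bx$ the function $\bk\mapsto(\mathcal{J}_{\mathbb{R}^d}\psi)(\bk;\bx)$ is a trigonometric polynomial on $W_{\!{\boldsymbol{I}}^*}$ with coefficients $(2\pi)^{-d/2}\psi(\bx+\bj)$. Parseval on $W_{\!{\boldsymbol{I}}^*}$ then gives $\int_{W_{\!{\boldsymbol{I}}^*}}|(\mathcal{J}_{\mathbb{R}^d}\psi)(\bk;\bx)|^2\ind\bk = \sum_{\bj}|\psi(\bx+\bj)|^2$; integrating over $\bx\in W_{\!\boldsymbol{I}}$ and invoking the tiling collapses the right-hand side to $\|\psi\|^2_{L^2(\mathbb{R}^d)}$, which is the claimed isometry, and density of $C_0^\infty(\mathbb{R}^d)$ in $L^2(\mathbb{R}^d)$ extends it. To identify the inverse and obtain surjectivity, I would verify \eqref{Def inverse Bloch transform} directly: for $\bx\in W_{\!\boldsymbol{I}}$ (so the translate is $\bj=\boldsymbol{0}$) the orthogonality relation annihilates every term except $\bj=\boldsymbol{0}$, leaving $(\mathcal{J}^{-1}_{\mathbb{R}^d}\mathcal{J}_{\mathbb{R}^d}\psi)(\bx)=\psi(\bx)$, and $\bk$-quasiperiodicity propagates this to all of $\mathbb{R}^d$; an isometry possessing a left inverse is an isometric isomorphism.

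For the scale of Sobolev spaces, the crux is to link the Bloch coefficients to the Fourier transform. A short computation — substituting $\by=\tilde{\bx}+\bj'$ in the definition of $\hat{\psi}_{\boldsymbol{I}}(\bk,\bj)$ and using $e^{i2\pi\bj\cdot\bj'}=1$ — yields the key identity $\hat{\psi}_{\boldsymbol{I}}(\bk,\bj) = (\mathcal{F}\psi)(\bk+2\pi\bj)$. Because $\{W_{\!{\boldsymbol{I}}^*}+2\pi\bj\}_{\bj}$ tiles frequency space, the $\mathcal{H}^s(\mathbb{R}^d)$ norm rewrites as $\sum_{\bj}\int_{W_{\!{\boldsymbol{I}}^*}}(1+|\bk+2\pi\bj|^2)^s|\hat{\psi}_{\boldsymbol{I}}(\bk,\bj)|^2\ind\bk$, which must be compared with the target norm \eqref{L2Hsk Fourier norm} carrying the weight $(1+|\bj|^2)^s$. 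I expect this comparison to be the main obstacle — though it reduces to the elementary but essential observation that, as $\bk$ ranges over the bounded set $W_{\!{\boldsymbol{I}}^*}$, there are constants $0<c\le C$ with $c(1+|\bj|^2)\le 1+|\bk+2\pi\bj|^2\le C(1+|\bj|^2)$ for all $\bj\in\mathbb{Z}^d$; raising to the (possibly negative) power $s$ yields equivalence of the two weights, hence of the two norms. Thus $\mathcal{J}_{\mathbb{R}^d}$ is bounded with bounded inverse between $\mathcal{H}^s(\mathbb{R}^d)$ and $L^2(W_{\!{\boldsymbol{I}}^*};\mathcal{H}^s_{\bk}(W_{\!\boldsymbol{I}}))$, i.e. an isomorphism, isometric only when $s=0$.

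Finally, for the commutation of derivatives I would again begin on $C_0^\infty(\mathbb{R}^d)$, where the finite sum \eqref{Def Bloch transform} may be differentiated term by term in $\tilde{\bx}$; since translation commutes with $\partial^\balpha$ and the factor $e^{-i\bk\cdot\bj}$ is independent of $\tilde{\bx}$, this gives \eqref{Bloch inside derivative} pointwise for $|\balpha|\le m$. Both sides are continuous from $\mathcal{H}^m(\mathbb{R}^d)$ into $L^2(W_{\!{\boldsymbol{I}}^*};L^2(W_{\!\boldsymbol{I}}))$ — the left via the $s=m$ isomorphism followed by the (uniformly in $\bk$) bounded map $\partial^\balpha_{\tilde{\bx}}:\mathcal{H}^m_{\bk}(W_{\!\boldsymbol{I}})\to L^2(W_{\!\boldsymbol{I}})$, the right via boundedness of $\partial^\balpha_{\bx}:\mathcal{H}^m(\mathbb{R}^d)\to\mathcal{H}^{m-|\balpha|}(\mathbb{R}^d)$ composed with $\mathcal{J}_{\mathbb{R}^d}$ — so density of $C_0^\infty(\mathbb{R}^d)$ in $\mathcal{H}^m(\mathbb{R}^d)$ upgrades the identity to every $u\in\mathcal{H}^m(\mathbb{R}^d)$, which simultaneously shows that $(\mathcal{J}_{\mathbb{R}^d}u)(\bk;\cdot)$ possesses weak $\tilde{\bx}$-derivatives in $L^2(W_{\!\boldsymbol{I}})$ up to order $m$.
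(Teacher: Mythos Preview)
The paper does not prove this lemma at all; it simply quotes the result from \cite[Theorem~4, Lemma~7]{lechleiter2017floquet} without argument. Your proposal supplies precisely the standard proof that one finds in that reference (and in Kuchment's survey): Parseval in $\bk$ over $W_{\!{\boldsymbol{I}}^*}$ for the $L^2$ isometry, the identity $\hat{\psi}_{\boldsymbol{I}}(\bk,\bj)=(\mathcal{F}\psi)(\bk+2\pi\bj)$ together with the weight equivalence $(1+|\bk+2\pi\bj|^2)^s\approx(1+|\bj|^2)^s$ uniformly in $\bk\in W_{\!{\boldsymbol{I}}^*}$ for the Sobolev isomorphism, and density plus term-by-term differentiation for the commutation with $\partial^\balpha$. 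All three steps are correct as outlined, so there is nothing to compare against and nothing to repair.
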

%%%%%%
\subsection{Orthonormal basis and dispersion relationship}
For any $\bk \in W_{\!\boldsymbol{I}^*}$, let us introduce the second-order elliptic operator on $W_{\!\boldsymbol{I}}$
\begin{eqnarray*}
\mathcal{A}(\bk)  =-\frac{1}{\rho}\big(\nabla+i \bk ) \!\cdot\!\big[G (\nabla + i\bk) \big].
\end{eqnarray*}
From the theory of compact self-adjoint operators \cite[pp 341]{lions2011asymptotic} and \cite{wilcox1978theory} { (under the assumption that $G$ and $\rho$ are smooth)}, there exists a discrete set of eigenvalues $\{\omega^2_m(\bk)\}_{m=0}^\infty$ and a set of eigenfunctions { (this set of eigenfunctions is not arbitrarily chosen, it is chosen according to \cite{wilcox1978theory})} $\{\phi_m(\bk;\cdot)\}_{m=0}^\infty$ such that
\begin{eqnarray} \label{eigenfunction}
\mathcal{A}(\bk) \phi_m(\bk;\cdot) = \omega^2_m(\bk)   \phi_m(\bk;\cdot),
\end{eqnarray}
where $\phi_m(\bk;\cdot) \in \mathcal{H}^1_{\boldsymbol{0}}(W_{\!\boldsymbol{I}})$;
\begin{eqnarray*}
\langle \rho \phi_m(\bk;\cdot),\phi_n(\bk;\cdot)  \rangle := \int_{ W_{\!\boldsymbol{I}}} \rho(\tilde{\bx}) \phi_m(\bk; \tilde{\bx}) \es \overline{\phi_n(\bk; \tilde{\bx})} \es \ind \tilde{\bx} \:=\: \delta_{mn},
\end{eqnarray*}
 and $\{\phi_m(\bk;\tilde{\bx})\}_{m=0}^\infty$ is a complete orthonormal basis in $L^2(W_{\!\boldsymbol{I}})$. Hereon, we use the notation $\langle \cdot \rangle$ to denote the average of a (possibly tensor-valued) function over $W_{\!\boldsymbol{I}}$. {  Moreover each eigenvalue and each eigenfunction are holomorphic in $\bk$ except a null set where the multiplicity of the eigenvalue change \cite{wilcox1978theory,lions2011asymptotic}.}
 
{  It is then verified for all $\bj \in\mathbb{Z}^d$ that
\begin{eqnarray*}
\mathcal{A}(\bk+2\pi \bj) \phi_m(\bk+2\pi \bj;\cdot) = \omega^2_m(\bk+2\pi \bj)   \phi_m(\bk+2\pi \bj;\cdot),
\end{eqnarray*}
with $\omega_m(\bk+2\pi \bj) := \omega_m(\bk)$, and 
 \begin{eqnarray*}
\phi_m(\bk + 2\pi \bj;\tilde{\bx}) \,:=\, e^{i (-\tilde{\bx}) \cdot  2\pi\bj} \es \phi_m(\bk;\tilde{\bx}), \quad \forall \tilde{\bx} \in W_{\!\boldsymbol{I}}, ~ \bj \in\mathbb{Z}^d.
\end{eqnarray*}
It follows immediately that $e^{i\bk \cdot \bx}\phi_m(\bk;\bx)$ is (i) $2\pi\boldsymbol{I}$-periodic in the $\bk$ variable; (ii) $\bk$-quasiperiodic with respect to $\boldsymbol{I}$ in the $\bx$ variable. 
}

By analogy to~\eqref{Hsk Fourier norm} we find that for all $\bk \!\in\! W_{\!\boldsymbol{I}^*}$, an equivalent norm for $\mathcal{H}^s_{\bk} (W_{\!\boldsymbol{I}})$ can be defined as
\begin{eqnarray}
\|\nes| \tilde{\psi} |\nes\|_{\mathcal{H}^s_{\bk} (W_{\!\boldsymbol{I}})}&:=& \bigg( \sum_{m=0}^\infty (1+\omega_m^2)^s \es |\langle \rho  \tilde{\psi} , e^{i\bk \cdot } \phi_m  \rangle|^2 \bigg)^{1/2}  \\
&=& \Big( \| \tilde{\psi} e^{-i\bk \cdot } \|^2_{L^2(W_{\!\boldsymbol{I}}; \rho)}+  \delta_{s1}\| (\nabla+i\bk) (\tilde{\psi} e^{-i\bk  \cdot}) \|^2_{L^2(W_{\!\boldsymbol{I}}; G)} \Big)^{1/2}  \nonumber \\
&=& \Big( \| \tilde{\psi}  \|^2_{L^2(W_{\!\boldsymbol{I}}; \rho)} +  \delta_{s1}\| \nabla \tilde{\psi}   \|^2_{L^2(W_{\!\boldsymbol{I}}; G)} \Big)^{1/2} < \infty, \quad s=0,1, \nonumber
\end{eqnarray}
where $L^2(W_{\!\boldsymbol{I}};\rho)$ denotes the $\rho$-weighted $L^2$-space in $W_{\!\boldsymbol{I}}$ with norm $\big(\int_{W_{\!\boldsymbol{I}}} \rho |\cdot|^2 \big)^{\frac{1}{2}}$, and the same notation holds for $L^2(W_{\!\boldsymbol{I}};G)$. As a result, $L^2(W_{\!{\boldsymbol{I}^*}};\mathcal{H}^s_{\bk} (W_{\!\boldsymbol{I}}))$ (with $s=0,1$)  has equivalent norm given by
\begin{eqnarray} 
\|\nes| \tilde{\psi} |\nes\|_{L^2(W_{\!{\boldsymbol{I}^*}};\mathcal{H}^s_{\bk} (W_{\!\boldsymbol{I}}))}&:=& \bigg( \sum_{m=0}^\infty (1+\omega_m^2)^s \int_{W_{\!{\boldsymbol{I}^*}}} |\langle \rho  \tilde{\psi}(\bk;\cdot), e^{i\bk  \cdot} \phi_m(\bk;\cdot) \rangle|^2 \ind \bk \bigg)^{1/2} ~~~ \label{Convergence Bloch modes}\\
&=& \bigg( \int_{W_{\!{\boldsymbol{I}^*}}} \|\nes|\tilde{\psi}(\bk;\cdot)|\nes\|^2_{\mathcal{H}^s_{\bk} (W_{\!\boldsymbol{I}})} \ind {\bk} \bigg)^{1/2}  < \infty, \quad s=0,1. \nonumber
\end{eqnarray}
The above $\|\nes|\cdot |\nes\|$-norm is equivalent to that given by~\eqref{L2Hsk Fourier norm} and allows us to write down the series expansion of $\psi \in L^2(W_{\!{\boldsymbol{I}^*}};\mathcal{H}^s_{\bk} (W_{\!\boldsymbol{I}}))$ as  
\begin{eqnarray} \label{Expansion Bloch modes}
\tilde{\psi}(\bk; \tilde{\bx})   = \sum_{m=0}^\infty  \langle \rho  \tilde{\psi}(\bk;\cdot), e^{i\bk  \cdot} \phi_m(\bk;\cdot) \rangle e^{i \bk \cdot \tilde{\bx}}  \phi_m(\bk;\tilde{\bx}),
\end{eqnarray}
where the convergence is in the $\|\nes|\cdot |\nes\|$-norm.

For future reference we introduce the real Bloch variety, i.e. the dispersion relationship \cite{kuchment2016overview}, as the set 
\begin{eqnarray*}
\cup_{m=0}^\infty\{ (\bk, \omega_m(\bk)); \, \bk \in W_{\!{\boldsymbol{I}^*}} \},
\end{eqnarray*}
whose $p$th branch is given by 
\begin{eqnarray*}
\{ (\bk, \omega_p(\bk)); \, \bk \in W_{\!{\boldsymbol{I}^*}} \}, \quad p=0,1,2,\ldots.
\end{eqnarray*}
In this setting, {the union of all band gaps can be conveniently written as}  
\begin{eqnarray} \label{bg1}
{\{\omega^2 \!\in \mathbb{R}\!:\,  \omega^2 \neq \omega_m^2(\bk),~  \bk  \in W_{\!\boldsymbol{I}^*}, ~m=0,1,2,\ldots\}.}
\end{eqnarray}

Finally we state the Bloch expansion theorem \cite{lions2011asymptotic} (see also \cite{wilcox1978theory}), which is a direct consequence of the representation \eqref{Expansion Bloch modes} and Lemma \ref{Bloch isomorphism}.

\begin{lemma} \label{Lemma Lemma Bloch expansion}
Let $\psi \in \mathcal{H}^s(\mathbb{R}^d)$ with $s=0,1$. Then
\begin{eqnarray}
\psi(\bx) &=&  \frac{1}{(2\pi)^{d/2}}  \int_{W_{\!{\boldsymbol{I}^*}}} \sum_{m=0}^\infty 
\langle \rho [\mathcal{J}_{\mathbb{R}^d} \psi](\bk;\cdot), e^{i\bk \cdot  \cdot} \phi_m(\bk;\cdot) \rangle
e^{i\bk\cdot   \bx} \phi_m(\bk;\bx) \ind \bk, \label{Lemma Bloch expansion}
\end{eqnarray}
and the Parseval's identity holds
\begin{eqnarray}
\|\psi\|_{\mathcal{H}^s(\mathbb{R}^d)}^2 = \| \mathcal{J}_{\mathbb{R}^d} \psi \|^2_{L^2(W_{\!{\boldsymbol{I}^*}};\mathcal{H}^s_{\bk} (W_{\!\boldsymbol{I}}))} \approx \|\nes| \mathcal{J}_{\mathbb{R}^d} \psi |\nes\|^2_{L^2(W_{\!{\boldsymbol{I}^*}};\mathcal{H}^s_{\bk} (W_{\!\boldsymbol{I}}))}, \label{Parseval's identity}
\end{eqnarray}
where $``\approx"$ means that there exists positive constants $c_1$ and $c_2$ such that 
\begin{eqnarray*}
c_1  \|\nes| \mathcal{J}_{\mathbb{R}^d} \psi |\nes\|^2_{L^2(W_{\!{\boldsymbol{I}^*}};\mathcal{H}^s_{\bk} (W_{\!\boldsymbol{I}}))} \leqslant \|\psi\|_{\mathcal{H}^s(\mathbb{R}^d)}^2 \leqslant c_2 \|\nes| \mathcal{J}_{\mathbb{R}^d} \psi |\nes\|^2_{L^2(W_{\!{\boldsymbol{I}^*}};\mathcal{H}^s_{\bk} (W_{\!\boldsymbol{I}}))}.
\end{eqnarray*}
\end{lemma}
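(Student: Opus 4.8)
The plan is to derive both assertions directly from the modal expansion \eqref{Expansion Bloch modes} and the mapping properties collected in Lemma \ref{Bloch isomorphism}, without introducing new machinery. First I would set $\tilde{\psi} := \mathcal{J}_{\mathbb{R}^d}\psi$ and invoke the second statement in the first bullet of Lemma \ref{Bloch isomorphism}, which for $s=0,1$ guarantees that $\tilde{\psi} \in L^2(W_{\!{\boldsymbol{I}^*}};\mathcal{H}^s_{\bk}(W_{\!\boldsymbol{I}}))$. This membership is precisely the condition under which the series \eqref{Expansion Bloch modes} converges in the $\|\nes|\cdot|\nes\|$-norm, so I may represent $\tilde{\psi}(\bk;\cdot)$ as the sum over the Bloch modes $e^{i\bk\cdot}\phi_m(\bk;\cdot)$ with coefficients $\langle \rho\,\tilde{\psi}(\bk;\cdot), e^{i\bk\cdot}\phi_m(\bk;\cdot)\rangle$.

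Next I would recover $\psi$ by applying the inverse transform \eqref{Def inverse Bloch transform}. The key simplification is that for a $\bk$-quasiperiodic integrand the translation $\bj$ in \eqref{Def inverse Bloch transform} is absorbed: since $\tilde{\psi}(\bk;\cdot)$ is $\bk$-quasiperiodic in its second argument, one has $\tilde{\psi}(\bk;\bx-\bj)\,e^{i\bk\cdot\bj}=\tilde{\psi}(\bk;\bx)$ for the $\bj$ with $\bx-\bj\in W_{\!\boldsymbol{I}}$, so that $\mathcal{J}^{-1}_{\mathbb{R}^d}$ reduces to $(2\pi)^{-d/2}\int_{W_{\!{\boldsymbol{I}^*}}}\tilde{\psi}(\bk;\bx)\,\ind\bk$ under the quasiperiodic extension in $\bx$. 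Because each factor $e^{i\bk\cdot\bx}\phi_m(\bk;\bx)$ is itself the $\bk$-quasiperiodic extension of the corresponding mode (as recorded in the text via $\phi_m(\bk+2\pi\bj;\cdot)$ and the quasiperiodicity of $e^{i\bk\cdot\bx}\phi_m$), inserting the expansion of $\tilde{\psi}$ into this reduced inverse and exchanging the integral, the sum, and the continuous operator $\mathcal{J}^{-1}_{\mathbb{R}^d}$ yields exactly \eqref{Lemma Bloch expansion}. The interchange is legitimate because the partial sums of \eqref{Expansion Bloch modes} converge in $L^2(W_{\!{\boldsymbol{I}^*}};\mathcal{H}^s_{\bk}(W_{\!\boldsymbol{I}}))$ while $\mathcal{J}^{-1}_{\mathbb{R}^d}$ is bounded from that space onto $\mathcal{H}^s(\mathbb{R}^d)$, so the identity holds in the $\mathcal{H}^s(\mathbb{R}^d)$ sense.

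For the Parseval relation \eqref{Parseval's identity}, the first equality $\|\psi\|^2_{\mathcal{H}^s(\mathbb{R}^d)}=\|\mathcal{J}_{\mathbb{R}^d}\psi\|^2_{L^2(W_{\!{\boldsymbol{I}^*}};\mathcal{H}^s_{\bk}(W_{\!\boldsymbol{I}}))}$ is the (isometric for $s=0$) isomorphism of Lemma \ref{Bloch isomorphism}, the right-hand norm being the Fourier norm \eqref{L2Hsk Fourier norm}. The subsequent $``\approx"$ is then the equivalence between \eqref{L2Hsk Fourier norm} and the modal norm \eqref{Convergence Bloch modes} already asserted below \eqref{Convergence Bloch modes}, whose constants $c_1,c_2$ arise from comparing $(1+|\bj|^2)^s$ with $(1+\omega_m^2(\bk))^s$ uniformly over $\bk\in W_{\!{\boldsymbol{I}^*}}$.

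The only genuinely delicate point, and the step I would treat most carefully, is the bookkeeping in the inverse transform: one must check that the quasiperiodic extension of $\tilde{\psi}$ is compatible with the mode-wise extension $e^{i\bk\cdot\bx}\phi_m(\bk;\bx)$ so that the translation $\bj$ cancels \emph{simultaneously} for every term of the series, and that this cancellation persists in the $L^2(W_{\!{\boldsymbol{I}^*}};\mathcal{H}^s_{\bk}(W_{\!\boldsymbol{I}}))$-limit rather than merely for finite partial sums. Everything else is a direct transcription of \eqref{Expansion Bloch modes} and Lemma \ref{Bloch isomorphism}.
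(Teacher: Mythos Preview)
Your proposal is correct and follows precisely the approach the paper indicates: the paper does not give a detailed proof but states the lemma as ``a direct consequence of the representation \eqref{Expansion Bloch modes} and Lemma \ref{Bloch isomorphism}'' with references to \cite{lions2011asymptotic,wilcox1978theory}, and your argument unpacks exactly that consequence. The only minor quibble is that for $s=1$ Lemma \ref{Bloch isomorphism} asserts an isomorphism rather than an isometry, so the first ``equality'' in \eqref{Parseval's identity} should itself be read as a norm equivalence; but this is a feature of the paper's phrasing, not a flaw in your reasoning.
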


%%%%%%%%%%%%%%%%%%%%%%%%%%%%
\section{Formulation of the problem and Bloch-wave solution} \label{Bloch expansion}

{As examined earlier we pursue a low-wavenumber, second-order homogenized solution~$U_2$ of~\eqref{PDE fast} that, when ${ \Omega_{\epsilon}}$ resides inside a band gap, converges to~${  U_{\epsilon} }$ according to~\eqref{aux1}. Since~\eqref{bg1} specifies the union of all band gaps for an $\bI$-periodic medium, from the scaling argument we find that for an $\eps\bI$-periodic domain featured in~\eqref{PDE fast}, we have 
\begin{eqnarray} \label{bg2}
{ \Omega_{\epsilon}}^2 \neq \eps^{-2}\omega_m^2(\bk), \quad \bk\in W_{\!\boldsymbol{I}^*}, ~ m=0,1,2,\ldots
\end{eqnarray} 
as an explicit condition that~${ \Omega_{\epsilon}}$ resides inside a band gap { (whereby no radiation condition is required in our context)}. In the context of the low-wavenumber assumption, we further restrict the analysis by letting  
\begin{eqnarray} \label{bg3}
{ \Omega_{\epsilon}}^2 = \eps^{-2}\omega^2_p(\boldsymbol{0}) + \sigma\hat{\Omega}^2, \quad \sigma=\pm 1
\end{eqnarray} 
where (i) $\omega^2_p(\boldsymbol{0})$ is a simple eigenvalue; (ii) $\hat{\Omega}=O(1)$, and (iii)  $\sigma$ is either $1$ or $-1$ depending on~\eqref{bg2}. In this setting, we can clearly distinguish between the low frequency case ($p=0$, $\omega_0(\boldsymbol{0})=0$, ${ \Omega_{\epsilon}}=O(1)$) and its high-frequency counterpart ($p\geqslant 1$, ${ \Omega^2_{\epsilon}}=O(\eps^{-2})$).} { Once again, we consider the case when the driven frequency   is close to the edge of the band gap, and we refer to~\cite{kuchment2012green,kha2015green} for similar problems and applications near internal edges of the spectra of periodic elliptic operators.}
 
Using the change of variables 
\begin{equation}\label{aux2}
\bx=\eps^{-1}\br, \quad \omega=\eps \es { \Omega_{\epsilon}}, \quad u(\bx)={  U_{\epsilon} }(\br)
\end{equation}
we can conveniently rewrite~\eqref{PDE fast} as 
\begin{equation}\label{PDE}
-\nabla\!\cdot\!\big(G(\bx)\nabla u \big) - \omega^2\rho(\bx)\hh u ~=~ \eps^2  { f_{\epsilon}}(\eps \bx) \qquad\text{in~~}\mathbb{R}^d, 
\end{equation}
where 
\begin{equation} \label{drf1}
\omega^2 = \omega_p^2(\boldsymbol{0}) + \eps^2 \sigma\es \hat\Omega^2, \quad \hat{\Omega}=O(1)
\end{equation} 
due to~\eqref{bg3}. 
In what follows, we first use the variational formulation of~\eqref{PDE} to obtain $u(\bx)$, and then recover ${  U_{\epsilon} }(\br)$ via~\eqref{aux2}. { For the best presentation and without the danger of confusion, we write $u$ and $\omega$ and simply remind their dependence on $\epsilon$ throughout the context.}

\subsection{Variational formulation}
We begin with solutions $u\in C_0^\infty(\mathbb{R}^d)$. This allows us to apply the (Floquet-)Bloch transform to~\eqref{PDE}, resulting in 
\begin{equation*} 
\mathcal{J}_{\mathbb{R}^d} \big[ -\nabla\!\cdot\!\big(G\nabla u\big) - \omega^2\rho\hh u \big]  (\bk; \wtx)~=~ \eps^2  \mathcal{J}_{\mathbb{R}^d} \big[{ f_{\epsilon}}(\eps \cdot) \big]  (\bk; \wtx) \qquad\text{in~~} W_{\!\boldsymbol{I}}.
\end{equation*}
With the aid of~\eqref{Periodic q commute} and~\eqref{Bloch inside derivative}, we obtain 
\begin{equation}  \label{WLambda distributional PDE}
 -\nabla\!\cdot\!\big(G(\wtx)\nabla [\mathcal{J}_{\mathbb{R}^d} u   (\bk; \wtx)]\big) - \omega^2\rho(\wtx)\hh [\mathcal{J}_{\mathbb{R}^d} u   (\bk; \wtx)]~=~ \eps^2  \mathcal{J}_{\mathbb{R}^d} \big[{ f_{\epsilon}}(\eps \cdot) \big] (\bk; \wtx)  \quad \text{in } W_{\!\boldsymbol{I}}.
\end{equation}
Let $\psi(\bk;\wtx) \in L^2(W_{\!{\boldsymbol{I}^*}};\mathcal{H}^1_{\bk} (W_{\!\boldsymbol{I}}))$; then multiplying the above equation by $\overline{\psi}$, integrating with respect to $\wtx$ over $W_{\!\boldsymbol{I}}$, and integrating by parts we obtain
\begin{multline} \label{variational Bloch 1}
\int_{W_{\!\boldsymbol{I}}}G(\wtx)\nabla_{\wtx} [\mathcal{J}_{\mathbb{R}^d} u   (\bk; \wtx)] \overline{\nabla_{\wtx} \psi(\bk;\wtx)} \ind \wtx  - \omega^2 \int_{W_{\!\boldsymbol{I}}}\rho(\wtx)\hh [\mathcal{J}_{\mathbb{R}^d} u   (\bk; \wtx)] \overline{  \psi(\bk;\wtx)} \ind \wtx \\
~=~ \eps^2  \int_{W_{\!\boldsymbol{I}}} \mathcal{J}_{\mathbb{R}^d} \big[{ f_{\epsilon}}(\eps \cdot) \big] (\bk; \wtx) \overline{  \psi(\bk;\wtx)} \ind \wtx.
\end{multline}
On denoting  $\wtu = \mathcal{J}_{\mathbb{R}^d} u$, integration of the above equation w.r.t. $\bk$ over $W_{\!{\boldsymbol{I}^*}}$ yields 
\begin{multline} \label{variational Bloch}
\int_{W_{\!{\boldsymbol{I}^*}}}\int_{W_{\!\boldsymbol{I}}}G(\wtx)\nabla_{\wtx} \wtu   (\bk; \wtx) \overline{\nabla_{\wtx} \psi(\bk;\wtx)} \ind \wtx \ind \bk - \omega^2 \int_{W_{\!{\boldsymbol{I}^*}}}\int_{W_{\!\boldsymbol{I}}}\rho(\wtx)\hh \wtu   (\bk; \wtx) \overline{  \psi(\bk;\wtx)} \ind \wtx \ind \bk   \\
~=~ \eps^2  \int_{W_{\!{\boldsymbol{I}^*}}} \int_{W_{\!\boldsymbol{I}}} \mathcal{J}_{\mathbb{R}^d} \big[{ f_{\epsilon}}(\eps \cdot) \big] (\bk; \wtx) \overline{  \psi(\bk;\wtx)} \ind \wtx \ind \bk.  
\end{multline}
Now we are ready to prove the following theorem.
\begin{theorem}\label{variational Bloch Thm}
If $u \in \mathcal{H}^1(\mathbb{R}^d)$ is a solution to \eqref{PDE}, then $\wtu(\bk; \wtx)=\mathcal{J}_{\mathbb{R}^d} u   (\bk; \wtx) \in L^2(W_{\!{\boldsymbol{I}^*}};\mathcal{H}^1_{\bk} (W_{\!\boldsymbol{I}}))$ solves \eqref{variational Bloch}. Conversely, if $\wtu(\bk;\wtx) \in L^2(W_{\!{\boldsymbol{I}^*}};\mathcal{H}^1_{\bk} (W_{\!\boldsymbol{I}}))$ is the solution to \eqref{variational Bloch}, then $u(\bx)=\big(\mathcal{J}^{-1}_{\mathbb{R}^d} \wtu   \big) (\bx) \in \mathcal{H}^1(\mathbb{R}^d)$ solves \eqref{PDE}.
\end{theorem}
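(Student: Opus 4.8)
The plan is to show that the sesquilinear forms appearing in \eqref{PDE} and in \eqref{variational Bloch} are carried onto one another by $\mathcal{J}_{\mathbb{R}^d}$, so that the two problems are merely two encodings of a single identity; the two implications then follow by reading this correspondence in opposite directions. First I would record what ``$u\in\mathcal{H}^1(\mathbb{R}^d)$ solves \eqref{PDE}'' means, namely the weak formulation
\begin{equation*}
\int_{\mathbb{R}^d}\! G\hh\nabla u\cdot\overline{\nabla\varphi}\ind\bx \,-\, \omega^2\!\int_{\mathbb{R}^d}\!\rho\hh u\hh\overline{\varphi}\ind\bx \,=\, \eps^2\!\int_{\mathbb{R}^d}\! f_\epsilon(\eps\hh\cdot)\hh\overline{\varphi}\ind\bx, \qquad \forall\hh\varphi\in\mathcal{H}^1(\mathbb{R}^d),
\end{equation*}
obtained from \eqref{PDE} by testing against $C_0^\infty(\mathbb{R}^d)$ and extending by density. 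By Lemma \ref{Bloch isomorphism} the transform $\mathcal{J}_{\mathbb{R}^d}$ is an isomorphism of $\mathcal{H}^1(\mathbb{R}^d)$ onto $L^2(W_{\!{\boldsymbol{I}^*}};\mathcal{H}^1_{\bk}(W_{\!\boldsymbol{I}}))$, so $\wtu:=\mathcal{J}_{\mathbb{R}^d}u$ already lies in the asserted space, and every admissible test field $\psi\in L^2(W_{\!{\boldsymbol{I}^*}};\mathcal{H}^1_{\bk}(W_{\!\boldsymbol{I}}))$ is of the form $\psi=\mathcal{J}_{\mathbb{R}^d}\varphi$ for a unique $\varphi\in\mathcal{H}^1(\mathbb{R}^d)$.

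The core step is to transform each of the three integrals above term-by-term. For the mass term I would write $\mathcal{J}_{\mathbb{R}^d}(\rho\hh u)=\rho\hh\wtu$ by the commutation property \eqref{Periodic q commute} (valid since $\rho$ is $\boldsymbol{I}$-periodic) and then invoke the $L^2$-isometry of $\mathcal{J}_{\mathbb{R}^d}$ from Lemma \ref{Bloch isomorphism}, whose polarization preserves $L^2$ inner products, to get $\int_{\mathbb{R}^d}\rho\hh u\hh\overline{\varphi} = \int_{W_{\!{\boldsymbol{I}^*}}}\!\int_{W_{\!\boldsymbol{I}}}\rho\hh\wtu\hh\overline{\psi}$. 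For the stiffness term I would combine the derivative rule \eqref{Bloch inside derivative}, giving $\mathcal{J}_{\mathbb{R}^d}(\nabla u)=\nabla_{\wtx}\wtu$ componentwise, with \eqref{Periodic q commute} applied to the periodic $G$ and again the isometry, yielding $\int_{\mathbb{R}^d}G\hh\nabla u\cdot\overline{\nabla\varphi} = \int_{W_{\!{\boldsymbol{I}^*}}}\!\int_{W_{\!\boldsymbol{I}}}G\hh\nabla_{\wtx}\wtu\cdot\overline{\nabla_{\wtx}\psi}$; the source integral transforms in the same way. Substituting $\psi=\mathcal{J}_{\mathbb{R}^d}\varphi$ turns the weak formulation precisely into \eqref{variational Bloch}, proving the forward implication. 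The converse is the mirror image: starting from $\wtu$ solving \eqref{variational Bloch}, setting $u:=\mathcal{J}^{-1}_{\mathbb{R}^d}\wtu\in\mathcal{H}^1(\mathbb{R}^d)$, and inserting the admissible test field $\psi=\mathcal{J}_{\mathbb{R}^d}\varphi$ for arbitrary $\varphi\in\mathcal{H}^1(\mathbb{R}^d)$, the very same three identities read backwards recover the weak form of \eqref{PDE}, whence $u$ solves \eqref{PDE} distributionally.

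The step I expect to demand the most care is the rigorous justification of the term-by-term transformation under the mere $\mathcal{H}^1$-regularity of $u$: the derivation of \eqref{variational Bloch} in the text was carried out only for $u\in C_0^\infty(\mathbb{R}^d)$, so I must confirm that \eqref{Bloch inside derivative} and \eqref{Periodic q commute} persist after extension by density, that $\mathcal{J}_{\mathbb{R}^d}$ commutes with multiplication by the periodic coefficient $G$ acting componentwise on the vector $\nabla u$, and that the polarization of the isometry underlying \eqref{Parseval's identity} legitimately moves each scalar- and vector-valued $L^2$ pairing between $\mathbb{R}^d$ and $W_{\!{\boldsymbol{I}^*}}\!\times\! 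W_{\!\boldsymbol{I}}$. A secondary point requiring attention is that the right-hand side of \eqref{PDE} must define a bounded antilinear functional on $\mathcal{H}^1(\mathbb{R}^d)$ so that both the density extension of the test space and the isometric transfer of the source integral are licit; this relies on the admissibility of the source class introduced later in Section \ref{Bloch expansion}.
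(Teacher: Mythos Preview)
Your proposal is correct and, for the forward direction, essentially coincides with the paper's argument: both amount to extending the identity between the $\mathbb{R}^d$-form and the $W_{\!\boldsymbol{I}^*}\times W_{\!\boldsymbol{I}}$-form from $C_0^\infty$ to $\mathcal{H}^1$ by density and the isomorphism of Lemma~\ref{Bloch isomorphism}. Your framing via polarization of the $L^2$-isometry is a clean way to package the term-by-term transfer that the paper carries out by applying $\mathcal{J}_{\mathbb{R}^d}$ to the strong equation and then integrating by parts.

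For the converse there is a genuine, if minor, difference. You argue symmetrically: insert $\psi=\mathcal{J}_{\mathbb{R}^d}\varphi$ for arbitrary $\varphi\in\mathcal{H}^1(\mathbb{R}^d)$ into \eqref{variational Bloch} and read the three identities backwards to recover the weak form of \eqref{PDE}. The paper instead integrates \eqref{variational Bloch} by parts to obtain the distributional equation in $L^2(W_{\!\boldsymbol{I}^*};\mathcal{H}^{-1}_{\bk}(W_{\!\boldsymbol{I}}))$, invokes elliptic regularity (using smoothness of $G$ and $f_\epsilon\in L^2$) to upgrade $\wtu$ to $L^2(W_{\!\boldsymbol{I}^*};\mathcal{H}^2_{\bk}(W_{\!\boldsymbol{I}}))$, and only then applies $\mathcal{J}_{\mathbb{R}^d}^{-1}$ together with \eqref{Bloch inside derivative} to obtain \eqref{PDE} as an identity in $L^2(\mathbb{R}^d)$. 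Your route is shorter and needs no regularity bootstrap, delivering exactly what the theorem asserts; the paper's route buys the extra information $u\in\mathcal{H}^2(\mathbb{R}^d)$ and the strong-form equation, at the cost of invoking smoothness of $G$. Either argument suffices for the statement as written.
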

\begin{proof}
From the above arguments we have shown that, for any $u\in C_0^\infty(\mathbb{R}^d)$ {solving~\eqref{PDE}} and $\psi(\bk;\wtx) \in L^2(W_{\!{\boldsymbol{I}^*}};\mathcal{H}^s_{\bk} (W_{\!\boldsymbol{I}}))$, equation \eqref{variational Bloch} holds. Since $C_0^\infty(\mathbb{R}^d)$ is dense in $\mathcal{H}^1(\mathbb{R}^d)$, and the (Floquet-)Bloch transform is an isomorphism between $\mathcal{H}^1(\mathbb{R}^d)$ and $L^2(W_{\!{\boldsymbol{I}^*}};\mathcal{H}^1_{\bk} (W_{\!\boldsymbol{I}}))$, therefore the variational formulation \eqref{variational Bloch} holds.

Conversely, if $\wtu(\bk;\wtx) \in L^2(W_{\!{\boldsymbol{I}^*}};\mathcal{H}^1_{\bk} (W_{\!\boldsymbol{I}}))$ is the solution to \eqref{variational Bloch}, then 
\begin{eqnarray*} 
\int_{W_{\!{\boldsymbol{I}^*}}}\int_{W_{\!\boldsymbol{I}}} \Big( \nabla_{\wtx}\!\cdot\!    \big(G\nabla_{\wtx} \wtu(\bk; \wtx)\big) + \omega^2\rho\hh \wtu    (\bk; \wtx) + \eps^2  \mathcal{J}_{\mathbb{R}^d} \big[{ f_{\epsilon}}(\eps \cdot) \big]  (\bk; \wtx) \Big) \overline{  \psi(\bk;\wtx)} \ind \wtx \ind \bk=0,
\end{eqnarray*}
which yields 
\begin{eqnarray*}
\nabla_{\wtx}\!\cdot\!    \big(G\nabla_{\wtx} \wtu(\bk; \wtx)\big) + \omega^2\rho\hh \wtu    (\bk; \wtx) + \eps^2  \mathcal{J}_{\mathbb{R}^d} \big[{ f_{\epsilon}}(\eps \cdot) \big]  (\bk; \wtx)=0
\end{eqnarray*}
in $L^2(W_{\!{\boldsymbol{I}^*}};\mathcal{H}^{-1}_{\bk} (W_{\!\boldsymbol{I}}))$. Since ${ f_{\epsilon}} \in L^2(\mathbb{R}^d)$, we have that $\mathcal{J}_{\mathbb{R}^d} { f_{\epsilon}} \in L^2(W_{\!{\boldsymbol{I}^*}};\mathcal{H}^{0}_{\bk} (W_{\!\boldsymbol{I}}))$. Since~$G$ is smooth, then by standard regularity, $\wtu \in L^2(W_{\!{\boldsymbol{I}^*}};\mathcal{H}^{2}_{\bk} (W_{\!\boldsymbol{I}}))$. Let  $u(\bx)=\big(\mathcal{J}^{-1}_{\mathbb{R}^d} \wtu   \big) (\bx)$. From Lemma \ref{Bloch isomorphism}, $u \in \mathcal{H}^2(\mathbb{R}^d)$. On applying the inverse (Floquet-)Bloch transform $\mathcal{J}^{-1}_{\mathbb{R}^d}$ to the above equation and making use of~\eqref{Bloch inside derivative}, we find that  
\begin{eqnarray*}
 -  \nabla\!\cdot\!   \big(G\nabla u\big)  (\bx)- \omega^2\rho\hh u  (\bx)   =\eps^2   \big[{ f_{\epsilon}}(\eps \cdot) \big]  (\bx) 
\end{eqnarray*}
in $L^2(\mathbb{R}^d)$, whereby $u(\bx)=\big(\mathcal{J}^{-1}_{\mathbb{R}^d} \wtu   \big) (\bx) \in \mathcal{H}^1(\mathbb{R}^d)$ solves \eqref{PDE}. 
\end{proof}

Since $\omega$  belongs to a band gap, then there exists at most one solution to \eqref{PDE}. Further  for each $\bk$, the variational problem in $W_{\!\boldsymbol{I}}$ \eqref{variational Bloch 1} is uniquely solvable (see for instance \cite{lions2011asymptotic}). As a result, we obtain the following claim. 
\begin{corollary}
Assume that $\omega$ belongs to a band gap. Then for any ${ f_{\epsilon}} \in L^2(\mathbb{R}^d)$, there exists a unique solution $u \in \mathcal{H}^1(\mathbb{R}^d)$ to \eqref{PDE}.
\end{corollary}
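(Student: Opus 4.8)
The plan is to reduce the global problem on $\mathbb{R}^d$ to a $\bk$-indexed family of cell problems via the equivalence in Theorem~\ref{variational Bloch Thm}, solve each cell problem explicitly through the spectral decomposition \eqref{Expansion Bloch modes}, and then verify that the resulting family assembles into an admissible element of $L^2(W_{\!{\boldsymbol{I}^*}};\mathcal{H}^1_{\bk}(W_{\!\boldsymbol{I}}))$. By Theorem~\ref{variational Bloch Thm} it suffices to produce a unique $\wtu \in L^2(W_{\!{\boldsymbol{I}^*}};\mathcal{H}^1_{\bk}(W_{\!\boldsymbol{I}}))$ solving \eqref{variational Bloch}; the corresponding $u=\mathcal{J}^{-1}_{\mathbb{R}^d}\wtu$ then lies in $\mathcal{H}^1(\mathbb{R}^d)$ and solves \eqref{PDE}. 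Since \eqref{variational Bloch} is the $\bk$-integral of the cell identity \eqref{variational Bloch 1} and the test space decouples in $\bk$, I would solve \eqref{variational Bloch 1} for each fixed $\bk\in W_{\!{\boldsymbol{I}^*}}$.

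Fixing $\bk$, I expand $\wtu(\bk;\cdot)$ in the orthonormal basis $\{e^{i\bk\cdot}\phi_m(\bk;\cdot)\}_{m\ge0}$ as in \eqref{Expansion Bloch modes}, with coefficients $c_m(\bk):=\langle\rho\,\wtu(\bk;\cdot),e^{i\bk\cdot}\phi_m(\bk;\cdot)\rangle$, and record the analogous coefficients $g_m(\bk)$ of the source $\mathcal{J}_{\mathbb{R}^d}[f_\eps(\eps\cdot)](\bk;\cdot)$. Testing \eqref{variational Bloch 1} with $\psi=e^{i\bk\cdot}\phi_n(\bk;\cdot)$ and invoking the weak form of \eqref{eigenfunction} (which diagonalizes both the stiffness and the mass terms) collapses the cell problem to the scalar relations
\begin{equation*}
\big(\omega_n^2(\bk)-\omega^2\big)\,c_n(\bk)\;=\;\eps^2\,g_n(\bk),\qquad n=0,1,2,\ldots.
\end{equation*}
The band-gap hypothesis \eqref{bg2} is precisely what makes this system invertible, since it guarantees $\omega^2\neq\omega_n^2(\bk)$ for every $n$ and every $\bk$. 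If $f_\eps\equiv0$ then each $g_n$ vanishes, hence $\wtu\equiv0$, giving uniqueness; for existence I set $c_n(\bk)=\eps^2 g_n(\bk)/(\omega_n^2(\bk)-\omega^2)$.

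The crux, and the step I expect to be the main obstacle, is to check that these coefficients define an element of $L^2(W_{\!{\boldsymbol{I}^*}};\mathcal{H}^1_{\bk}(W_{\!\boldsymbol{I}}))$, i.e. that
\begin{equation*}
\int_{W_{\!{\boldsymbol{I}^*}}}\sum_{n=0}^\infty\big(1+\omega_n^2(\bk)\big)\,|c_n(\bk)|^2\ind\bk\;=\;\eps^4\int_{W_{\!{\boldsymbol{I}^*}}}\sum_{n=0}^\infty\frac{1+\omega_n^2(\bk)}{\big(\omega_n^2(\bk)-\omega^2\big)^2}\,|g_n(\bk)|^2\ind\bk
\end{equation*}
is finite. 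This is where the gap must be used quantitatively rather than pointwise: I would argue that $\inf_{n,\bk}|\omega_n^2(\bk)-\omega^2|=:\delta>0$, using that \eqref{bg2} places $\omega^2$ strictly inside a gap together with the continuity of the branches $\bk\mapsto\omega_n^2(\bk)$ noted after \eqref{eigenfunction} and the fact that $\omega_n^2(\bk)\to\infty$ as $n\to\infty$. The same two facts bound the multiplier $(1+\omega_n^2(\bk))/(\omega_n^2(\bk)-\omega^2)^2$: it is controlled by $\delta^{-2}\big(1+\omega_n^2(\bk)\big)$ on the finitely many low branches and decays like $\omega_n^{-2}(\bk)$ on the high branches.

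With a uniform bound $C$ on this multiplier, the displayed integral is at most $C\eps^4\sum_n\int_{W_{\!{\boldsymbol{I}^*}}}|g_n(\bk)|^2\ind\bk$, which by the $\rho$-weighted Parseval structure underlying \eqref{Convergence Bloch modes} equals—up to the harmless $\rho^{-1}$ weight, since $\rho$ is bounded away from $0$—a constant times $\|\mathcal{J}_{\mathbb{R}^d}[f_\eps(\eps\cdot)]\|^2_{L^2(W_{\!{\boldsymbol{I}^*}};L^2(W_{\!\boldsymbol{I}}))}=\|f_\eps(\eps\cdot)\|^2_{L^2(\mathbb{R}^d)}<\infty$ by the isometry in Lemma~\ref{Bloch isomorphism}. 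Measurability of $\bk\mapsto c_n(\bk)$ follows from the holomorphy of the eigenpairs recorded after \eqref{eigenfunction}. This places $\wtu\in L^2(W_{\!{\boldsymbol{I}^*}};\mathcal{H}^1_{\bk}(W_{\!\boldsymbol{I}}))$, and applying the converse direction of Theorem~\ref{variational Bloch Thm} (equivalently, setting $u=\mathcal{J}^{-1}_{\mathbb{R}^d}\wtu$) yields the desired unique $u\in\mathcal{H}^1(\mathbb{R}^d)$.
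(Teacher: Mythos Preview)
Your proposal is correct and follows the same route as the paper: reduce to the $\bk$-indexed cell problems via Theorem~\ref{variational Bloch Thm}, solve each one (the paper simply cites \cite{lions2011asymptotic} for this, whereas you spell out the spectral inversion $c_n(\bk)=\eps^2 g_n(\bk)/(\omega_n^2(\bk)-\omega^2)$ and the uniform gap bound $\inf_{n,\bk}|\omega_n^2(\bk)-\omega^2|>0$), and then invoke the converse direction of Theorem~\ref{variational Bloch Thm}. The paper's treatment is a two-sentence sketch; your version supplies the details it omits, in particular the verification that the assembled $\wtu$ actually lands in $L^2(W_{\!{\boldsymbol{I}^*}};\mathcal{H}^1_{\bk}(W_{\!\boldsymbol{I}}))$.
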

%%%%%%
\subsection{(Floquet-)Bloch expansion of $u$}
{ 
\begin{theorem} \label{Theorem Bloch expansion solution}
If $u \in \mathcal{H}^1(\mathbb{R}^d)$ is a solution to \eqref{PDE}, then \begin{eqnarray}
\mathcal{J}_{\mathbb{R}^d}u  (\bk; \wtx) &=& \sum_{m=0}^\infty  \langle \rho \mathcal{J}_{\mathbb{R}^d}u(\bk; \cdot), e^{i \bk   \cdot}\phi_m(\bk;\cdot) \rangle e^{i \bk \cdot \wtx}  \phi_m(\bk;\wtx),  \nonumber \\
&=& \sum_{m=0}^\infty \frac{\eps^2    \langle  \mathcal{J}_{\mathbb{R}^d} \big[{ f_{\epsilon}}(\eps \cdot) \big] ({\bk}; \cdot), e^{i  {\bk} \cdot}\phi_m({\bk};\cdot) \rangle  }{\omega^2_m(\bk) -\omega^2} e^{i\bk\cdot   \wtx} \phi_m(\bk; \wtx), \quad  \wtx \in W_{\!\boldsymbol{I}}. \quad \label{Section Bloch expansion 2}
\end{eqnarray}
and $u$ is given by
\begin{eqnarray} \label{u bloch 0}
u(\bx)&=&  \frac{1}{(2\pi)^{d/2}} \int_{W_{\!{\boldsymbol{I}}^*}} \mathcal{J}_{\mathbb{R}^d}u(\bk; \bx-\bj) e^{i \bk \cdot  \bj} ~\ind \bk \qquad (\mbox{$\bj$ is such that $\bx-\bj \in W_{\!{\boldsymbol{I}}}$ } ) \nonumber \\
&=&\frac{1}{(2\pi)^{d/2}} \int_{W_{\!{\boldsymbol{I}^*}}} \sum_{m=0}^\infty  \frac{\eps^2    \langle  \mathcal{J}_{\mathbb{R}^d} \big[{ f_{\epsilon}}(\eps \cdot) \big] ( {\bk}; \cdot), e^{i  {\bk} \cdot}\phi_m( {\bk};\cdot) \rangle  }{\omega^2_m(\bk) -\omega^2} e^{i\bk\cdot   \bx} \phi_m(\bk;\bx) \ind \bk .
\end{eqnarray}
\end{theorem}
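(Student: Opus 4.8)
The plan is to obtain the first identity in \eqref{Section Bloch expansion 2} as a direct instance of the Bloch-mode expansion, and then to derive the second (explicit) identity by projecting the fiber equation onto each mode. First I would note that since $u \in \mathcal{H}^1(\mathbb{R}^d)$, Lemma~\ref{Bloch isomorphism} gives $\wtu = \mathcal{J}_{\mathbb{R}^d} u \in L^2(W_{\!\boldsymbol{I}^*};\mathcal{H}^1_{\bk}(W_{\!\boldsymbol{I}}))$, so the series representation \eqref{Expansion Bloch modes} applies verbatim with $\tilde{\psi} = \wtu$, yielding the first line of \eqref{Section Bloch expansion 2} with convergence in the $\|\nes|\cdot|\nes\|$-norm.

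The core of the argument is the evaluation of the coefficients $\langle \rho\, \wtu(\bk;\cdot), e^{i\bk\cdot}\phi_m(\bk;\cdot) \rangle$. Working at a.e.\ fixed $\bk$, I would take $\psi(\bk;\cdot) = e^{i\bk\cdot}\phi_m(\bk;\cdot)$ as the test function in the fiber equation \eqref{WLambda distributional PDE}, equivalently in its weak form \eqref{variational Bloch 1}. Factoring the quasi-periodic phase $e^{i\bk\cdot\wtx}$ out of both $\wtu$ and the test function converts $\nabla$ into the shifted gradient $\nabla + i\bk$ acting on the underlying $\boldsymbol{I}$-periodic factors, so that the gradient term becomes precisely the sesquilinear form associated with $\mathcal{A}(\bk)$ evaluated on $\phi_m$ and the periodic part of $\wtu$. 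Invoking the weak form of the eigenrelation \eqref{eigenfunction}, together with the self-adjointness of $\mathcal{A}(\bk)$ in the $\rho$-weighted inner product and the reality of $G$ and $\rho$, I would replace this gradient form by $\omega_m^2(\bk)$ times the corresponding $\rho$-weighted mass term. The two mass terms then combine into $\big(\omega_m^2(\bk) - \omega^2\big)\langle \rho\, \wtu, e^{i\bk\cdot}\phi_m \rangle$ on the left, while the right-hand side equals $\eps^2 \langle \mathcal{J}_{\mathbb{R}^d}[f_\eps(\eps\cdot)], e^{i\bk\cdot}\phi_m \rangle$.

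Because $\omega$ lies in a band gap, condition \eqref{bg2} guarantees $\omega_m^2(\bk) - \omega^2 \neq 0$ for every $m$ and every $\bk \in W_{\!\boldsymbol{I}^*}$; since the gap is an open interval, $|\omega_m^2(\bk) - \omega^2|$ is in fact bounded away from zero uniformly in $m$ and $\bk$. I would therefore divide to solve for the coefficient, substitute into the first line of \eqref{Section Bloch expansion 2} to obtain the second line, and observe that the uniform gap keeps the rewritten coefficients controlled so that convergence in the $\|\nes|\cdot|\nes\|$-norm is preserved. Finally, the representation \eqref{u bloch 0} for $u$ itself follows by applying the inverse transform \eqref{Def inverse Bloch transform} of Lemma~\ref{Bloch isomorphism} to $\wtu$.

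I expect the main obstacle to be the bookkeeping in the projection step: justifying that the fiber identity \eqref{variational Bloch 1} may be tested mode-by-mode at a.e.\ $\bk$, and carrying out the integration by parts so that the gradient form is cleanly identified with the eigenvalue sesquilinear form. Once the phase $e^{i\bk\cdot}$ is stripped and $\nabla$ is replaced by $\nabla + i\bk$, the self-adjointness of $\mathcal{A}(\bk)$ does the rest, but care is needed with the complex conjugation to ensure the real eigenvalue $\omega_m^2(\bk)$ factors out correctly.
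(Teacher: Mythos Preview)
Your proposal is correct and follows essentially the same route as the paper: testing the fiber variational identity \eqref{variational Bloch 1} against $e^{i\bk\cdot}\phi_m(\bk;\cdot)$, converting the gradient term to $\omega_m^2(\bk)$ times the mass term via \eqref{eigenfunction}, dividing by the nonzero $\omega_m^2(\bk)-\omega^2$, and then applying the inverse Bloch transform. The paper's argument is a terser version of exactly this, so there is nothing to add.
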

}
\begin{proof}
Starting from the variational formulation \eqref{variational Bloch 1} with test function $\psi(\bk;\wtx) = e^{i\bk \cdot \wtx} \phi_m(\bk;\wtx)$, we find 
\begin{multline*} 
\int_{W_{\!\boldsymbol{I}}}G(\wtx)\nabla_{\wtx} [\mathcal{J}_{\mathbb{R}^d} u(\bk; \wtx)] \overline{\nabla_{\wtx} e^{i\bk \cdot \wtx} \phi_m(\bk;\wtx)} \ind\wtx \\
- \omega^2 \int_{W_{\!\boldsymbol{I}}}\rho(\wtx)\hh [\mathcal{J}_{\mathbb{R}^d} u(\bk; \wtx)] \overline{e^{i\bk \cdot \wtx} \phi_m(\bk;\wtx)} \ind \wtx 
= \eps^2   \int_{W_{\!\boldsymbol{I}}} \mathcal{J}_{\mathbb{R}^d} \big[{ f_{\epsilon}}(\eps \cdot) \big] (\bk; \wtx) \overline{e^{i\bk \cdot \wtx} \phi_m(\bk;\wtx)} \ind \wtx.  
\end{multline*}
Integrating by parts the first term in the above equation and using~\eqref{eigenfunction}, one obtains 
\begin{multline*} 
\omega_m^2 \!\int_{W_{\!\boldsymbol{I}}} \rho(\wtx)[\mathcal{J}_{\mathbb{R}^d} u   (\bk; \wtx)] \overline{ e^{i\bk \cdot \wtx} \phi_m(\bk;\wtx)} \ind \wtx  \,-\, \omega^2 \!\int_{W_{\!\boldsymbol{I}}}\rho(\wtx)\hh [\mathcal{J}_{\mathbb{R}^d} u   (\bk; \wtx)] \overline{  e^{i\bk \cdot \wtx} \phi_m(\bk;\wtx)} \ind \wtx   \nonumber \\
= \eps^2   \int_{W_{\!\boldsymbol{I}}} \mathcal{J}_{\mathbb{R}^d} \big[{ f_{\epsilon}}(\eps \cdot) \big] (\bk; \wtx) \overline{  e^{i\bk \cdot \wtx} \phi_m(\bk;\wtx)} \ind \wtx, \label{Section Bloch expansion 1}
\end{multline*}
i.e.   
\begin{eqnarray*} 
&& (\omega_m^2(\bk)- \omega^2)  \langle \rho \mathcal{J}_{\mathbb{R}^d}u  (\bk; \cdot), e^{i \bk  \cdot}\phi_m(\bk;\cdot) \rangle    = \eps^2    \langle \mathcal{J}_{\mathbb{R}^d} \big[{ f_{\epsilon}}(\eps \cdot) \big] (\bk; \cdot), e^{i \bk \cdot}\phi_m(\bk;\cdot) \rangle.
\end{eqnarray*}
Since $\omega$ is fixed and $\omega^2_m(\bk) -\omega^2\not=0$, we thus obtain \eqref{Section Bloch expansion 2}. Then equation \eqref{u bloch 0} follows by applying the inverse (Floquet-)Bloch transform in Lemma \ref{Bloch isomorphism}.
\end{proof}

%-------------------------------------------------------------------------------------------
\subsection{Source term}

The source term ${ f_{\epsilon}}$ plays an important role in the second-order homogenization of wave motion in periodic media. To facilitate the analysis, we assume that ${ f_{\epsilon}}$ is the Fourier transform of a compactly-supported function (multiplied by the $p$th eigenfunction in the high frequency case). To bring specificity to the discussion, we assume the following.
\begin{assum} \label{assumption f}
The source term ${ f_{\epsilon}}$ in~\eqref{PDE fast} is assumed to has the following form
\begin{eqnarray}\label{source1}
{ f_{\epsilon}}(\bx) = \frac{1}{(2\pi)^{d/2}} \Big(\int_{\mathbb{R}^d} F(\bk) e^{i\bk\cdot \bx} \ind \bk\Big) \rho(\bx/\eps)\phi_p(\boldsymbol{0}; \bx/\eps),
\end{eqnarray}
where $\mathcal{F}[F] \in L^2(\mathbb{R}^d) \cap  L^1(\mathbb{R}^d)$, $F \in L^2(\mathbb{R}^d) \cap  L^1(\mathbb{R}^d)$, and $F$ is compactly supported in some open bounded region $Y \subset \mathbb{R}^d$ with $\boldsymbol{0} \in Y$. 
\end{assum}

\begin{remark}
When $p=0$, $\phi_p(\boldsymbol{0}; \bx/\eps)$ is a constant. This implies that ${ f_{\epsilon}}(\bx)/\rho({\bx/\eps})$ is proportional to the inverse Fourier transform of $F$. We also remark that the compact-support requirement on $F$ could be relaxed, for instance by allowing for sufficiently fast decaying functions. We illustrate this claim by a numerical example in Section~7.  
\end{remark}
{ 
\begin{lemma} \label{almost orthogonality general}
Let $\phi$ and $\psi$ be bounded $\boldsymbol{I}$-periodic functions. Assume that the Fourier series  of $\rho(\bx)\overline{\psi(\bx)}\phi(\bx)$ converges pointwise almost everywhere, i.e. 
\begin{eqnarray} \label{Fourier of phiphi general}
\rho(\bx)\overline{\psi(\bx)}\phi(\bx) \sim \sum_{\bn \in \mathbb{Z}^d} a_{\bn} e^{i 2\pi \bn \cdot \bx}.
\end{eqnarray}
Then  
\begin{equation} \label{lemma 1 limit general}
 \int_{\mathbb{R}^{d}}  \mathcal{F}[F](-\eps \bx)  \rho(\bx) \psi(\bx)\overline{e^{i\bk \cdot \bx} \phi(\bx)}  \ind\bx =\eps^{-d}(2\pi)^{d/2}   \sum_{\bn \in \mathbb{Z}^d}  \overline{a}_{\bn} F\big(\big(\bk+2\pi \bn\big)/\eps\big).
\end{equation}
Further for sufficiently small $\eps$, the following claims hold. 

\begin{enumerate}
\item[(a)] If $\bk \in W_{\!{\boldsymbol{I}^*}} \backslash \overline{\eps Y}$, then
\begin{eqnarray*}
 \int_{\mathbb{R}^{d}}  \mathcal{F}[F](-\eps \bx)  \rho(\bx) \psi(\bx)\overline{e^{i\bk \cdot \bx} \phi(\bx)}  \ind\bx =0.
\end{eqnarray*}
\item[(b)] If $\bk \in \eps Y$, then 
\begin{eqnarray*}
 \int_{\mathbb{R}^{d}}  \mathcal{F}[F](-\eps \bx)  \rho(\bx) \psi(\bx)\overline{e^{i\bk \cdot \bx} \phi(\bx)}  \ind\bx  =\eps^{-d} (2\pi)^{d/2}  \overline{\langle \rho(\cdot)\overline{\psi(\cdot)}\phi(\cdot) \rangle}  F(\bk/\eps),
\end{eqnarray*}
where $\langle \cdot \rangle$ denotes the average in $W_{\boldsymbol{I}}$.
\end{enumerate}
\end{lemma}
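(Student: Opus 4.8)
The plan is to reduce the claim to a single Fourier-inversion identity and then read off parts (a) and (b) from the compact support of $F$. First I would record the conjugated Fourier series: since $\rho$ is real, $\rho\psi\overline{\phi}=\overline{\rho\overline{\psi}\phi}$, so from \eqref{Fourier of phiphi general} one obtains $\rho(\bx)\psi(\bx)\overline{\phi(\bx)}\sim\sum_{\bn\in\mathbb{Z}^d}\overline{a}_{\bn}\,e^{-i2\pi\bn\cdot\bx}$, still convergent pointwise almost everywhere, and hence $\rho(\bx)\psi(\bx)\overline{e^{i\bk\cdot\bx}\phi(\bx)}\sim\sum_{\bn}\overline{a}_{\bn}e^{-i(\bk+2\pi\bn)\cdot\bx}$. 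I denote $g:=\rho\psi\overline{\phi}$, a bounded $\boldsymbol{I}$-periodic function whose zeroth coefficient is $\overline{a}_{\boldsymbol{0}}=\overline{\langle\rho\overline{\psi}\phi\rangle}$.

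The single computation underlying everything is the scalar identity, valid for any $\bzeta\in\mathbb{R}^d$,
\begin{equation*}
\int_{\mathbb{R}^d}\mathcal{F}[F](-\eps\bx)\,e^{-i\bzeta\cdot\bx}\ind\bx=\eps^{-d}(2\pi)^{d/2}F(\bzeta/\eps),
\end{equation*}
which I would prove by the change of variables $\bw=-\eps\bx$ followed by the Fourier inversion formula; the latter is legitimate because $\mathcal{F}[F]\in L^1(\mathbb{R}^d)$, so $F$ is continuous and inversion holds everywhere. Applying this with $\bzeta=\bk+2\pi\bn$ to each term of the conjugated series and summing yields exactly \eqref{lemma 1 limit general}. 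I note for later use that, $F$ being compactly supported in $Y$, the right-hand side of \eqref{lemma 1 limit general} is in fact a \emph{finite} sum for every fixed $(\bk,\eps)$, since $F((\bk+2\pi\bn)/\eps)\neq0$ forces $|\bk+2\pi\bn|<\eps R$ (with $\operatorname{supp}F\subset B(\boldsymbol{0},R)$), which only finitely many $\bn$ can satisfy.

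The main obstacle is justifying the interchange of the infinite sum with the integral over the unbounded domain $\mathbb{R}^d$, because \eqref{Fourier of phiphi general} is only assumed to converge pointwise almost everywhere, not absolutely or uniformly, while $\mathcal{F}[F](-\eps\cdot)$ is merely $L^1$. I would handle this by Ces\`aro (Fej\'er) summation: the Ces\`aro means $\sigma_N$ of $g$ satisfy $\|\sigma_N\|_{L^\infty}\le\|g\|_{L^\infty}$ (positivity and unit mass of the Fej\'er kernel) and converge to $g$ almost everywhere. Since $|\sigma_N(\bx)e^{-i\bk\cdot\bx}\mathcal{F}[F](-\eps\bx)|\le\|g\|_{L^\infty}|\mathcal{F}[F](-\eps\bx)|\in L^1(\mathbb{R}^d)$, dominated convergence lets me pass the limit inside $\int_{\mathbb{R}^d}$. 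Each $\sigma_N$ is a finite trigonometric sum, so the integral against $\sigma_N$ equals a finite weighted sum of the scalar identities above, with weights in $[0,1]$ tending to $1$; because only finitely many of these terms are nonzero, the limit reproduces the finite sum on the right of \eqref{lemma 1 limit general}, establishing it rigorously.

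Finally, parts (a) and (b) follow by localizing this finite sum for small $\eps$. For $\bk\in W_{\!{\boldsymbol{I}^*}}=[-\pi,\pi]^d$ and $\bn\neq\boldsymbol{0}$, at least one component of $2\pi\bn$ has modulus $\ge2\pi$ while the corresponding component of $\bk$ lies in $[-\pi,\pi]$, so $|\bk+2\pi\bn|\ge\pi$; choosing $\eps<\pi/R$ forces $F((\bk+2\pi\bn)/\eps)=0$, leaving only the $\bn=\boldsymbol{0}$ term $\eps^{-d}(2\pi)^{d/2}\overline{a}_{\boldsymbol{0}}F(\bk/\eps)$. If $\bk\in W_{\!{\boldsymbol{I}^*}}\setminus\overline{\eps Y}$, then $\bk/\eps\notin\overline{Y}\supset\operatorname{supp}F$, so this term vanishes as well and the integral is zero, giving (a); if instead $\bk\in\eps Y$, then $\bk/\eps\in Y$ and the surviving term is $\eps^{-d}(2\pi)^{d/2}\overline{\langle\rho\overline{\psi}\phi\rangle}\,F(\bk/\eps)$, giving (b).
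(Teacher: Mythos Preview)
Your proof is correct and follows the same overall route as the paper: expand the periodic factor in its Fourier series, integrate term by term against $\mathcal{F}[F](-\eps\,\cdot)$ to obtain~\eqref{lemma 1 limit general} via the change of variables $\by=-\eps\bx$ and Fourier inversion, then use the compact support of~$F$ to kill all but the $\bn=\boldsymbol{0}$ term for small~$\eps$.

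The one genuine difference is in justifying the interchange of the infinite sum with the integral over~$\mathbb{R}^d$. The paper simply invokes the dominated convergence theorem together with pointwise convergence of the Fourier series and $\mathcal{F}[F]\in L^1$, without naming a dominating function; since partial sums of the Fourier series of a bounded function need not be uniformly bounded, this step is left somewhat implicit. Your Ces\`aro/Fej\'er argument closes that gap cleanly: the Fej\'er means are bounded by $\|g\|_{L^\infty}$ and converge a.e., so DCT applies directly, and since only finitely many modes survive on the right-hand side the passage from Ces\`aro weights to~$1$ is trivial. This is a genuine strengthening of the paper's argument at no real extra cost, and your quantitative bound $|\bk+2\pi\bn|\ge\pi$ for $\bn\neq\boldsymbol{0}$ also makes the ``sufficiently small~$\eps$'' in parts~(a)--(b) explicit.
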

\begin{proof}
From the pointwise convergence of the Fourier series expansion~\eqref{Fourier of phiphi general} of $ \rho(\bx)\overline{\psi}(\bx)\phi(\bx)$, $\mathcal{F}[F] \in L^1(\mathbb{R}^d)$, and the dominated convergence theorem, we conclude that
\begin{eqnarray*}
&&\int_{\mathbb{R}^{d}}  \mathcal{F}[F](-\eps \bx)  \rho(\bx) \psi(\bx)\overline{e^{i\bk \cdot \bx} \phi(\bx)}  \ind\bx \\
%&&\frac{1}{(2\pi)^{d/2}}  \int_{\mathbb{R}^{d}} \int_{\mathbb{R}^d} F(\boldeta) e^{i\eps \boldeta\cdot \bx}  \rho(\bx) \psi(\bx)\overline{e^{i\bk \cdot \bx} \phi(\bx)} \ind\boldeta  \ind\bx \\
%&=& \sum_{n \in \mathbb{Z}^d} \frac{1}{(2\pi)^{d/2}}  \overline{a}_{\bn} \int_{\mathbb{R}^d} \int_{\mathbb{R}^d} F(\boldeta) e^{i\eps \boldeta\cdot \bx}  \overline{e^{i\bk \cdot \bx}}   e^{-i 2\pi \bn \cdot \bx} \ind\boldeta  \ind\bx \\
&=& \sum_{n \in \mathbb{Z}^d}   \overline{a}_{\bn} \int_{\mathbb{R}^d}\mathcal{F}[F](-\eps \bx) e^{-i ( \bk + 2 \pi \bn)\cdot \bx}  \ind\bx =  \eps^{-d} \sum_{n \in \mathbb{Z}^d}  \overline{a}_{\bn} \int_{\mathbb{R}^d}\mathcal{F}[F](\by) e^{i ( \bk + 2 \pi \bn)\cdot \by/\eps}  \ind \by \\
&=& \eps^{-d}(2\pi)^{d/2} \sum_{\bn \in \mathbb{Z}^d}  \overline{a}_{\bn} F(\frac{\bk+2\pi \bn}{\eps}),
\end{eqnarray*}
where we have applied a change of variable $\by=-\eps \bx$ in the last two steps. This establishes claim~\eqref{lemma 1 limit general}.

Let us now prove part (a). Since $\bk \in W_{\!{\boldsymbol{I}^*}}$, we have $-\pi \leqslant {\bk}_{1,\cdots,d} \leqslant \pi $. We assume that $\bk \not \in \eps Y$ with $\boldsymbol{0} \in Y$; then for sufficiently small $\eps$, ${\bk}\not=\boldsymbol{0}$ and $\bk+2\pi \bn \not=\boldsymbol{0}$ for any $\bn \in \mathbb{Z}^d$. Therefore $\eps^{-1}(\bk+2 \pi \bn) \not \in Y$ for sufficiently small $\eps$. By Assumption \ref{assumption f}, $F$ is compactly supported in $Y \subset \mathbb{R}^d$, whereby $F(\eps^{-1}(\bk+2 \pi \bn))=0$ for any $\bn \in \mathbb{Z}^d$. By virtue of~\eqref{lemma 1 limit general}, we thus obtain   
\begin{eqnarray*}
\int_{\mathbb{R}^{d}}  \mathcal{F}[F](-\eps \bx)  \rho(\bx) \psi(\bx)\overline{e^{i\bk \cdot \bx} \phi(\bx)}  \ind\bx=0.
\end{eqnarray*}
Lastly we establish part (b). Since $\bk \in \eps Y$,  then for sufficiently small $\eps$, assumption $\eps^{-1}(\bk+ 2 \pi \bn) \in Y$ requires that  $\bn=\boldsymbol{0}$. This yields   
\begin{eqnarray*}
\int_{\mathbb{R}^{d}}  \mathcal{F}[F](-\eps \bx)  \rho(\bx) \psi(\bx)\overline{e^{i\bk \cdot \bx} \phi(\bx)}  \ind\bx =\eps^{-d}(2\pi)^{d/2} \overline{a}_{\boldsymbol{0}} F(\bk/\eps).
\end{eqnarray*}
This equation together with $\overline{a}_{\boldsymbol{0}}=\overline{\langle \rho(\bx)\overline{\psi(\bx)}\phi(\bx) \rangle}$ completes the proof.
\end{proof}
}
The above Lemma simply yields the following proposition.
\begin{proposition} \label{almost orthogonality}
Let $\phi$ be a  bounded $\boldsymbol{I}$-periodic function. Assume that the Fourier series  of $\rho(\bx)\overline{\phi_p(\boldsymbol{0}; \bx)} \phi(\bx)$ converges pointwise almost everywhere, then for sufficiently small $\eps$, the following claims hold. 

\begin{enumerate}
\item[(a)] If $\bk \in W_{\!{\boldsymbol{I}^*}} \backslash \overline{\eps Y}$, then
\begin{eqnarray*}
\int_{\mathbb{R}^d}   { f_{\epsilon}}(\eps  \bx) \overline{e^{i\bk \cdot \bx} \phi(\bx)} \ind\bx =0.
\end{eqnarray*}
\item[(b)] If $\bk \in \eps Y$, then 
\begin{eqnarray*}
\int_{\mathbb{R}^d}   { f_{\epsilon}}(\eps  \bx) \overline{e^{i\bk \cdot \bx} \phi(\bx)} \ind\bx =\eps^{-d} (2\pi)^{d/2}  \overline{\langle \rho(\cdot)\overline{\phi_p(\boldsymbol{0}; \cdot)}\phi(\cdot) \rangle} F(\bk/\eps).
\end{eqnarray*}
\end{enumerate}
\end{proposition}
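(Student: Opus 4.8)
The plan is to read off the proposition directly from Lemma~\ref{almost orthogonality general} by taking $\psi=\phi_p(\boldsymbol{0};\cdot)$, once the source term has been unwound into precisely the integrand that lemma treats. First I would substitute $\by=\eps\bx$ into the definition~\eqref{source1} of $f_\eps$ and use the Fourier convention to identify the plane-wave factor. Since $\mathcal{F}[F](-\eps\bx)=\tfrac{1}{(2\pi)^{d/2}}\int_{\mathbb{R}^d}F(\bk)e^{i\eps\bk\cdot\bx}\ind\bk$, this yields the clean identity
\begin{equation*}
f_\eps(\eps\bx)=\mathcal{F}[F](-\eps\bx)\,\rho(\bx)\,\phi_p(\boldsymbol{0};\bx),
\end{equation*}
so the integral appearing in the proposition is exactly $\int_{\mathbb{R}^d}\mathcal{F}[F](-\eps\bx)\,\rho(\bx)\,\phi_p(\boldsymbol{0};\bx)\,\overline{e^{i\bk\cdot\bx}\phi(\bx)}\ind\bx$, i.e. the left-hand side of~\eqref{lemma 1 limit general} with the choice $\psi=\phi_p(\boldsymbol{0};\cdot)$.

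Next I would verify that $\phi_p(\boldsymbol{0};\cdot)$ is admissible as $\psi$ in Lemma~\ref{almost orthogonality general}, namely that it is a bounded $\boldsymbol{I}$-periodic function. Periodicity is immediate: as an element of $\mathcal{H}^1_{\boldsymbol{0}}(W_{\!\boldsymbol{I}})$, the eigenfunction $\phi_p(\boldsymbol{0};\cdot)$ is $\boldsymbol{0}$-quasiperiodic, hence $\boldsymbol{I}$-periodic. Boundedness follows from elliptic regularity: since $G$ and $\rho$ are smooth and bounded away from zero, $\phi_p(\boldsymbol{0};\cdot)$ is smooth on the compact cell $W_{\!\boldsymbol{I}}$ and therefore bounded, and the bound propagates to $\mathbb{R}^d$ by periodicity. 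The standing hypothesis of the proposition---that the Fourier series of $\rho(\bx)\overline{\phi_p(\boldsymbol{0};\bx)}\phi(\bx)$ converges pointwise almost everywhere---is precisely the hypothesis~\eqref{Fourier of phiphi general} of the lemma under this choice, so all hypotheses are met.

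With these identifications, parts (a) and (b) of the proposition are simply the two cases of Lemma~\ref{almost orthogonality general}. For $\bk\in W_{\!\boldsymbol{I}^*}\backslash\overline{\eps Y}$, part (a) of the lemma gives vanishing of the integral; for $\bk\in\eps Y$, part (b) gives the value $\eps^{-d}(2\pi)^{d/2}\overline{a}_{\boldsymbol{0}}F(\bk/\eps)$, and the identification $\overline{a}_{\boldsymbol{0}}=\overline{\langle\rho(\cdot)\overline{\phi_p(\boldsymbol{0};\cdot)}\phi(\cdot)\rangle}$ of the zeroth Fourier coefficient as the cell average yields exactly the stated formula. There is no genuine obstacle here beyond bookkeeping; the only points requiring care are matching the sign and normalization in the Fourier convention when extracting the factor $\mathcal{F}[F](-\eps\bx)$ from~\eqref{source1}, and the brief regularity argument certifying that $\phi_p(\boldsymbol{0};\cdot)$ qualifies as a bounded periodic weight so that the lemma applies verbatim.
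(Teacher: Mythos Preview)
Your proposal is correct and follows essentially the same approach as the paper: unwind the definition~\eqref{source1} of $f_\eps(\eps\bx)$ to recognize the integral as $\int_{\mathbb{R}^d}\mathcal{F}[F](-\eps\bx)\,\rho(\bx)\,\phi_p(\boldsymbol{0};\bx)\,\overline{e^{i\bk\cdot\bx}\phi(\bx)}\ind\bx$, then invoke Lemma~\ref{almost orthogonality general} with $\psi=\phi_p(\boldsymbol{0};\cdot)$. The paper's proof is terser and does not pause to verify boundedness of $\phi_p(\boldsymbol{0};\cdot)$, but your added regularity remark is a reasonable (and correct) elaboration rather than a different route.
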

\begin{proof}
By~\eqref{source1}, we obtain that
\begin{eqnarray*}
&&\hspace{-0.5cm}\int_{\mathbb{R}^d}   { f_{\epsilon}}(\eps  \bx) \overline{e^{i\bk \cdot \bx} \phi(\bx)} \ind\bx 
=\int_{\mathbb{R}^d}   \frac{1}{(2\pi)^{d/2}} \Big(\int_{\mathbb{R}^d} F(\boldeta) e^{i\eps \boldeta\cdot \bx} d\boldeta\Big) \rho(\bx)\phi_p(\boldsymbol{0}; \bx) \overline{e^{i\bk \cdot \bx} \phi(\bx)} \ind\bx \\
&=& \int_{\mathbb{R}^{d}}  \mathcal{F}[F](-\eps \bx)  \rho(\bx) \phi_p(\boldsymbol{0}; \bx)\overline{e^{i\bk \cdot \bx} \phi(\bx)}  \ind\bx.
%\frac{1}{(2\pi)^{d/2}}  \int_{\mathbb{R}^{d}} \int_{\mathbb{R}^d} F(\boldeta) e^{i\eps \boldeta\cdot \bx}  \rho(\bx) \phi_p(\boldsymbol{0}; \bx)\overline{e^{i\bk \cdot \bx} \phi(\bx)} \ind\boldeta  \ind\bx. 
\end{eqnarray*}
Then the proposition follows from Lemma \ref{almost orthogonality general}.
\end{proof}
\begin{remark}
In Proposition \ref{almost orthogonality}, we assumed that the Fourier series  of $ \rho(\bx)\overline{\phi_p(\boldsymbol{0};\bx)} \phi(\bx)$ converges pointwise almost everywhere, namely 
\begin{eqnarray*}
 \rho(\bx)\overline{\phi_p(\boldsymbol{0};\bx)} \phi(\bx) = \sum_{\bn \in \mathbb{Z}^d} a_{\bn} e^{i \bn \cdot \bx}.
\end{eqnarray*}
This premise holds for instance if: (i) $\rho(\cdot)\overline{\phi_p(\boldsymbol{0};\cdot)}\phi(\cdot) \!\in H^2_{\boldsymbol{0}}(W_{\!\boldsymbol{I}})$, {or} (ii) $\rho(\cdot)\overline{\phi_p(\boldsymbol{0};\cdot)}\phi(\cdot)$ is piecewise smooth in one dimension \cite[pp 35]{folland2009fourier}. In the former case, a standard regularity result demonstrates that the Fourier coefficients  satisfy
\begin{eqnarray*}
\sum_{\bn \in \mathbb{Z}^d} |a_{\bn}|^2 |\bn|^4 < \infty
\end{eqnarray*}
which, together with Holder inequality, yields 
\begin{eqnarray*}
\sum_{\bn \in \mathbb{Z}^d} |a_{\bn}| \leqslant  \big(\sum_{\bn \in \mathbb{Z}^d} |a_{\bn}|^2|\bn|^4\big)^{1/2}  \big(\sum_{\bn \in \mathbb{Z}^d} \frac{1}{|\bn|^4}\big)^{1/2} < \infty.
\end{eqnarray*}
This demonstrates that $\sum_{\bn \in \mathbb{Z}^d} a_{\bn} e^{i \bn \cdot \bx}$
{converges uniformly and pointwise}. {When} $\rho(\bx)\overline{\phi_p(\boldsymbol{0};\bx)}\phi(\bx) \in H^2_{\boldsymbol{0}}(W_{\!\boldsymbol{I}})$, then by the Sobolev embedding theorem \cite[pp 85--86]{adams2003sobolev}, $ \rho(\bx)\overline{\phi_p(\boldsymbol{0};\bx)}\phi(\bx)$ is continuous in $W_{\!\boldsymbol{I}}$. {In this case} we conclude that $\sum_{n \in \mathbb{Z}^d} a_{\bn} e^{i \bn \cdot \bx}$ converges uniformly and pointwise to the continuous function $\rho(\bx)\overline{\phi_p(\boldsymbol{0};\bx)}\phi(\bx)$.
\end{remark}
%%%%%%
\subsection{Reduced (Floquet-)Bloch expansion of~$u$}
We seek to simplify the solution $u$ given by Theorem \ref{Theorem Bloch expansion solution}. To begin with, we prove the following proposition.

\begin{proposition} \label{Prop f Y to f Rd}
\begin{eqnarray}  \label{f Y to f Rd}
&&  \langle  \mathcal{J}_{\mathbb{R}^d} \big[{ f_{\epsilon}}(\eps \cdot) \big] ( {\bk}; \cdot), e^{i  {\bk}  \cdot}\phi_m( ;\cdot) \rangle \nonumber \\
&=& \Big\{ 
\begin{array}{cc}
\frac{1}{(2\pi)^{d/2}}  \int_{\mathbb{R}^d}     { f_{\epsilon}}(\eps \by) e^{-i  {\bk} \cdot \by} \overline{ \phi_m( {\bk};\by)}\ind \by,   &  \bk \in \epsilon Y\\
0,  &   \bk \in W_{\!{\boldsymbol{I}^*}}\backslash \overline{\eps Y}
\end{array}.
\end{eqnarray}  
\end{proposition}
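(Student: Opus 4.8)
The plan is to reduce \eqref{f Y to f Rd} to a single ``unfolding'' identity valid for every $\bk \in W_{\!{\boldsymbol{I}^*}}$, namely
\[
\langle \mathcal{J}_{\mathbb{R}^d}\big[f_\epsilon(\eps\cdot)\big](\bk;\cdot),\, e^{i\bk\cdot}\phi_m(\bk;\cdot)\rangle
= \frac{1}{(2\pi)^{d/2}}\int_{\mathbb{R}^d} f_\epsilon(\eps\by)\, e^{-i\bk\cdot\by}\,\overline{\phi_m(\bk;\by)}\ind \by ,
\]
and then to read off the two cases from it.

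First I would insert the series definition \eqref{Def Bloch transform} of $\mathcal{J}_{\mathbb{R}^d}\big[f_\epsilon(\eps\cdot)\big]$ into the inner product over $W_{\!\boldsymbol{I}}$, turning it into $\frac{1}{(2\pi)^{d/2}}\int_{W_{\!\boldsymbol{I}}}\sum_{\bj\in\mathbb{Z}^d} f_\epsilon(\eps(\wtx+\bj))\,e^{-i\bk\cdot\bj}\,\overline{e^{i\bk\cdot\wtx}\phi_m(\bk;\wtx)}\ind \wtx$. In the $\bj$-th summand I substitute $\by=\wtx+\bj$, which maps $W_{\!\boldsymbol{I}}$ onto the translated cell $W_{\!\boldsymbol{I}}+\bj$. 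Two simplifications then occur: since $\phi_m(\bk;\cdot)\in\mathcal{H}^1_{\boldsymbol{0}}(W_{\!\boldsymbol{I}})$ is $\boldsymbol{I}$-periodic we have $\phi_m(\bk;\by-\bj)=\phi_m(\bk;\by)$, and the phase $e^{-i\bk\cdot\bj}$ cancels against the factor $e^{i\bk\cdot\bj}$ produced by $\overline{e^{i\bk\cdot(\by-\bj)}}$. Each summand therefore collapses to $\int_{W_{\!\boldsymbol{I}}+\bj} f_\epsilon(\eps\by)\,e^{-i\bk\cdot\by}\,\overline{\phi_m(\bk;\by)}\ind \by$, and since the cells $\{W_{\!\boldsymbol{I}}+\bj\}_{\bj\in\mathbb{Z}^d}$ tile $\mathbb{R}^d$, summation over $\bj$ reassembles the integral over all of $\mathbb{R}^d$, giving the displayed identity.

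With the identity established, the two cases are immediate. For $\bk\in\eps Y$ its right-hand side is precisely the first branch of \eqref{f Y to f Rd}, so nothing further is required. For $\bk\in W_{\!{\boldsymbol{I}^*}}\backslash\overline{\eps Y}$ I apply Proposition \ref{almost orthogonality}(a) with the bounded $\boldsymbol{I}$-periodic choice $\phi=\phi_m(\bk;\cdot)$; here the smoothness of $G$ and $\rho$ makes $\rho\,\overline{\phi_p(\boldsymbol{0};\cdot)}\,\phi_m(\bk;\cdot)$ smooth, so that the pointwise convergence of its Fourier series demanded by that proposition holds (cf. the Remark following it). Proposition \ref{almost orthogonality}(a) then forces the $\mathbb{R}^d$-integral, and hence the inner product, to vanish, yielding the second branch.

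The only point needing care is the interchange of the sum over $\bj$ with the integral over $W_{\!\boldsymbol{I}}$, which also underlies the reassembly into $\int_{\mathbb{R}^d}$. This is legitimate because $f_\epsilon(\eps\cdot)$ is integrable: writing $f_\epsilon(\eps\by)=\mathcal{F}[F](-\eps\by)\,\rho(\by)\,\phi_p(\boldsymbol{0};\by)$ from \eqref{source1} and using $\mathcal{F}[F]\in L^1(\mathbb{R}^d)$ together with the boundedness of $\rho$, $\phi_p(\boldsymbol{0};\cdot)$ and $\phi_m(\bk;\cdot)$, the integrand $f_\epsilon(\eps\by)\,e^{-i\bk\cdot\by}\,\overline{\phi_m(\bk;\by)}$ lies in $L^1(\mathbb{R}^d)$, so the Fubini--Tonelli theorem justifies the rearrangement. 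I expect this integrability bookkeeping, rather than the algebra of the unfolding, to be the main (and only genuine) obstacle.
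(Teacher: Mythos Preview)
Your proposal is correct and follows essentially the same approach as the paper: first unfold the Bloch transform into the full-space integral $\frac{1}{(2\pi)^{d/2}}\int_{\mathbb{R}^d} f_\epsilon(\eps\by)e^{-i\bk\cdot\by}\overline{\phi_m(\bk;\by)}\ind\by$ via the cell decomposition, then invoke Proposition~\ref{almost orthogonality}(a) (justified through the regularity $\rho\phi_m(\bk;\cdot)\in H^2_{\boldsymbol{0}}(W_{\!\boldsymbol{I}})$) to obtain the vanishing for $\bk\in W_{\!\boldsymbol{I}^*}\backslash\overline{\eps Y}$. Your treatment is in fact slightly more detailed than the paper's, which does not spell out the Fubini--Tonelli justification for the sum--integral interchange.
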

\begin{proof}
We first directly compute
\begin{eqnarray*} 
&&  \langle  \mathcal{J}_{\mathbb{R}^d} \big[{ f_{\epsilon}}(\eps \cdot) \big] ( {\bk}; \cdot), e^{i  {\bk}  \cdot}\phi_m( ;\cdot) \rangle \nonumber \\
&=& \frac{1}{(2\pi)^{d/2}}  \int_{W_{\!\boldsymbol{I}}}   \sum_{\bj \in \mathbb{Z}^d} { f_{\epsilon}}(\eps \wtx+\eps  \bj) e^{-i  {\bk} \cdot   \bj} \overline{e^{i  {\bk} \cdot  \wtx}\phi_m( {\bk};\wtx)}\ind \wtx    \nonumber \\
&=& \frac{1}{(2\pi)^{d/2}}  \int_{\mathbb{R}^d}     { f_{\epsilon}}(\eps \by) e^{-i  {\bk} \cdot \by} \overline{ \phi_m( {\bk};\by)}\ind \by.  
\end{eqnarray*}  
Since~$G$  and~$\rho$ are smooth by premise, a standard regularity result yields $\rho\phi_m(\bk;\cdot) \in H^2_{\boldsymbol{0}}(W_{\!\boldsymbol{I}})$, see for instance \cite[Chap. 7]{saranen2013periodic} or \cite{lions2011asymptotic}. Then by Proposition \ref{almost orthogonality}, $\int_{\mathbb{R}^d}     { f_{\epsilon}}(\eps \by) e^{-i  {\bk} \cdot \by} \overline{ \phi_m( {\bk};\by)}\ind \by$ vanishes for $\bk \in W_{\!{\boldsymbol{I}^*}}\backslash \overline{\eps Y}$. This completes the proof.  
\end{proof}

With the above equality, we can rewrite the solution $u \in \mathcal{H}^1(\mathbb{R}^d)$ in Theorem \ref{Theorem Bloch expansion solution} as
\begin{eqnarray}  
&&u(\bx) \nonumber\\
&=& \frac{1}{(2\pi)^{d/2}} \int_{W_{\!{\boldsymbol{I}^*}}} \sum_{m=0}^\infty  \frac{\eps^2    \langle  \mathcal{J}_{\mathbb{R}^d} \big[{ f_{\epsilon}}(\eps \cdot) \big] ( {\bk}; \cdot), e^{i  {\bk} \cdot}\phi_m( {\bk};\cdot) \rangle  }{\omega^2_m(\bk) -\omega^2} e^{i\bk\cdot   \bx} \phi_m(\bk;\bx) \ind \bk \nonumber \\
%&=& \frac{1}{(2\pi)^{d/2}} \int_{\eps Y} \sum_{m=0}^\infty  \frac{\eps^2    \langle  \mathcal{J}_{\mathbb{R}^d} \big[{ f_{\epsilon}}(\eps \cdot) \big] ( {\bk}; \cdot), e^{i  {\bk} \cdot}\phi_m( {\bk};\cdot) \rangle  }{\omega^2_m(\bk) -\omega^2} e^{i\bk\cdot   \bx} \phi_m(\bk;\bx) \ind \bk \nonumber \\
%&+& \frac{1}{(2\pi)^{d/2}} \int_{W_{\!{\boldsymbol{I}^*}}\backslash \overline{\eps Y}} \sum_{m=0}^\infty  \frac{\eps^2    \langle  \mathcal{J}_{\mathbb{R}^d} \big[{ f_{\epsilon}}(\eps \cdot) \big] ( {\bk}; \cdot), e^{i  {\bk} \cdot}\phi_m( {\bk};\cdot) \rangle  }{\omega^2_m(\bk) -\omega^2} e^{i\bk\cdot   \bx} \phi_m(\bk;\bx) \ind \bk  \nonumber \\
&=&\frac{1}{(2\pi)^{d/2}} \int_{\eps Y} \sum_{m=0}^\infty  \frac{\eps^2   \langle  \mathcal{J}_{\mathbb{R}^d} \big[{ f_{\epsilon}}(\eps \cdot) \big] ( {\bk}; \cdot), e^{i  {\bk}  \cdot}\phi_m( ;\cdot) \rangle }{\omega^2_m(\bk) -\omega^2} e^{i\bk\cdot   \bx} \phi_m(\bk;\bx) \ind \bk.  \label{Section solution u 1}
%
%&=&\frac{1}{(2\pi)^{d}} \int_{W_{\!{\boldsymbol{I}^*}}} \sum_{m=0}^\infty  \frac{\eps^2  \int_{\mathbb{R}^d}     { f_{\epsilon}}(\eps \by) e^{-i  {\bk} \cdot \by} \overline{ \phi_m( {\bk};\by)}\ind \by }{\omega^2_m(\bk) -\omega^2} e^{i\bk\cdot   \bx} \phi_m(\bk;\bx) \ind \bk \label{Bloch expansion using f}\\
%&=&\frac{1}{(2\pi)^{d}} \int_{\eps Y} \sum_{m=0}^\infty  \frac{\eps^2  \int_{\mathbb{R}^d}     { f_{\epsilon}}(\eps \by) e^{-i  {\bk} \cdot \by} \overline{ \phi_m( {\bk};\by)}\ind \by }{\omega^2_m(\bk) -\omega^2} e^{i\bk\cdot   \bx} \phi_m(\bk;\bx) \ind \bk \nonumber \\
%&&+\frac{1}{(2\pi)^{d}} \int_{W_{\!{\boldsymbol{I}^*}}\backslash \overline{\eps Y}} \sum_{m=0}^\infty  \frac{\eps^2  \int_{\mathbb{R}^d}     { f_{\epsilon}}(\eps \by) e^{-i  {\bk} \cdot \by} \overline{ \phi_m( {\bk};\by)}\ind \by }{\omega^2_m(\bk) -\omega^2} e^{i\bk\cdot   \bx} \phi_m(\bk;\bx) \ind \bk.  \label{Bloch expansion using f Y}
\end{eqnarray}
\section{Contributions to the second-order approximation} \label{Significant contri} 

{Recalling we are interested in approximating, up to the second order, the wave motion at low wavenumbers. In this section, we demonstrate that such second-order
model originates completely from the $m=p$ term, while the contribution from all other branches is of higher order, more specifically~$O(\eps^3)$.} 

We first establish the following claim. 
\begin{theorem} \label{Thm uothers no contri}
Let 
\begin{equation} \label{u(p)}
\up(\bx)=
\frac{1}{(2\pi)^{d/2}} \int_{\eps Y}   \frac{\eps^2   \langle  \mathcal{J}_{\mathbb{R}^d} \big[{ f_{\epsilon}}(\eps \cdot) \big] ( {\bk}; \cdot), e^{i  {\bk}  \cdot}\phi_p( ;\cdot) \rangle }{\omega^2_p(\bk) -\omega^2} e^{i\bk\cdot   \bx} \phi_p(\bk;\bx) \ind \bk,
\end{equation}
and $\Up(\cdot) := \up(\cdot/\eps)$. Then for sufficiently small $\eps$, one has 
\begin{equation} \label{Thm u - uothers eqn 1}
\|u - \up\|_{L^2(\mathbb{R}^d)} \leqslant {C}\, \eps^{3-d/2} \|F\|_{L^2(\mathbb{R}^d)} 
\end{equation}
where {$C$ is a constant independent of~$\eps$}, and 
\begin{eqnarray}
\|{  U_{\epsilon} } - \Up\|_{L^2(\mathbb{R}^d)} \leqslant {C}\, \eps^{3} \|F\|_{L^2(\mathbb{R}^d)} \label{Thm u - uothers eqn 2},
\end{eqnarray}
\end{theorem}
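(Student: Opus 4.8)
The plan is to bound $\|u-\up\|_{L^2(\mathbb{R}^d)}$ by controlling the contribution of the tail $\sum_{m\neq p}$ to the Bloch expansion \eqref{Section solution u 1}, and then transfer this bound to $\|U_\eps-\Up\|$ via the scaling \eqref{aux2}. First I would write
\begin{eqnarray*}
u(\bx)-\up(\bx) = \frac{1}{(2\pi)^{d/2}} \int_{\eps Y} \sum_{m\neq p} \frac{\eps^2 \langle \mathcal{J}_{\mathbb{R}^d}[f_\eps(\eps\cdot)](\bk;\cdot), e^{i\bk\cdot}\phi_m(\bk;\cdot)\rangle}{\omega_m^2(\bk)-\omega^2} e^{i\bk\cdot\bx}\phi_m(\bk;\bx) \ind\bk,
\end{eqnarray*}
and apply Parseval's identity \eqref{Parseval's identity} from Lemma \ref{Lemma Lemma Bloch expansion}. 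Since the $\{e^{i\bk\cdot}\phi_m(\bk;\cdot)\}_m$ are $\rho$-orthonormal, the $L^2(\mathbb{R}^d)$-norm squared reduces (up to the equivalence constant $c_2$) to
\begin{eqnarray*}
\int_{\eps Y} \sum_{m\neq p} \frac{\eps^4 \,\big|\langle \mathcal{J}_{\mathbb{R}^d}[f_\eps(\eps\cdot)](\bk;\cdot), e^{i\bk\cdot}\phi_m(\bk;\cdot)\rangle\big|^2}{(\omega_m^2(\bk)-\omega^2)^2} \ind\bk.
\end{eqnarray*}

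Next I would estimate the two factors separately. For the denominator, I use that $\omega^2=\omega_p^2(\bzero)+\eps^2\sigma\hat\Omega^2$ from \eqref{drf1} sits in a band gap, so for every $m\neq p$ and every $\bk\in\eps Y$ there is a uniform spectral gap $|\omega_m^2(\bk)-\omega^2|\geqslant c>0$ independent of $\eps$; this lower bound is where I would invoke the band-gap hypothesis \eqref{bg2} together with continuity of the branches in $\bk$ near $\bzero$. For the numerator, Proposition \ref{Prop f Y to f Rd} gives the inner product as a multiple of $F(\bk/\eps)$ weighted by $\overline{\langle\rho\,\overline{\phi_p(\bzero;\cdot)}\,\phi_m(\bk;\cdot)\rangle}$, so after the substitution $\boldeta=\bk/\eps$ (introducing a Jacobian $\eps^d$) the integral becomes $\eps^{4}\cdot\eps^{-2d}\cdot\eps^{d}$ times $\int_Y \sum_{m\neq p}|\langle\cdots\rangle|^2 |F(\boldeta)|^2\ind\boldeta$, yielding the net power $\eps^{4-d}$ and hence $\eps^{2-d/2}$ after the square root. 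Recovering the claimed $\eps^{3-d/2}$ requires one extra power of $\eps$, which must come from the numerator coefficient: when $m\neq p$, the average $\langle\rho\,\overline{\phi_p(\bzero;\cdot)}\,\phi_m(\bk;\cdot)\rangle$ vanishes at $\bk=\bzero$ by orthonormality of the Bloch basis at $\bk=\bzero$, so by the holomorphic dependence of $\phi_m(\bk;\cdot)$ on $\bk$ (stated after \eqref{eigenfunction}) this coefficient is $O(|\bk|)=O(\eps|\boldeta|)$ on $\eps Y$; its square contributes the missing $\eps^2$.

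The main obstacle I expect is precisely this vanishing-to-first-order argument for the off-diagonal overlaps: one must justify that $\langle\rho\,\overline{\phi_p(\bzero;\cdot)}\,\phi_m(\bk;\cdot)\rangle = O(|\bk|)$ uniformly in $m$ in a way that survives summation over $m$, which needs the $\rho$-orthonormality at $\bk=\bzero$ plus a quantitative Lipschitz-in-$\bk$ estimate on $\phi_m(\bk;\cdot)$ together with decay in $m$ of the relevant norms so that $\sum_{m\neq p}$ converges. This is where I would lean on the regularity $\rho\phi_m(\bk;\cdot)\in H^2_{\bzero}(W_{\!\boldsymbol{I}})$ (already used in Proposition \ref{Prop f Y to f Rd}) and on $F\in L^2(\mathbb{R}^d)\cap L^1(\mathbb{R}^d)$ with compact support in $Y$, so the $\boldeta$-integral stays over the fixed bounded set $Y$. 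Finally, the transfer to $U_\eps$ is routine: by \eqref{aux2} one has $U_\eps(\br)=u(\br/\eps)$ and $\Up(\br)=\up(\br/\eps)$, and the change of variables $\br=\eps\bx$ multiplies the $L^2$-norm by $\eps^{d/2}$, converting the factor $\eps^{3-d/2}$ into $\eps^{3}$ and giving \eqref{Thm u - uothers eqn 2}; the constant $C$ and $\|F\|_{L^2}$ pass through unchanged.
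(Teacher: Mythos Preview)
Your overall architecture---Parseval on the Bloch side, the uniform spectral gap $|\omega_m^2(\bk)-\omega^2|\geqslant c$ for $m\neq p$ and $\bk\in\eps Y$, and the scaling $\br=\eps\bx$ to pass from \eqref{Thm u - uothers eqn 1} to \eqref{Thm u - uothers eqn 2}---is exactly the paper's. The gap is precisely where you flagged it: the mechanism by which the off-diagonal overlap supplies the extra factor of $\eps$.

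Your plan is to expand each $\phi_m(\bk;\cdot)$ about $\bk=\bzero$ and use orthonormality at $\bzero$, giving $\langle\rho\,\overline{\phi_p(\bzero;\cdot)}\,\phi_m(\bk;\cdot)\rangle=O(|\bk|)$. This needs a Lipschitz-in-$\bk$ bound on $\phi_m(\bk;\cdot)$ that is \emph{uniform in $m$} and survives $\sum_{m\neq p}$. Neither is available in general: the holomorphicity statement after \eqref{eigenfunction} excludes points where multiplicity changes, so for $m\neq p$ the branch $\phi_m(\bk;\cdot)$ need not even be continuous at $\bzero$; and even where it is analytic, nothing prevents the derivative norm from growing with $m$. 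Invoking $\rho\phi_m\in H^2_{\bzero}$ does not rescue this, as that regularity is in $\wtx$, not in $\bk$.

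The paper sidesteps the whole issue by moving the Taylor expansion from $\phi_m$ to $\phi_p$. Since $\omega_p^2(\bzero)$ is simple, one writes $\phi_p(\bzero;\cdot)=\phi_p(\bk;\cdot)-\bk\cdot\bpsi_p(\bk;\cdot)$ with $\|\bpsi_p(\bk;\cdot)\|_{(L^2(W_{\!\boldsymbol{I}}))^d}\leqslant c$ for $\bk\in\eps Y$. Applying Lemma~\ref{almost orthogonality general} to each piece, the $\phi_p(\bk;\cdot)$ term vanishes by orthogonality at the \emph{same} $\bk$ (not at $\bzero$), and the remainder produces the explicit factor $\bk$. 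The sum over $m$ is then handled for free by Bessel's inequality,
\[
\sum_{m\neq p}\big|\langle\rho\,\bpsi_p(\bk;\cdot)\,\overline{\phi_m(\bk;\cdot)}\rangle\big|^2 \;\leqslant\; c'\,\|\bpsi_p(\bk;\cdot)\|_{(L^2(W_{\!\boldsymbol{I}}))^d}^2 \;\leqslant\; c'c^2,
\]
since $\{\phi_m(\bk;\cdot)\}_m$ is a $\rho$-orthonormal basis. This single expansion of the simple $p$th eigenfunction, plus orthogonality at $\bk$, is the missing idea; once you have it, your remaining steps go through verbatim.
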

\begin{proof}
From equation  \eqref{f Y to f Rd}, we have that
\begin{equation*}  
\up(\bx)=
\frac{1}{(2\pi)^{d}} \int_{\eps Y}   \frac{\eps^2  \int_{\mathbb{R}^d}     { f_{\epsilon}}(\eps \by) e^{-i  {\bk} \cdot \by} \overline{ \phi_p( {\bk};\by)}\ind \by }{\omega^2_p(\bk) -\omega^2} e^{i\bk\cdot   \bx} \phi_p(\bk;\bx) \ind \bk.
\end{equation*}
Let
\begin{equation} \label{Section contri u others}
u^{\mbox{\tiny ($m\!\neq\!p$)}}(\bx) = \frac{1}{(2\pi)^{d}} \int_{\eps Y}  \sum_{m\not=p} \frac{\eps^2  \int_{\mathbb{R}^d}     { f_{\epsilon}}(\eps \by) e^{-i  {\bk} \cdot \by} \overline{ \phi_m( {\bk};\by)}\ind \by }{\omega^2_m(\bk) -\omega^2} e^{i\bk\cdot   \bx} \phi_m(\bk;\bx) \ind \bk.
\end{equation}
In then follows that $u - \up = u^{\mbox{\tiny ($m\!\neq\!p$)}}$.

First, we find from assumption~\eqref{source1} that
\begin{eqnarray}
&& \int_{\mathbb{R}^d}     { f_{\epsilon}}(\eps \by) e^{-i  {\bk} \cdot \by} \overline{ \phi_m( {\bk};\by)}\ind \by \nonumber \\
&=&\int_{\mathbb{R}^d}      \frac{1}{(2\pi)^{d/2}} \Big(\int_{\mathbb{R}^d} F(\bk) e^{i\bk\cdot \eps \by} \ind \bk\Big) \rho(\by)\phi_p(\boldsymbol{0}; \by) e^{-i  {\bk} \cdot \by} \overline{ \phi_m( {\bk};\by)}\ind \by. \label{Section contri others 0-1}
\end{eqnarray}
We now seek to estimate this quantity. Our idea is to replace $\phi_p(\boldsymbol{0} ;\cdot)$ by $\phi_p({\bk};\cdot)$, and then estimate the difference. Since $\phi_p({\bk};\cdot)$ has a convergent Taylor series for $\bk$ in a small neighborhood of $\boldsymbol{0}$ (since $\phi_p({\bk};\cdot)$ and $\omega_p^2({\bk})$ are holomorphic in $\bk$ except a null set where the multiplicity of $\omega_p^2({\bk})$ changes \cite{wilcox1978theory}\cite[pp 341]{lions2011asymptotic}, and $\omega_p(\boldsymbol{0})$ is a simple eigenvalue), then for sufficiently small $\eps$ and $\bk \in  \eps Y$ we have 
 \begin{eqnarray}
\phi_p({\bk};\cdot) \:=\: \phi_p(\boldsymbol{0};\cdot) + {\bk} \cdot \boldsymbol{\psi}_p({\bk};\cdot)  
\end{eqnarray}
for some $\boldsymbol{\psi}_p({\bk};\cdot)$ that satisfies $\|\boldsymbol{\psi}_p({\bk};\cdot)\|_{(L^2(W_{\!\boldsymbol{I}}))^d} \leqslant c$, where $c$ is a constant independent of ${\bk}$. We then obtain
\begin{eqnarray}
 &&  \int_{\mathbb{R}^d} \frac{1}{(2\pi)^{d/2}} \Big(\int_{\mathbb{R}^d} F(\bk) e^{i\bk\cdot \eps \by} \ind \bk\Big) \rho(\by)\phi_p(\boldsymbol{0}; \by) e^{-i  {\bk} \cdot \by} \overline{ \phi_m( {\bk};\by)}\ind \by \nonumber \\
 &=& \int_{\mathbb{R}^d} \frac{1}{(2\pi)^{d/2}} \Big(\int_{\mathbb{R}^d} F(\bk) e^{i\bk\cdot \eps \by} \ind \bk\Big) \rho(\by)\Big(\phi_p({\bk};\by) -  {\bk} \cdot \boldsymbol{\psi}_p({\bk};\by) \Big) \cdot e^{-i  {\bk} \cdot \by} \overline{ \phi_m( {\bk};\by)}\ind \by. \nonumber \\ \label{Section contri others 1}
\end{eqnarray}
{ We next apply Lemma \ref{almost orthogonality general} with $\psi = \phi_p(\bk; \cdot), \phi = \phi_m(\bk;\cdot)$ and with $\psi = \boldsymbol{\psi}_p({\bk};\cdot), \phi = \phi_m(\bk;\cdot)$   respectively}. {Recalling the notation $\bk = \eps\hat{\bk}$}, one finds this applicable since both $\phi_p(\eps \hat{\bk};\by)$ and $\bk \cdot \boldsymbol{\psi}_p(\eps \hat{\bk};\by) = \phi_p(\eps \hat{\bk};\by) - \phi_p(\boldsymbol{0}; \by)$ belong to $\mathcal{H}^2_{\boldsymbol{0}}(W_{\!\boldsymbol{I}})$. This gives  
\begin{multline}
\int_{\mathbb{R}^d} \frac{1}{(2\pi)^{d/2}} \Big(\int_{\mathbb{R}^d} F(\bk) e^{i\bk\cdot \eps \by} \ind \bk\Big) \rho(\by) \phi_p({\bk};\by)  e^{-i  {\bk} \cdot \by} \overline{ \phi_m( {\bk};\by)}\ind \by \\
= \eps^{-d} (2\pi)^{d/2}  \langle  {\rho \phi_p(  {\bk}; \cdot)} \overline{\phi_m( {\bk};{\cdot})} \rangle F(\bk/\eps) =0 \mbox{ (orthogonality)}, \label{Section contri others 2}
\end{multline}
and
\begin{multline}
\int_{\mathbb{R}^d} \frac{1}{(2\pi)^{d/2}} \Big(\int_{\mathbb{R}^d} F(\bk) e^{i\bk\cdot \eps \by} \ind \bk\Big) \rho(\by) \big(\bk \cdot \boldsymbol{\psi}_p({\bk};\by) \big)  e^{-i  {\bk} \cdot \by} \overline{ \phi_m( {\bk};\by)}\ind \by \\
= \eps^{-d} (2\pi)^{d/2}  \langle  {\rho \bk \cdot \boldsymbol{\psi}_p(  {\bk}; \cdot)} \overline{\phi_m( {\bk};{\cdot})} \rangle F(\bk/\eps).\label{Section contri others 3}
\end{multline}
In this setting, \eqref{Section contri others 0-1} can be estimated via~\eqref{Section contri others 1}, \eqref{Section contri others 2} and \eqref{Section contri others 3}; in particular, we find 
\begin{eqnarray}
&&\hspace{-1cm} \int_{\mathbb{R}^d} { f_{\epsilon}}(\eps \by) e^{-i  {\bk} \cdot \by} \overline{ \phi_m( {\bk};\by)}\ind \by \:=\, -\bk \eps^{-d} (2\pi)^{d/2}  \langle  {\rho \bk\cdot \boldsymbol{\psi}_p(  {\bk}; \cdot)} \overline{\phi_m( {\bk};{\cdot})} \rangle F(\bk/\eps). \label{Section contri others 4}
\end{eqnarray}

To estimate $\|u^{\mbox{\tiny ($m\!\neq\!p$)}}\|_{L^2(\mathbb{R}^d)}$ according to~\eqref{Section contri u others}, we write
\begin{eqnarray*}
\|u^{\mbox{\tiny ($m\!\neq\!p$)}}\|^2_{L^2(\mathbb{R}^d)} &=& \sum_{m\not=p} \int_{\eps Y} \Big| \frac{1}{(2\pi)^d}\frac{\eps^2  \int_{\mathbb{R}^d}     { f_{\epsilon}}(\eps \by) e^{-i  {\bk} \cdot \by} \overline{ \phi_m( {\bk};\by)}\ind \by }{\omega^2_m(\bk) -\omega^2} \Big|^2 \ind \bk,
\end{eqnarray*}
and make use of~\eqref{Section contri others 4} to obtain 
\begin{eqnarray*}
\|u^{\mbox{\tiny ($m\!\neq\!p$)}}\|^2_{L^2(\mathbb{R}^d)} &\leq& \frac{1}{(2\pi)^d}\sum_{m\not=p} \int_{\eps Y} \|\bk\|^2 \Big|\frac{\eps^{2-d}  \langle  {\rho \boldsymbol{\psi}_p(  {\bk}; \cdot)} \overline{\phi_m( {\bk};{\cdot})} \rangle F(\bk/\eps) }{\omega^2_m(\bk) -\omega^2} \Big|^2 \ind \bk \\
&=&  \frac{\eps^{4-2d} }{(2\pi)^d} \int_{\eps Y} \|\bk\|^2 \Big|  F(\bk/\eps)  \Big|^2  \sum_{m\not=p} \Big| \frac{\langle  {\rho \boldsymbol{\psi}_p(  {\bk}; \cdot)} \overline{\phi_m( {\bk};{\cdot})}\rangle}{\omega^2_m(\bk) -\omega^2} \Big|^2\ind \bk. \\
\end{eqnarray*}
Since $\omega^2_m(\bk) -\omega^2=O(1)$ for any $m\not=p$, from the last equality  we find 
\begin{eqnarray*}
\|u^{\mbox{\tiny ($m\!\neq\!p$)}}\|^2_{L^2(\mathbb{R}^d)} &\leqslant&  c{''} \es \frac{\eps^{4-2d} }{(2\pi)^d} \int_{\eps Y} \|\bk\|^2 \Big|  F(\bk/\eps)  \Big|^2  \sum_{m\not=p} \Big| \langle  {\rho \boldsymbol{\psi}_p(  {\bk}; \cdot)} \overline{\phi_m( {\bk};{\cdot})}\rangle \Big|^2\ind \bk \\
&\leqslant& c{'} \es \frac{\eps^{4-2d} }{(2\pi)^d} \int_{\eps Y} \|\bk\|^2 \Big|  F(\bk/\eps)  \Big|^2 \|\boldsymbol{\psi}_p(  {\bk}; \cdot) \|^2_{(L^2(W_{\!\boldsymbol{I}}))^d} \ind \bk, \\
\end{eqnarray*}
where $c'$ and $c^{''}$ are constants independent of $\eps$. On recalling that $\|\boldsymbol{\psi}_p({\bk};\cdot)\|_{(L^2(W_{\!\boldsymbol{I}}))^d} \leqslant c$ for some $c$ independent of ${\bk}$, from the above estimate we obtain 
\begin{eqnarray*}
\|u^{\mbox{\tiny ($m\!\neq\!p$)}}\|{^2}_{L^2(\mathbb{R}^d)} 
&\leqslant& c\es c' \, \frac{\eps^{4-2d} }{(2\pi)^d} \int_{\eps Y} \|\bk\|^2 \Big|  F(\bk/\eps)  \Big|^2  \ind \bk \:=\: c\es c'  \frac{\eps^{6-d} }{(2\pi)^d} \int_{Y} |\hat{\bk}|^2 \Big|  F(\hat{\bk})  \Big|^2  \ind \hat{\bk} \\
&\leqslant& {C^2} \, \eps^{6-d} \|F\|^2_{L^2(\mathbb{R}^d)},
\end{eqnarray*}
where $C$ is a constant independent of $\eps$. This establishes claim~\eqref{Thm u - uothers eqn 1}. Equation \eqref{Thm u - uothers eqn 2} then follows immediately from via the change of variable. 
\end{proof}

%
%{ Recall that $U_\epsilon - U^{(p)}$ does not contribute to the second-order approximation of $U_\epsilon$ as shown in TheoProposition \ref{almost orthogonality}rem \ref{Thm uothers no contri} (in particular equation \eqref{Thm u - uothers eqn 2}). Consequently, the second-order approximation of $U_\epsilon$ is given by that of $U^{(p)}=\up(\cdot/\epsilon)$ where $\up$ is given by~\eqref{Section u drive eqn 1}.  We now further simplify the formula for $\Up$.}
To motivate our asymptotic study in the next section, we make the following observation. First thanks to equation  \eqref{f Y to f Rd} and Proposition \ref{almost orthogonality}, one finds that 
\begin{eqnarray*}
 \langle  \mathcal{J}_{\mathbb{R}^d} \big[{ f_{\epsilon}}(\eps \cdot) \big] ( {\bk}; \cdot), e^{i  {\bk}  \cdot}\phi_m( ;\cdot) \rangle &=& \frac{1}{(2\pi)^{d/2}} \int_{\mathbb{R}^d}    { f_{\epsilon}}(\eps \by ) e^{-i  {\bk} \cdot \by} \overline{ \phi_p( {\bk};\by)}\ind \by \\
 &=&\eps^{-d}  \langle  {\rho \phi_p(\boldsymbol{0}; \cdot)} \overline{\phi_p( {\bk};{\cdot})} \rangle F(\bk/\eps).
\end{eqnarray*}
As a result, we obtain from \eqref{u(p)} the following representation
\begin{eqnarray}\label{Section u drive eqn 1}
\up(\bx) &=& \frac{1}{(2\pi)^{d/2}} \int_{\eps Y}    \frac{\eps^{2-d}    \langle \rho {\phi_p(\boldsymbol{0}; \cdot)} \overline{\phi_p( {\bk};{\cdot})} \rangle F(\bk/\eps) }{\omega^2_p(\bk) -\omega^2} e^{i\bk\cdot   \bx} \phi_p(\bk;\bx) \ind \bk,
\end{eqnarray}
and the expression of $U^{(p)}=\up(\cdot/\epsilon)$ with the help of a change of variable
\begin{eqnarray}
U^{(p)}(\br) &\:=\:& \frac{1}{(2\pi)^{d/2}} \int_{Y}    \frac{   \langle \rho {\phi_p(\boldsymbol{0}; \cdot)} \overline{\phi_p( \eps \hat{\bk};{\cdot})} \rangle F(\hat{\bk}) e^{i\eps \hat{\bk} \cdot   \br/\epsilon} \phi_p(\eps \hat{\bk};\br/\epsilon)}{\omega^2_p(\eps \hat{\bk}) -\omega^2}  \ind \hat{\bk}. \label{Section U drive eqn 1}
\end{eqnarray}
%Making use of the change of variable $\bk=\eps \hat{\bk}$, one further finds that  
%\begin{eqnarray} \label{Section u drive eqn 2}
%\|u^{\mbox{\tiny ($p$)}}\|^2_{L^2(\mathbb{R}^d)} 
%\:=\:  { \eps^{4-d}} \Big\|\frac{   \langle \rho {\phi_p(\boldsymbol{0}; \cdot)} \overline{\phi_p( \eps \hat{\bk};{\cdot})} \rangle F(\hat{\bk}) }{\omega^2_p(\eps \hat{\bk}) -\omega^2}\Big\|^2_{L^2( Y)}. ~~~~~~~
%\end{eqnarray}
To obtain the second-order asymptotic model of $\Up$, one accordingly needs the respective second-order approximations of $\phi_p(\eps \hat{\bk};\cdot)$ and $\omega^2_p(\eps \hat{\bk}) $. This motivates our study in the next section.
 %%%%%%%%%%%%%%%%%%%%%%%%%%%%%%%%
\section{Asymptotic expansion of $\phi_p({\bk};\cdot)$ and $\omega^2_p({\bk})$} \label{Asymptotic eigenfunction dispersion relation}
{  According to \cite{wilcox1978theory}, $\phi_p({\bk};\cdot)$ and $\omega_p^2({\bk})$ are holomorphic in $\bk$ except a null set where the multiplicity of $\omega_p^2({\bk})$ changes (see also \cite[pp 341]{lions2011asymptotic}).}
{  Since $\omega_p^2({\boldsymbol{0}})$ under our consideration is a simple eigenvalue (such that its multiplicity does not change in some small neighborhood of $\omega_p^2({\boldsymbol{0}})$, and accordingly $\phi_p({\bk};\cdot)$ and $\omega_p^2({\bk})$ are holomorphic in $\bk$ in such a small neighborhood) have convergent Taylor series, and the convergence for the Taylor series of $\phi_p({\bk};\cdot)$ is in $C(\overline{W_{\boldsymbol{I}}})$ \cite{wilcox1978theory} and hence in $L^2(W_{\boldsymbol{I}})$. As before, we let $\bk=\eps \hat{\bk}$ where $\eps$ is sufficiently small and $\hat{\bk}=O(1)$. Since $\omega_p({\boldsymbol{0}})$ is simple by premise, one easily verifies that $\omega_p(\eps \hat{\bk}) = \omega_p(-\eps \hat{\bk})$ for sufficiently small $\eps$; then $\frac{\partial^q}{\partial {\eps^q}}\omega_0^2(\eps \hat{\bk})=0$ for any odd $q$. As a result, we obtain the series expansion as in equations \eqref{wexp 1} and~\eqref{wexp 2}.} We further remark that such series expansion can be easily casted according to the perturbation theory \cite{kato2013perturbation} for a fixed direction $\hat{\bk}$ for less regular $G$ and $\rho$. However, here we apply the result of \cite{wilcox1978theory} such that the series expansion holds for all directions. 

We first state the following lemma which summarize the second-order approximations and prove the lemma later on.
\begin{lemma} \label{asymptotic main phi_0} 
For sufficiently small $\epsilon$, $\phi_p(\eps \hat{\bk};\bx)$ and $\omega_p^2(\eps \hat{\bk})$ have the following series expansion
\begin{eqnarray}
\phi_p(\eps \hat{\bk};\bx) ~&=&~  \tilde{w}_0(\bx) \;+\; \eps \hh \tilde{ w}_1(\bx) \;+\; \eps^2\hh \tilde{w}_2(\bx) \;+\; \eps^3\hh \tilde{ w}_3(\bx)  \;+\; \cdots, \label{wexp 1}\\
\omega_p^2(\eps \hat{\bk}) ~&=&~  \hat{\omega}_0^2 + \eps^2 \hat{\omega}_2^2 + \eps^4 \hat{\omega}_4^2 +   \cdots, \label{wexp 2}
\end{eqnarray}
where the convergence of \eqref{wexp 1} is in $L^2(W_{\!\boldsymbol{I}})$. Moreover, the unknown terms in \eqref{wexp 1} and~\eqref{wexp 2} are given respectively by 
\begin{eqnarray}
\tilde{w}_0(\bx) &=&  \ww_0 \phi_p(\boldsymbol{0};\bx), \label{wexp eigenfunction w0}\\
\tilde{ w}_1(\bx) &=& \bchi^{\mbox{\tiny{(1)}}} (\bx) \cdot i \hat{\bk}\hh \ww_0 + \bw_1 \cdot i\hat{\bk} \phi_p(\boldsymbol{0};\bx), \label{wexp eigenfunction w1}\\
\tilde{ w}_2 (\bx) \,&=&\, \bchi^{\mbox{\tiny{(2)}}} (\bx) : (i \hat{\bk})^{2}\hh  \ww_0 \,+\, \bchi^{\mbox{\tiny{(1)}}} (\bx) \otimes \bw_1 : (i \hat{\bk})^2   \, +\, \bw_2 :  (i \hat{\bk})^2 \phi_p(\boldsymbol{0};\bx), \label{wexp eigenfunction w2}\\
\tilde{ w}_3 (\bx) &=& \bchi^{\mbox{\tiny{(3)}}} (\bx):  (i \hat{\bk})^3 w_0  + (\bchi^{\mbox{\tiny{(2)}}} (\bx) : (i \hat{\bk})^2) \bw_1\cdot i \hat{\bk} \nonumber \\
&& +\bchi^{\mbox{\tiny{(1)}}}(\bx)   \cdot i\hat{\bk}\hh (\bw_2 : (i \hat{\bk})^2) + \bw_3: (i \hat{\bk})^3\phi_p(\boldsymbol{0};\bx),\label{wexp eigenfunction w3}
\end{eqnarray}
and 
\begin{eqnarray}\label{wexp dispersion}
\hat{\omega}_0^2=\omega_p^2(\boldsymbol{0}),\hspace{0.5cm}\hat{\omega}_2^2 =  -\frac{{\bmu}^{\mbox{\tiny{(0)}}}}{\rho^{\mbox{\tiny{(0)}}}}\!: (i\hat{\bk})^2,\hspace{0.5cm} \hat{\omega}_4^2 = -\frac{{\bmu}^{\mbox{\tiny{(2)}}}}{\rho^{\mbox{\tiny{(0)}}}}\!: (i\hat{\bk})^4, \qquad 
\end{eqnarray}
where constants $\ww_0$, $\bw_1$ satisfy the necessary conditions 
\begin{eqnarray}
 \ww_0 &=& 1, \label{necessary w0}\\
i \hat{\bk} \cdot ( \overline{\bw}_1-  \bw_1)&=&0, \label{necessary w1}\\
(i\hat{\bk})^2:\Big[ (\bw_2   + \overline{\bw}_2   ) - (\bw_1 \otimes \overline{\bw}_1) -      \langle \rho \bchi^{(1)} \otimes \overline{\bchi^{(1)}} \rangle  \Big]&=&0; \quad \label{necessary w2}
\end{eqnarray}
$\bchi^{\mbox{\tiny{(1)}}} $ is given by \eqref{ComparisonEqnchi1}; $\rho^{\mbox{\tiny{(0)}}}$ and ${\bmu}^{\mbox{\tiny{(0)}}}$ are given by \eqref{SecondOrderHomogenizationZerothOrderCoefficients}; $ \bchi^{\mbox{\tiny{(2)}}} $ is given by \eqref{Comparisonchi2}; $\brho_2$ and ${\bmu}^{\mbox{\tiny{(2)}}}$ are given by \eqref{SecondOrderHomogenizationSecondOrderCoefficients}, and $\bchi^{\mbox{\tiny{(3)}}}$ is given by \eqref{Comparisonchi3}. Here we have used the following short-hand notations.
\end{lemma}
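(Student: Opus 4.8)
The plan is to insert the convergent power series \eqref{wexp 1}--\eqref{wexp 2} --- whose convergence in $L^2(W_{\!\boldsymbol{I}})$ is guaranteed by the holomorphy result of \cite{wilcox1978theory} recalled above --- into the scaled cell eigenproblem $\mathcal{A}(\eps\hat{\bk})\,\phi_p(\eps\hat{\bk};\cdot) = \omega_p^2(\eps\hat{\bk})\,\phi_p(\eps\hat{\bk};\cdot)$ and to match coefficients order by order in $\eps$. First I would expand the operator itself: writing $\bk=\eps\hat{\bk}$ and developing $(\nabla+i\eps\hat{\bk})\!\cdot\![G(\nabla+i\eps\hat{\bk})\,\cdot\,]$ produces a polynomial in $\eps$,
\begin{equation*}
\mathcal{A}(\eps\hat{\bk}) = \mathcal{A}_0 + \eps\,\mathcal{A}_1 + \eps^2\,\mathcal{A}_2,
\end{equation*}
with $\mathcal{A}_0\phi = -\rho^{-1}\nabla\!\cdot\!(G\nabla\phi)$, $\mathcal{A}_1\phi = -i\rho^{-1}\big[\nabla\!\cdot\!(G\hat{\bk}\phi)+\hat{\bk}\!\cdot\!(G\nabla\phi)\big]$ and $\mathcal{A}_2\phi = \rho^{-1}G|\hat{\bk}|^2\phi$; the eigenvalue series carries only even powers, as in \eqref{wexp 2}, by the parity $\omega_p(\eps\hat{\bk})=\omega_p(-\eps\hat{\bk})$ noted above.

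At order $\eps^0$ one recovers $\mathcal{A}_0\tilde{w}_0 = \hat{\omega}_0^2\tilde{w}_0$, the cell eigenproblem at $\bk=\boldsymbol{0}$, forcing $\hat{\omega}_0^2=\omega_p^2(\boldsymbol{0})$ and $\tilde{w}_0=\ww_0\,\phi_p(\boldsymbol{0};\cdot)$ as in \eqref{wexp eigenfunction w0}. At each subsequent order $j$ the matched equation takes the form $(\mathcal{A}_0-\hat{\omega}_0^2)\tilde{w}_j = R_j$, where $R_j$ gathers the lower-order data. The central tool is the Fredholm alternative for the operator $\mathcal{A}_0-\hat{\omega}_0^2$, which is self-adjoint with respect to the $\rho$-weighted inner product $\langle\rho\,\cdot\,,\cdot\,\rangle$ and whose kernel is spanned by $\phi_p(\boldsymbol{0};\cdot)$ since $\omega_p^2(\boldsymbol{0})$ is simple. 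The solvability condition $\langle\rho R_j,\phi_p(\boldsymbol{0};\cdot)\rangle=0$ then determines the eigenvalue coefficient ($\hat{\omega}_2^2$ at $j=2$, $\hat{\omega}_4^2$ at $j=4$), yielding \eqref{wexp dispersion} with the effective moduli $\rho^{(0)}$, $\bmu^{(0)}$, $\bmu^{(2)}$ appearing there; the particular solution of each solvable equation is linear in the appropriate power of $i\hat{\bk}$ and defines the correctors $\bchi^{(1)},\bchi^{(2)},\bchi^{(3)}$ through the cell problems \eqref{ComparisonEqnchi1}, \eqref{Comparisonchi2}, \eqref{Comparisonchi3}, while the homogeneous part contributes the free multiples $\bw_j$ of $\phi_p(\boldsymbol{0};\cdot)$. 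Collecting these reproduces \eqref{wexp eigenfunction w1}--\eqref{wexp eigenfunction w3}.

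The remaining constants are pinned down by inserting the same series into the normalization $\langle\rho\,\phi_p(\eps\hat{\bk};\cdot),\phi_p(\eps\hat{\bk};\cdot)\rangle=1$ and again matching orders. The $\eps^0$ term gives $|\ww_0|^2=1$, and fixing the phase yields $\ww_0=1$, i.e. \eqref{necessary w0}; the $\eps^1$ and $\eps^2$ terms, after invoking the orthogonality $\langle\rho\,\bchi^{(1)},\phi_p(\boldsymbol{0};\cdot)\rangle=0$ built into the cell problems, collapse to the real-part constraints \eqref{necessary w1} and \eqref{necessary w2} on $\bw_1$ and $\bw_2$.

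I expect the main obstacle to be the bookkeeping at the higher orders: the residuals $R_j$ mix the correctors of all lower orders, with the tensor-valued $\bchi^{(i)}$ contracted against increasing powers of $i\hat{\bk}$, so checking that each $R_j$ satisfies the solvability condition --- and that the resulting particular solution has exactly the claimed tensorial structure --- requires careful integration by parts against $\overline{\phi_p(\boldsymbol{0};\cdot)}$ and repeated use of the self-adjointness of $\mathcal{A}_0$. A secondary subtlety is justifying that term-by-term matching is legitimate, which rests on the $L^2(W_{\!\boldsymbol{I}})$- (indeed $C(\overline{W_{\!\boldsymbol{I}}})$-) convergence of the Wilcox expansion rather than on a merely formal series.
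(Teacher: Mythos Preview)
Your proposal is correct and follows essentially the same route as the paper: both establish the series via Wilcox's holomorphy result and the parity $\omega_p(\eps\hat{\bk})=\omega_p(-\eps\hat{\bk})$, then match orders in the eigenproblem using the Fredholm alternative to extract the $\hat{\omega}_j^2$ and the cell problems for $\bchi^{(j)}$, and finally read off \eqref{necessary w0}--\eqref{necessary w2} from the normalization $\langle\rho\,\phi_p,\phi_p\rangle=1$. The only cosmetic difference is that the paper writes each order in a weak (tested-against-$\phi$) form rather than expanding $\mathcal{A}(\eps\hat{\bk})=\mathcal{A}_0+\eps\mathcal{A}_1+\eps^2\mathcal{A}_2$ in strong form as you do, and it obtains $w_0=1$ directly from continuity $\phi_p(\bk;\cdot)\to\phi_p(\boldsymbol{0};\cdot)$ rather than from a phase choice; neither point affects the substance.
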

\begin{itemize}
\item $:$ denotes the contraction between two tensors.
\item $\{\boldsymbol{\cdot}\}$ denotes tensor averaging over all index permutations; in particular for an $n$th-order tensor~$\boldsymbol{\tau}$, one has 
\begin{equation} \label{symtot}
\{\boldsymbol{\tau}\}_{j_1,j_2,\ldots j_n} ~=~ \frac{1}{n!}\sum_{(l_1,l_2,\ldots l_n)\in P} \boldsymbol{\tau}_{l_1,l_2,\ldots l_n}, \qquad j_1,j_2,\ldots j_n \in\overline{1,d}
\end{equation}
where~$P$ denotes the set of all permutations of~$(j_1,j_2,\ldots j_n)$. 
\item $\{\boldsymbol{\cdot}\}'$ denotes partial tensor symmetrization according to 
\begin{equation} \label{sympart}
\{\boldsymbol{\tau}\}'_{j_1,j_2,\ldots j_n} ~=~ \frac{1}{(n\!-\!1)!}\sum_{(l_2,\ldots l_n)\in Q} \boldsymbol{\tau}_{j_1,l_2,\ldots l_n}, \qquad j_1,j_2,\ldots j_n \in\overline{1,d}
\end{equation}
where~$Q$ denotes the set of all permutations of~$(j_2,j_3,\ldots j_n)$.
\end{itemize}

{ Here we remark that the equations for $\bchi^{\mbox{\tiny{(j)}}} $ (with $j=1,2,3$) are in the same spirit to those in \cite{wautier2015second,meng2018dynamic,guzina2019rational}.} Moreover
 that the featured smooth restriction on  $G$ and $\rho$ could in principle be relaxed, see for instance \cite{conca2002bloch} the the of case non-smooth coefficients  when $p=0$ (corresponding to the first branch). We also illustrate by numerical examples that our convergence results appear to apply for piecewise-constant coefficients.
%%%%%%
\subsection{Leading-order approximation} \label{Asymptotic eigenfunction dispersion relation subsection 1}
{  
To begin with, note that the eigenfunction $\phi_p(\eps \hat{\bk};\bx)$ satisfies the eigenvalue problem \eqref{eigenfunction}, therefore multiplying equation \eqref{eq:A2} by $\phi \in H^2_{\boldsymbol{0}}(W_{\!\boldsymbol{I}})$, integrating over $W_{\!\boldsymbol{I}}$, and performing integration by parts twice yield that 
$$
-\int_{W_{\!\boldsymbol{I}}}   \phi_p(\eps \hat{\bk};\bx)   \overline{ (\nabla + i \eps \hat{\bk}) \cdot \big[G \big(\nabla+ i \eps \hat{\bk}  ) \phi \big]} \ind \bx=  \omega_p^2(\eps \hat{\bk}) \int_{W_{\!\boldsymbol{I}}}  \rho  \phi_p(\eps \hat{\bk};\bx) \overline{\phi} \ind \bx,
$$
for any $\phi \in H^2_{\boldsymbol{0}}(W_{\!\boldsymbol{I}})$ and recalling that ${\mathcal{H}^2_{\boldsymbol{0}}}(W_{\!\boldsymbol{I}})$ is given by \eqref{Hsk Fourier norm}, noting that the subscript $\boldsymbol{0}$ indicates periodicity.

From~\eqref{wexp 1} and~\eqref{wexp 2}, then we directly have 
\begin{eqnarray} \label{eq:A1}
&&-\int_{W_{\!\boldsymbol{I}}}   \big( \tilde{w}_0 + \eps \hh \tilde{ w}_1 + \eps^2\hh \tilde{w}_2 + \eps^3\hh \tilde{ w}_3  +\cdots \big)   \overline{(\nabla + i \eps \hat{\bk}) \cdot \big[G \big(\nabla+ i \eps \hat{\bk}  ) \phi \big]}  \ind \bx~~~~~~  \\
&&=  \big(\hat{\omega}_0^2 + \eps^2 \hat{\omega}_2^2 + \eps^4 \hat{\omega}_4^2 +\cdots \big) \int_{W_{\!\boldsymbol{I}}}  \rho \big( \tilde{w}_0 + \eps \hh \tilde{ w}_1 + \eps^2\hh \tilde{w}_2 + \eps^3\hh \tilde{ w}_3  +\cdots \big) \overline{\phi} \ind \bx, \nonumber 
\end{eqnarray}
for any $\phi \in H^2_{\boldsymbol{0}}(W_{\!\boldsymbol{I}})$.

From~\eqref{wexp 2}, it is clear that $\hat{\omega}_0=\omega_p(\boldsymbol{0})$. The $O(1)$ contribution stemming from \eqref{eq:A1} reads
\begin{eqnarray} \label{eq:A2}
&&-\int_{W_{\!\boldsymbol{I}}}       \tilde{w}_0 \overline{\nabla \cdot \big[G\nabla\phi\big]} \ind \bx \:=\: \hat{\omega}_0^2  \int_{W_{\!\boldsymbol{I}}}  \rho  \tilde{w}_0 \overline{\phi} \ind \bx,  \quad \forall \phi \in {\mathcal{H}^2_{\boldsymbol{0}}}(W_{\!\boldsymbol{I}}), \quad
\end{eqnarray}
%Note that $\hat{\omega}_0=\omega_p(\boldsymbol{0})$, \eqref{eq:A2} gives a periodic distributional solution $\tilde{w}_0$ to $-\nabla \cdot \big[G \nabla    \tilde{w}_0\big] = \omega_p(\boldsymbol{0})^2 \rho \tilde{w}_0$. Note that $G$ and $\rho$ are smooth, the distributional solution $\tilde{w}_0$ belongs to ${\mathcal{H}^1_{\boldsymbol{0}}}(W_{\!\boldsymbol{I}})$ \cite{Grub, Theorem 6.22}. From the definition of the eigenfunction \eqref{eigenfunction}, 
Note that $\hat{\omega}_0=\omega_p(\boldsymbol{0})$, 
%\eqref{eq:A2} gives a periodic distributional solution $\tilde{w}_0$ to $-\nabla \cdot \big[G \nabla    \tilde{w}_0\big] = \omega_p(\boldsymbol{0})^2 \rho \tilde{w}_0$.
%Since $\{ \phi_m(\boldsymbol{0}; \bx)\}_{m=0}^\infty$ given by \eqref{eigenfunction} forms a complete orthonormal basis in $L^2(W_{\!\boldsymbol{I}})$, therefore $\tilde{w}_0(\bx)=\ww_0 \phi_p(\boldsymbol{0}; \bx) + \sum_{m\not=p} c_m \phi_m(\boldsymbol{0}; \bx)$ for some constants $\ww_0$ and $c_m (m\not=p)$. A direct calculation yields that 
then $\tilde{w}_0(\bx) \in L^2(W_{\!\boldsymbol{I}})$ can be solved by
\begin{equation} \label{derivation wexp eigenfunction w0}
\tilde{w}_0(\bx) = \ww_0 \phi_p(\boldsymbol{0}; \bx).
\end{equation}  
Since $\phi_p({\bk}; \bx)=\tilde{w}_0(\bx) + O(\|\bk\|)$ for sufficiently small $\|\bk\|$,  it is clear that $\ww_0=1$. Moreover as $\phi_p(\boldsymbol{0}; \bx) \in {\mathcal{H}^1_{\boldsymbol{0}}}(W_{\!\boldsymbol{I}})$, we have $\tilde{w}_0(\bx) \in {\mathcal{H}^1_{\boldsymbol{0}}}(W_{\!\boldsymbol{I}})$ as well.}

Next, the $O(\eps)$ statement reads 
{ 
\begin{multline*}
-\int_{W_{\!\boldsymbol{I}}}       \tilde{w}_1    \overline{\nabla \cdot \big[G\nabla\phi\big]} \ind \bx + \int_{W_{\!\boldsymbol{I}}}  \big[G  i  \hat{\bk}  \tilde{w}_0  \big] \cdot   \overline{\nabla\phi} \ind \bx + \int_{W_{\!\boldsymbol{I}}} \tilde{w}_0 {\nabla \cdot \big[G i \hat{\bk} \overline{\phi}\big] } \\
%\big[G \nabla \tilde{w}_0  \cdot i  \hat{\bk}\big]  \overline{   \phi} \ind \bx \\
=   \hat{\omega}_0^2  \int_{W_{\!\boldsymbol{I}}}  \rho  \tilde{w}_1 \overline{\phi} \ind \bx, \quad \forall \phi \in H^2_{\boldsymbol{0}}(W_{\!\boldsymbol{I}}).
\end{multline*}
}
This equation can be solved by
\begin{equation} \label{derivation wexp eigenfunction w1}
\tilde{ w}_1(\bx) = \bchi^{\mbox{\tiny{(1)}}} (\bx) \cdot i \hat{\bk}\hh w_0 + \bw_1 \cdot i\hat{\bk}\phi_p(\boldsymbol{0}; \bx),
\end{equation}
where $\bchi^{\mbox{\tiny{(1)}}}\in (H^1_{\boldsymbol{0}}(W_{\!\boldsymbol{I}}))^d$ is a \emph{zero-mean} ($\int_{W_{\!\boldsymbol{I}}} \rho\bchi^{\mbox{\tiny{(1)}}}  \overline{\phi_p(\boldsymbol{0}; \cdot)} \ind\bx=\boldsymbol{0}$), vector-valued function solving 
\begin{multline} \label{ComparisonEqnchi1}
\int_{W_{\!\boldsymbol{I}}}  \big( G (\nabla \bchi^{\mbox{\tiny{(1)}}} \!+\boldsymbol{I}\phi_p(\boldsymbol{0}; \cdot)\big) : \overline{\nabla\phi} \ind \bx - \int_{W_{\!\boldsymbol{I}}} \big(  G\boldsymbol{I} \nabla \phi_p(\boldsymbol{0}; \cdot)  \big) \overline{\phi} \ind \bx  \\
 = \hat{\omega}_0^2 \int_{W_{\!\boldsymbol{I}}}  \rho\bchi^{\mbox{\tiny{(1)}}}    \overline{ \phi} \ind \bx,  \quad \forall \phi \in H^1_{\boldsymbol{0}}(W_{\!\boldsymbol{I}}).
\end{multline}
{ We remark that the above equation for $\bchi^{\mbox{\tiny{(1)}}} $ is uniquely solvable since one can check that, the terms (which do not involve $\bchi^{\mbox{\tiny{(1)}}}$)  
$$\int_{W_{\!\boldsymbol{I}}}  \big( G \boldsymbol{I}\phi_p(\boldsymbol{0}; \cdot)\big) : \overline{\nabla\phi} \ind \bx - \int_{W_{\!\boldsymbol{I}}} \big(  G\boldsymbol{I} \nabla \phi_p(\boldsymbol{0}; \cdot)  \big) \overline{\phi} \ind \bx
$$ vanish when $\phi = \phi_p(\boldsymbol{0}; \cdot)$ (since $\phi_p(\boldsymbol{0}; \cdot)$ has a constant phase thanks to the assumption that $\omega_p^2(\boldsymbol{0})$ is a simple eigenvalue), i.e. the Fredholm alternative holds.
}

The $O(\eps^{2})$ contribution stemming from \eqref{eq:A1} reads
{ 
\begin{multline}\label{w2}
-\int_{W_{\!\boldsymbol{I}}}         \tilde{w}_2  \nabla \cdot \big(G \overline{\nabla\phi} \big) \ind \bx+
\int_{W_{\!\boldsymbol{I}}}  \big(G  i  \hat{\bk}  \tilde{w}_1  \big) \cdot   \overline{\nabla\phi} \ind \bx -\int_{W_{\!\boldsymbol{I}}}  \big[G \big(\nabla \tilde{w}_1 + i\hat{\bk}  \tilde{w}_0\big)  \cdot i  \hat{\bk}\big]  \overline{   \phi} \ind \bx \\
=   \int_{W_{\!\boldsymbol{I}}}  \rho   \big[ \hat{\omega}_0^2 \tilde{w}_2+  \hat{\omega}_2^2  \tilde{w}_0 \big] \overline{\phi} \ind \bx, \quad \forall \phi \in H^2_{\boldsymbol{0}}(W_{\!\boldsymbol{I}}).
\end{multline}
}
Taking $\phi(\bx)=\phi_p(\boldsymbol{0}; \bx)$, we find 
\begin{multline*}
\int_{W_{\!\boldsymbol{I}}}  \big[G   i  \hat{\bk}  \tilde{w}_1  \big] \cdot   \overline{\nabla\phi_p(\boldsymbol{0}; \cdot)} \ind \bx -\int_{W_{\!\boldsymbol{I}}}  \big[G \big(\nabla \tilde{w}_1  + i\hat{\bk}  \tilde{w}_0 \big)  \cdot i  \hat{\bk}\big]  \overline{  \phi_p(\boldsymbol{0}; \cdot)} \ind \bx ~~~~ \nonumber\\
=\; \hat{\omega}_2^2  \int_{W_{\!\boldsymbol{I}}}  \rho  \tilde{w}_0(\cdot) \overline{\phi_p(\boldsymbol{0}; \cdot)} \ind \bx.
\end{multline*}
On substituting~\eqref{derivation wexp eigenfunction w0}--\eqref{derivation wexp eigenfunction w1} into the above equation, all the terms involving $\bw_1$ vanish due to the fact that $\phi_p(\boldsymbol{0}; \cdot)$ has a constant phase. Accordingly, we obtain 
\begin{eqnarray} \label{ComparisonSecondOrderHomogenizationZerothOrderPDE}
\rho^{\mbox{\tiny{(0)}}} \hat{\omega}^2_2 =  -{\bmu}^{\mbox{\tiny{(0)}}} \!: (i\hat{\bk})^2,
\end{eqnarray}
where 
\begin{eqnarray} 
\rho^{\mbox{\tiny{(0)}}} &=&  {\alpha_p}\es \langle \rho  \phi_p(\boldsymbol{0}; \cdot) \overline{\phi_p(\boldsymbol{0}; \cdot) } \rangle, \label{SecondOrderHomogenizationZerothOrderCoefficients-0}\\
  {\bmu}^{\mbox{\tiny{(0)}}} &=& {\alpha_p}\es \big\langle\{G( \nabla  \bchi^{\mbox{\tiny{(1)}}}   + \boldsymbol{I}  \phi_p(\boldsymbol{0}; \cdot)  ) \overline{ \phi_p(\boldsymbol{0}; \cdot) }\} -
  \{G \bchi^{\mbox{\tiny{(1)}}} \otimes \overline{\nabla \phi_p(\boldsymbol{0}; \cdot)}\} \big\rangle, \label{SecondOrderHomogenizationZerothOrderCoefficients}
\end{eqnarray}
{and
\[
\alpha_p = \langle\phi_p(\boldsymbol{0}; \cdot) \overline{\phi_p(\boldsymbol{0};\cdot)}\rangle^{-1}. 
\]}

\subsection{First-order corrector}\label{Asymptotic eigenfunction dispersion relation subsection 2}

Let $\bchi^{\mbox{\tiny{(2)}}}\in (H^1_{\boldsymbol{0}}(W_{\!\boldsymbol{I}}))^{d\times d}$ denote a \emph{zero-mean} (i.e. $\int_{W_{\!\boldsymbol{I}}} \rho \bchi^{\mbox{\tiny{(2)}}} \overline{\phi_p(\boldsymbol{0}; \cdot)} \ind\bx=\boldsymbol{0}$), tensor-valued function  satisfying 

\begin{multline} \label{Comparisonchi2}
\int_{W_{\!\boldsymbol{I}}}  \big( G (\nabla \bchi^{\mbox{\tiny{(2)}}} \!+ \{\boldsymbol{I} \otimes \bchi^{\mbox{\tiny{(1)}}}\}' \big) : \overline{\nabla\phi} \ind \bx - \int_{W_{\!\boldsymbol{I}}} G(\{\nabla  \bchi^{\mbox{\tiny{(1)}}}\} + \boldsymbol{I} \phi_p(\boldsymbol{0};\cdot)) \overline{\phi} \ind \bx \\
 + \int_{W_{\!\boldsymbol{I}}} \frac{\rho}{\rho^{\mbox{\tiny{(0)}}}} \bmu^{\mbox{\tiny{(0)}}}  \phi_p(\boldsymbol{0};\cdot) \overline{\phi} \ind \bx \:=\: \hat{\omega}_0^2 \int_{W_{\!\boldsymbol{I}}}  \rho\bchi^{\mbox{\tiny{(2)}}}    \overline{ \phi} \ind \bx,  \quad \forall \phi \in H^1_{\boldsymbol{0}}(W_{\!\boldsymbol{I}}).
\end{multline}
{ It is directly verified that the above equation is uniquely solvable since one can again check that, the terms which do not involve $\bchi^{\mbox{\tiny{(2)}}}$ vanish when $\phi = \phi_p(\boldsymbol{0}; \cdot)$ (thanks to \eqref{SecondOrderHomogenizationZerothOrderCoefficients}), i.e. the Fredholm alternative holds.} With such definitions, a direct calculation yields that equation \eqref{w2} can be solved by 
\begin{equation}\label{derivation wexp eigenfunction w2}
\tilde{w}_2 (\bx) \,=\, \bchi^{\mbox{\tiny{(2)}}} (\bx) : (i \hat{\bk})^{2}\hh  w_0 \,+\, \big(\bchi^{\mbox{\tiny{(1)}}} (\bx) \cdot i\hat{\bk}\big)   \big(\bw_1 \cdot i \hat{\bk} \big)   \, +\, \bw_2 :  (i \hat{\bk})^2 \phi_p(\boldsymbol{0};\bx).
\end{equation}
Proceeding with the asymptotic analysis, the $O(\eps^3)$ contribution stemming from \eqref{eq:A1} is found as 
{ 
\begin{multline}\label{w3}
-\int_{W_{\!\boldsymbol{I}}}      \tilde{w}_3 \nabla \cdot \big(G  \overline{\nabla\phi} \big) \ind \bx
+
\int_{W_{\!\boldsymbol{I}}}  \big( G   i  \hat{\bk}  \tilde{w}_2  \big) \cdot   \overline{\nabla\phi} \ind \bx -\int_{W_{\!\boldsymbol{I}}}  \big[G \big(\nabla \tilde{w}_2 + i\hat{\bk}  \tilde{w}_1\big)  \cdot i  \hat{\bk}\big]  \overline{   \phi} \ind \bx \\
=   \int_{W_{\!\boldsymbol{I}}}  \rho   \big[ \hat{\omega}_0^2 \tilde{w}_3+  \hat{\omega}_2^2  \tilde{w}_1 \big] \overline{\phi} \ind \bx, \quad \forall \phi \in H^2_{\boldsymbol{0}}(W_{\!\boldsymbol{I}}).
\end{multline}
}
In a manner similar to earlier treatment, taking $\phi(\bx)=\phi_p(\boldsymbol{0}; \bx)$ we find  
\begin{multline*}
\int_{W_{\!\boldsymbol{I}}}  \big[G   i  \hat{\bk}  \tilde{w}_2  \big] \cdot   \overline{\nabla\phi_p(\boldsymbol{0}; \cdot)} \ind \bx -\int_{W_{\!\boldsymbol{I}}}  \big[G \big(\nabla \tilde{w}_2  + i\hat{\bk}  \tilde{w}_1 \big)  \cdot i  \hat{\bk}\big]  \overline{  \phi_p(\boldsymbol{0}; \cdot)} \ind \bx ~~~~ \nonumber\\
=\; \hat{\omega}_2^2  \int_{W_{\!\boldsymbol{I}}}  \rho  \tilde{w}_1(\cdot) \overline{\phi_p(\boldsymbol{0}; \cdot)} \ind \bx.
\end{multline*}
On substituting \eqref{wexp eigenfunction w1}--\eqref{wexp eigenfunction w2} into the above equation, all the terms involving $\bw_2$ vanish due to the fact that $\phi_p(\boldsymbol{0}; \cdot)$ has a constant phase. In this way, we obtain 
\begin{eqnarray} \label{ComparisonSecondOrderHomogenizationFirstOrderPDE-1}
\big( \bmu^{\mbox{\tiny{(1)}}}\!: (i \hat{\bk})^{3}  + {\brho}^{\mbox{\tiny{(1)}}} \!\cdot\! i\hat{\bk} ~\hat{\omega}^2_2 \big)w_0 \,+\, \big( {\bmu}^{\mbox{\tiny{(0)}}}\!: (i \hat{\bk})^{2}+ \rho^{\mbox{\tiny{(0)}}} \hat{\omega}^2_2 \big) \bw_1\cdot i\hat{\bk} ~=\: 0,
\end{eqnarray}
where 
\begin{eqnarray}  
\brho^{\mbox{\tiny{(1)}}} &=&  {\alpha_p}\es \langle \rho   \bchi^{\mbox{\tiny{(1)}}} \overline{\phi_p(\boldsymbol{0}; \cdot) } \rangle =\boldsymbol{0}, \label{SecondOrderHomogenizationFirstOrderCoefficients}\\
  {\bmu}^{\mbox{\tiny{(1)}}} &=& {\alpha_p}\es \big \langle \{G( \nabla  \bchi^{\mbox{\tiny{(2)}}}   + \boldsymbol{I}  \otimes \bchi^{\mbox{\tiny{(1)}}}   ) \overline{ \phi_p(\boldsymbol{0}; \cdot) } \} -
  \{G \bchi^{\mbox{\tiny{(2)}}}\otimes \overline{\nabla \phi_p(\boldsymbol{0}; \cdot) }\} \big \rangle. \label{SecondOrderHomogenizationFirstOrderCoefficients1}
\end{eqnarray}
\begin{remark} \label{rem3}
It can be shown that $  {\bmu}^{\mbox{\tiny{(1)}}} = \boldsymbol{0}$. Indeed, \eqref{ComparisonEqnchi1} with $\phi$ taken as the $k\ell$th component of $\overline{\bchi^{\mbox{\tiny{(2)}}}}$ yields
\begin{multline}  
\int_{W_{\!\boldsymbol{I}}}  \big( G (\nabla \bchi^{\mbox{\tiny{(1)}}} \!+\boldsymbol{I}\phi_p(\boldsymbol{0}; \cdot)\big) :  {\nabla\bchi^{\mbox{\tiny{(2)}}}_{k\ell} } \ind \bx - \int_{W_{\!\boldsymbol{I}}} \big(  G\boldsymbol{I} \nabla \phi_p(\boldsymbol{0}; \cdot)  \big) \bchi^{\mbox{\tiny{(2)}}}_{k\ell} \ind \bx  \\
 =\; \hat{\omega}_0^2 \int_{W_{\!\boldsymbol{I}}}  \rho\bchi^{\mbox{\tiny{(1)}}}    \bchi^{\mbox{\tiny{(2)}}}_{k\ell}  \ind \bx,  \label{mu1 vanish remark eqn 1}
\end{multline}
while \eqref{Comparisonchi2} with $\phi$ being the $j$th component of $\overline{\bchi^{\mbox{\tiny{(1)}}}_j}$ gives
\begin{eqnarray} 
&&\int_{W_{\!\boldsymbol{I}}}  \big( G (\nabla \bchi^{\mbox{\tiny{(2)}}} \!+ \{\boldsymbol{I} \otimes \bchi^{\mbox{\tiny{(1)}}}\}' \big) :  {\nabla \bchi^{\mbox{\tiny{(1)}}}_j} \ind \bx - \int_{W_{\!\boldsymbol{I}}} G(\{\nabla  \bchi^{\mbox{\tiny{(1)}}}\} + \boldsymbol{I} \phi_p(\boldsymbol{0};\cdot))  {\bchi^{\mbox{\tiny{(1)}}}_j} \ind \bx \nonumber \\
&& + \int_{W_{\!\boldsymbol{I}}} \frac{\rho}{\rho^{\mbox{\tiny{(0)}}}} \bmu^{\mbox{\tiny{(0)}}}  \phi_p(\boldsymbol{0};\cdot)  {\bchi^{\mbox{\tiny{(1)}}}_j} \ind \bx= \hat{\omega}_0^2 \int_{W_{\!\boldsymbol{I}}}  \rho\bchi^{\mbox{\tiny{(2)}}}     { \bchi^{\mbox{\tiny{(1)}}}_j}\ind \bx. \label{mu1 vanish remark eqn 2}
\end{eqnarray}
Taking the difference of~\eqref{mu1 vanish remark eqn 1} and~\eqref{mu1 vanish remark eqn 2}, we obtain 
\begin{multline*} 
 \int_{W_{\!\boldsymbol{I}}}  \big[ \{G( \nabla  \bchi^{\mbox{\tiny{(2)}}}   + \boldsymbol{I}  \otimes \bchi^{\mbox{\tiny{(1)}}}   )  { \phi_p(\boldsymbol{0}; \cdot) } \}- 
  \{G \bchi^{\mbox{\tiny{(2)}}}\otimes  {\nabla \phi_p(\boldsymbol{0}; \cdot) }\} \big] \ind \bx  \nonumber \\
  =\: \frac{1}{\rho^{\mbox{\tiny{(0)}}}} \Big\{ \big(\int_{W_{\!\boldsymbol{I}}} \rho   \bchi^{\mbox{\tiny{(1)}}}  {\phi_p(\boldsymbol{0}; \cdot) } \ind \bx \big)\otimes  {\bmu^{\mbox{\tiny{(0)}}}} \Big\}. 
\end{multline*}
Since $ {\phi_p(\boldsymbol{0}; \cdot) }$ has a constant phase, the right-hand side of the above equation {is proportional to} $\brho^{\mbox{\tiny{(1)}}}=\boldsymbol{0}$ and thus vanishes. By the same (constant-phase) argument, the left-hand side is a constant multiplication of ${\bmu}^{\mbox{\tiny{(1)}}} $ and hence
\begin{equation}\label{w1zero}
  {\bmu}^{\mbox{\tiny{(1)}}} =\boldsymbol{0}.
\end{equation}
\end{remark}

From Remark~\ref{rem3} and~\eqref{ComparisonSecondOrderHomogenizationFirstOrderPDE-1}, we obtain the balance statement 
\begin{eqnarray} \label{ComparisonSecondOrderHomogenizationFirstOrderPDE}
\big( {\bmu}^{\mbox{\tiny{(0)}}}\!: (i \hat{\bk})^{2}+ \rho^{\mbox{\tiny{(0)}}} \hat{\omega}^2_2 \big) \bw_1\cdot i\hat{\bk} ~=\: 0,
\end{eqnarray}
that is satisfied by constant vector $\bw_1$ thanks to~\eqref{ComparisonSecondOrderHomogenizationZerothOrderPDE}.

\subsection{Second-order corrector}  \label{Asymptotic eigenfunction dispersion relation subsection 3}

Let $\bchi^{\mbox{\tiny{(3)}}}\in (H^1_{\boldsymbol{0}}(W_{\!\boldsymbol{I}}))^{d\times d \times d}$ be the \emph{zero-mean} (i.e. $\int_{W_{\!\boldsymbol{I}}} \rho \bchi^{\mbox{\tiny{(3)}}} \overline{\phi_p(\boldsymbol{0}; \cdot)} \ind\bx=\boldsymbol{0}$)  tensor-valued function  satisfying 
\begin{multline}   \label{Comparisonchi3}
\int_{W_{\!\boldsymbol{I}}}  \big( G (\nabla \bchi^{\mbox{\tiny{(3)}}} \!+ \{\boldsymbol{I} \otimes \bchi^{\mbox{\tiny{(2)}}}\}' \big) : \overline{\nabla\phi} \ind \bx - \int_{W_{\!\boldsymbol{I}}} G\big(\{\nabla  \bchi^{\mbox{\tiny{(2)}}}\} +\{\boldsymbol{I} \otimes \bchi^{\mbox{\tiny{(1)}}}\} \big)\overline{\phi} \ind \bx \\
 + \int_{W_{\!\boldsymbol{I}}} \frac{1}{\rho^{\mbox{\tiny{(0)}}}}\{\rho \bchi^{\mbox{\tiny{(1)}}} \otimes \bmu^{\mbox{\tiny{(0)}}}\}\overline{\phi} \ind \bx \:=\: \hat{\omega}_0^2 \int_{W_{\!\boldsymbol{I}}}  \rho\bchi^{\mbox{\tiny{(3)}}}    \overline{ \phi} \ind \bx,  \quad \forall \phi \in H^1_{\boldsymbol{0}}(W_{\!\boldsymbol{I}}).
\end{multline}
{ It is directly verified that the above equation is uniquely solvable since one can again check that, the terms which do not involve $\bchi^{\mbox{\tiny{(3)}}}$ vanish when $\phi = \phi_p(\boldsymbol{0}; \cdot)$ (thanks to \eqref{SecondOrderHomogenizationFirstOrderCoefficients},\eqref{SecondOrderHomogenizationFirstOrderCoefficients1}, and \eqref{w1zero}), i.e. the Fredholm alternative holds.} With such definitions, a direct calculation yields that equation \eqref{w3} can be solved by 
\begin{eqnarray} \label{derivation wexp eigenfunction w3}
\tilde{w}_3 (\bx) &=& \bchi^{\mbox{\tiny{(3)}}} (\bx) : (i \hat{\bk})^{3}\hh  w_0 \,+\,  \big(\bchi^{\mbox{\tiny{(2)}}} (\bx) : (i \hat{\bk})^{2} \big)\hh  \bw_1\cdot i \hat{\bk}  \nonumber\\ 
&&\,+\, \big(\bchi^{\mbox{\tiny{(1)}}} (\bx) \cdot i\hat{\bk}\big)   \big(\bw_2 \cdot i \hat{\bk}^2 \big)   
\,+\, \bw_3 :  (i \hat{\bk})^3 \phi_p(\boldsymbol{0};\cdot).
\end{eqnarray}

To complete the second-order analysis, one must consider the $O(\eps^4)$ contribution to~\eqref{eq:A1} which reads 
{ 
\begin{multline*} 
-\int_{W_{\!\boldsymbol{I}}}     \tilde{w}_4  \nabla \cdot \big(G  \overline{\nabla\phi}\big) \ind \bx 
+
\int_{W_{\!\boldsymbol{I}}}  \big(G   i  \hat{\bk}  \tilde{w}_3 \big) \cdot   \overline{\nabla\phi} \ind \bx 
-\int_{W_{\!\boldsymbol{I}}}  \big[G \big(\nabla \tilde{w}_3 + i\hat{\bk}  \tilde{w}_2\big)  \cdot i  \hat{\bk}\big]  \overline{   \phi} \ind \bx ~~~~ \nonumber\\
=\, \int_{W_{\!\boldsymbol{I}}}  \rho   \big[ \hat{\omega}_0^2 \tilde{w}_4+ \hat{\omega}_2^2 \tilde{w}_2+  \hat{\omega}_4^2  \tilde{w}_0 \big] \overline{\phi} \ind \bx, \quad \forall \phi \in H^2_{\boldsymbol{0}}(W_{\!\boldsymbol{I}}).
\end{multline*}
}
Taking $\phi(\bx)=\phi_p(\boldsymbol{0}; \bx)$ in the above equation yields 
\begin{multline*}
\int_{W_{\!\boldsymbol{I}}}  \big[G   i  \hat{\bk}  \tilde{w}_3  \big] \cdot   \overline{\nabla\phi_p(\boldsymbol{0}; \cdot)} \ind \bx -\int_{W_{\!\boldsymbol{I}}}  \big[G \big(\nabla \tilde{w}_3  + i\hat{\bk}  \tilde{w}_2 \big)  \cdot i  \hat{\bk}\big]  \overline{  \phi_p(\boldsymbol{0}; \cdot)} \ind \bx ~~~~ \nonumber\\
=\, \int_{W_{\!\boldsymbol{I}}}   \rho   \big[  \hat{\omega}_2^2 \tilde{w}_2+  \hat{\omega}_4^2  \tilde{w}_0 \big]\overline{\phi_p(\boldsymbol{0}; \cdot)} \ind \bx.
\end{multline*}
{ On inserting~\eqref{derivation wexp eigenfunction w0},  \eqref{derivation wexp eigenfunction w2}, and \eqref{derivation wexp eigenfunction w3} }into the above equation, we find that all the terms involving $\bw_1$ must vanish because  $\phi_p(\boldsymbol{0}; \cdot)$ has a constant phase, while all the terms containing $\bw_3$ are necessarily trivial due to  \eqref{SecondOrderHomogenizationFirstOrderCoefficients} and \eqref{w1zero}. As a result, we obtain 
\begin{equation} \label{ComparisonSecondOrderHomogenizationSecondOrderPDE}
\big( {\bmu}^{\mbox{\tiny{(2)}}} \!: (i\hat{\bk})^4 + {\brho}^{\mbox{\tiny{(2)}}} \!: (i\hat{\bk})^2  ~  \hat{\omega}^2_2  + \rho^{\mbox{\tiny{(0)}}} \hat{\omega}_4^2 \big) w_0 +
\big( {\bmu}^{\mbox{\tiny{(0)}}} \!: (i\hat{\bk})^2 + \rho^{\mbox{\tiny{(0)}}} \hat{\omega}_2^2 \big) \bw_2 : (i\hat{\bk})^2  =0,~~~~
\end{equation} 
where 
\begin{eqnarray} 
\brho^{\mbox{\tiny{(2)}}} &=&  {\alpha_p}\es   \langle \rho   \bchi^{\mbox{\tiny{(2)}}} \overline{\phi_p(\boldsymbol{0}; \cdot) } \rangle=\boldsymbol{0}, \label{SecondOrderHomogenizationSecondOrderCoefficients-0}\\
  {\bmu}^{\mbox{\tiny{(2)}}} &=& {\alpha_p}\es  \big\langle \{G( \nabla  \bchi^{\mbox{\tiny{(3)}}}   + \boldsymbol{I}  \otimes \bchi^{\mbox{\tiny{(2)}}}   ) \overline{ \phi_p(\boldsymbol{0}; \cdot) } \} -
  \{G \bchi^{\mbox{\tiny{(3)}}}\otimes \overline{\nabla \phi_p(\boldsymbol{0}; \cdot) }\} \big\rangle.  \label{SecondOrderHomogenizationSecondOrderCoefficients}
\end{eqnarray}
From~\eqref{ComparisonSecondOrderHomogenizationSecondOrderPDE} and~\eqref{ComparisonSecondOrderHomogenizationZerothOrderPDE}, we also find that 
\begin{eqnarray*}
{\bmu}^{\mbox{\tiny{(2)}}} \!: (i\hat{\bk})^4 + \rho^{\mbox{\tiny{(0)}}} \hat{\omega}_4^2 =0. 
\end{eqnarray*}

\subsection{Proof of Lemma \ref{asymptotic main phi_0}}
Now we are ready to prove Lemma \ref{asymptotic main phi_0}.

\begin{proof} \label{Asymptotic eigenfunction dispersion relation subsection 4}
{    According to \cite{wilcox1978theory}, $\phi_p({\bk};\cdot)$ and $\omega_p^2({\bk})$ are holomorphic in $\bk$ except a null set where the multiplicity of $\omega_p^2({\bk})$ changes (see also \cite[pp 341]{lions2011asymptotic}).}
{  Since $\omega_p^2({\boldsymbol{0}})$ under our consideration is a simple eigenvalue (such that its multiplicity does not change in some small neighborhood of $\omega_p^2({\boldsymbol{0}})$, and accordingly $\phi_p({\bk};\cdot)$ and $\omega_p^2({\bk})$ are holomorphic in $\bk$ in such a small neighborhood) have convergent Taylor series, and the convergence for the Taylor series of $\phi_p({\bk};\cdot)$ is in $C(\overline{W_{\boldsymbol{I}}})$ \cite{wilcox1978theory} and hence in $L^2(W_{\boldsymbol{I}})$. As before, we let $\bk=\eps \hat{\bk}$ where $\eps$ is sufficiently small and $\hat{\bk}=O(1)$. Since $\omega_p({\boldsymbol{0}})$ is simple by premise, one easily verifies that $\omega_p(\eps \hat{\bk}) = \omega_p(-\eps \hat{\bk})$ for sufficiently small $\eps$; then $\frac{\partial^q}{\partial {\eps^q}}\omega_0^2(\eps \hat{\bk})=0$ for any odd $q$. As a result, we obtain the series expansion as in equations \eqref{wexp 1} and~\eqref{wexp 2}.} 

Now we proceed with the results in Sections~\ref{Asymptotic eigenfunction dispersion relation subsection 1}--\ref{Asymptotic eigenfunction dispersion relation subsection 3}. It directly follows { that~\eqref{wexp eigenfunction w0}--\eqref{wexp eigenfunction w3} apply with~$w_0=1$, and}  
\begin{eqnarray} 
\hat{\omega}_0^2=\omega_p^2(\boldsymbol{0}),\hspace{0.5cm}\hat{\omega}_2^2 &=&  -\frac{{\bmu}^{\mbox{\tiny{(0)}}}}{\rho^{\mbox{\tiny{(0)}}}}\!: (i\hat{\bk})^2, \hspace{0.5cm}\hat{\omega}_4^2 =  -\frac{{\bmu}^{\mbox{\tiny{(2)}}}}{\rho^{\mbox{\tiny{(0)}}}}\!: (i\hat{\bk})^4. 
\end{eqnarray}

Now we {demonstrate~\eqref{necessary w1} and~\eqref{necessary w2}} by the normalization of eigenfunctions. Specifically, the normalization of $\phi_p(\eps\hat{\bk};\bx)$ yields
\begin{eqnarray*}
\int_{\mathbb{T}^d} \rho(\bx) \phi_p(\eps\hat{\bk};\bx) \overline{\phi_p(\eps\hat{\bk};\bx)} \ind \bx \:=\: 1.
\end{eqnarray*}
From the asymptotic expansion of $\phi_p$ according to~\eqref{wexp 1}, we find that the $O(\eps)$ term gives
\begin{eqnarray*}
i \hat{\bk} \cdot (w_0 \overline{\bw}_1-\overline{w}_0 \bw_1) \rho^{\mbox{\tiny{(0)}}}=i \hat{\bk}  |w_0|^2   \cdot \big(  {\brho}^{\mbox{\tiny{(1)}}}  - \overline{ {\brho}^{\mbox{\tiny{(1)}}} } \big) = 0,
\end{eqnarray*}
and hence
\begin{eqnarray*}
i \hat{\bk} \cdot ( \overline{\bw}_1- \bw_1)=0.
\end{eqnarray*}
Similarly, the $O(\eps^2)$ term gives
\begin{eqnarray*}
 (i\hat{\bk})^2:\big[ (\bw_2  + \overline{\bw}_2  ) - (\bw_1 \otimes \overline{\bw}_1) -     \langle \rho \bchi^{(1)} \otimes \overline{\bchi^{(1)}} \rangle  \big]=0, 
\end{eqnarray*}
which completes the proof.
\end{proof}
\subsection{Property}
The following Proposition is needed in the derivation of the main result in the next section.
\begin{proposition} \label{Prop Section Higher order eqn 1}
The following holds 
\begin{eqnarray}
&& \Big\|\langle \rho {\phi_p(\boldsymbol{0}; \cdot)} \overline{\phi_p( \eps \hat{\bk};{\cdot})} \rangle \phi_p(\eps \hat{\bk};\cdot) \nonumber\\
&&\:  -\Big[\phi_p(\boldsymbol{0}; \cdot)  + \eps   i \hat{\bk}  \cdot \bchi^{\mbox{\tiny{(1)}}}  + \eps^2 (i \hat{\bk})^2 : \big(   \langle \rho \bchi^{(1)}(\cdot) \otimes \overline{\bchi^{(1)}} \rangle  \phi_p(\boldsymbol{0}; \cdot)+  \bchi^{\mbox{\tiny{(2)}}}(\cdot) \big) \Big] \Big\|_{L^2(W_{\boldsymbol{I}})} \nonumber \\
&=& O(\epsilon^3) \label{Section Higher order eqn 1}
\end{eqnarray}
\end{proposition}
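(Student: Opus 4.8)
The plan is to multiply the two analytic-in-$\eps$ factors and match their Taylor expansions term by term up to order $\eps^2$, showing that the $\eps^0$, $\eps^1$ and $\eps^2$ contributions coincide exactly with the bracketed expression, so that the whole difference is a genuine $O(\eps^3)$ remainder in $L^2(W_{\boldsymbol{I}})$. Throughout I abbreviate $\Phi := \phi_p(\boldsymbol{0};\cdot)$ and use the expansion \eqref{wexp 1} with $w_0=1$ together with \eqref{wexp eigenfunction w1}--\eqref{wexp eigenfunction w2}.

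First I would expand the scalar prefactor. Inserting \eqref{wexp 1} into $\langle \rho\, \Phi\, \overline{\phi_p(\eps\hat{\bk};\cdot)}\rangle$, I use the normalization $\langle \rho\, \Phi\, \overline{\Phi}\rangle = 1$ together with the identity $\langle \rho\, \Phi\, \overline{\bchi^{(j)}}\rangle = \overline{\langle \rho\, \bchi^{(j)}\, \overline{\Phi}\rangle} = \boldsymbol{0}$, which follows from the reality of $\rho$ and the zero-mean property of $\bchi^{(1)}$ and $\bchi^{(2)}$. After these cancellations only the $\overline{\bw}_1,\overline{\bw}_2$ pieces survive, giving
\[
\langle \rho\, \Phi\, \overline{\phi_p(\eps\hat{\bk};\cdot)}\rangle = 1 - \eps\,(i\hat{\bk}\cdot\overline{\bw}_1) + \eps^2\,\big((i\hat{\bk})^2 : \overline{\bw}_2\big) + O(\eps^3).
\]

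Next I would multiply this scalar series against $\phi_p(\eps\hat{\bk};\cdot) = \Phi + \eps\tilde{w}_1 + \eps^2\tilde{w}_2 + O(\eps^3)$ and collect powers of $\eps$. The $\eps^0$ term is simply $\Phi$. At order $\eps^1$ the combination is $\tilde{w}_1 - (i\hat{\bk}\cdot\overline{\bw}_1)\,\Phi$; substituting \eqref{wexp eigenfunction w1} and invoking \eqref{necessary w1}, $i\hat{\bk}\cdot(\bw_1-\overline{\bw}_1)=0$, kills the $\Phi$-proportional part and leaves exactly $i\hat{\bk}\cdot\bchi^{(1)}$. At order $\eps^2$ the combination is $\tilde{w}_2 - (i\hat{\bk}\cdot\overline{\bw}_1)\tilde{w}_1 + \big((i\hat{\bk})^2 : \overline{\bw}_2\big)\Phi$; here the terms carrying $\bchi^{(1)}$ again cancel by \eqref{necessary w1}, the term carrying $\bchi^{(2)}$ reproduces $(i\hat{\bk})^2:\bchi^{(2)}$, and the $\Phi$-proportional coefficient collapses to $(i\hat{\bk})^2:\big[(\bw_2+\overline{\bw}_2)-\bw_1\otimes\overline{\bw}_1\big]$. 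The second normalization identity \eqref{necessary w2} rewrites this last coefficient precisely as $(i\hat{\bk})^2:\langle\rho\,\bchi^{(1)}\otimes\overline{\bchi^{(1)}}\rangle$, matching the bracketed expression.

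Having matched all three leading orders, I would finish by controlling the remainder. Since $\omega_p^2(\boldsymbol{0})$ is simple, Lemma \ref{asymptotic main phi_0} supplies a Taylor series for $\phi_p(\eps\hat{\bk};\cdot)$ converging in $L^2(W_{\boldsymbol{I}})$ (indeed uniformly, by \cite{wilcox1978theory}), so its truncation after the $\eps^2$ term has $L^2$-error $O(\eps^3)$; likewise the scalar prefactor is analytic in $\eps$ with an $O(\eps^3)$ Taylor remainder. The product of the two expansions, each term $\tilde{w}_n$ lying in $L^2(W_{\boldsymbol{I}})$, therefore differs from its order-$\eps^2$ truncation by $O(\eps^3)$ in $L^2$, which is the claim. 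The main obstacle is bookkeeping: keeping the complex-conjugate and tensor-contraction structure straight so that \eqref{necessary w1}--\eqref{necessary w2} are applied at exactly the right orders; once the algebra is organized as above the cancellations are forced and the estimate is routine.
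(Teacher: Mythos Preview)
Your proof is correct and follows essentially the same approach as the paper: expand the scalar prefactor $\langle \rho\,\phi_p(\boldsymbol{0};\cdot)\,\overline{\phi_p(\eps\hat{\bk};\cdot)}\rangle$ via \eqref{wexp 1} and the zero-mean properties of $\bchi^{(1)},\bchi^{(2)}$, multiply against the $L^2$-convergent Taylor series of $\phi_p(\eps\hat{\bk};\cdot)$, and invoke the normalization identities \eqref{necessary w1}--\eqref{necessary w2} to collapse the order-$\eps$ and order-$\eps^2$ coefficients to the bracketed expression. The only organizational difference is that you simplify the scalar factor before multiplying, whereas the paper carries the unsimplified product (its displayed expression after \eqref{Section Higher order eqn 02}) and applies \eqref{necessary w0}--\eqref{necessary w2} at the end; the ingredients and logic are identical.
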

\begin{proof}
From the asymptotic expansion of $\phi_p(\eps \hat{\bk};\cdot)$ in { Lemma~\ref{asymptotic main phi_0}}, we can directly obtain 
\begin{eqnarray}
&&\Big\|  \phi_p(\eps \hat{\bk};\cdot) - \Big[ w_0\phi_p(\boldsymbol{0}; \cdot) 
+  \eps \bchi^{\mbox{\tiny{(1)}}}   \cdot i \hat{\bk}\hh w_0 + \eps \bw_1 \cdot i\hat{\bk}\phi_p(\boldsymbol{0}; \cdot) + \eps^2  \bchi^{\mbox{\tiny{(2)}}}   \!:\! (i \hat{\bk})^{2}\hh  w_0 \, \nonumber\\
&& +\, \eps^2\bchi^{\mbox{\tiny{(1)}}}  \otimes \bw_1 \!:\! (i \hat{\bk})^2   \, +\, \eps^2\bw_2 \!:\!  (i \hat{\bk})^2 \phi_p(\boldsymbol{0}; \cdot) \Big] \Big\|_{L^2(W_{\boldsymbol{I}})} = O(\epsilon^3), \label{Section Higher order eqn 01}\\
&&\Big| \langle \rho {\phi_p(\boldsymbol{0}; \cdot)} \overline{\phi_p( \eps \hat{\bk};{\cdot})} \rangle-\langle \rho {\phi_p(\boldsymbol{0}; \cdot)} \overline{\phi_p( \boldsymbol{0};{\cdot})} \rangle   \big( \overline{w}_0 + \eps \overline{\bw_1 \cdot i \hat{\bk}} + \eps^2 \overline{\bw_2 \cdot (i \hat{\bk})^2 } \big)  \Big|\nonumber\\
&& = O(\epsilon^3).\label{Section Higher order eqn 02}
\end{eqnarray}

Therefore we can derive that
\begin{eqnarray*}
&&\bigg\| \langle \rho {\phi_p(\boldsymbol{0}; \cdot)} \overline{\phi_p( \eps \hat{\bk};{\cdot})} \rangle \phi_p(\eps \hat{\bk};\cdot) -\Big[\langle \rho {\phi_p(\boldsymbol{0}; \cdot)} \overline{\phi_p( \boldsymbol{0};{\cdot})} \rangle   \big( \overline{w}_0 + \eps \overline{\bw_1 \cdot i \hat{\bk}} + \eps^2 \overline{\bw_2 \cdot (i \hat{\bk})^2 } \big) \Big] \\
&&\,\,\cdot \Big[ w_0\phi_p(\boldsymbol{0}; \cdot) +  \eps \bchi^{\mbox{\tiny{(1)}}}   \cdot i \hat{\bk}\hh w_0 + \eps \bw_1 \cdot i\hat{\bk}\phi_p(\boldsymbol{0}; \cdot) \\
&&\hspace{1cm}+ \eps^2  \bchi^{\mbox{\tiny{(2)}}}   \!:\! (i \hat{\bk})^{2}\hh  w_0 \,+\, \eps^2\bchi^{\mbox{\tiny{(1)}}}  \otimes \bw_1 \!:\! (i \hat{\bk})^2   \, +\, \eps^2\bw_2 \!:\!  (i \hat{\bk})^2 \phi_p(\boldsymbol{0}; \cdot) \Big] \bigg\|_{L^2(W_{\boldsymbol{I}})} = O(\epsilon^3).
\end{eqnarray*}
The complicated term (second one inside $\|\cdot\|$) in the above equation can be simplified by a direct calculation as follows
\begin{multline*}
\Big[\langle \rho {\phi_p(\boldsymbol{0}; \cdot)} \overline{\phi_p( \boldsymbol{0};{\cdot})} \rangle   \big( \overline{w}_0 + \eps \overline{\bw_1 \cdot i \hat{\bk}} + \eps^2 \overline{\bw_2 \cdot (i \hat{\bk})^2 } \big) \Big] \cdot \Big[ w_0\phi_p(\boldsymbol{0}; \cdot) \\
+  \eps \bchi^{\mbox{\tiny{(1)}}}   \cdot i \hat{\bk}\hh w_0 + \eps \bw_1 \cdot i\hat{\bk}\phi_p(\boldsymbol{0}; \cdot) 
+ \eps^2  \bchi^{\mbox{\tiny{(2)}}}   \!:\! (i \hat{\bk})^{2}\hh  w_0 \,+\, \eps^2\bchi^{\mbox{\tiny{(1)}}}  \otimes \bw_1 \!:\! (i \hat{\bk})^2   \, +\, \eps^2\bw_2 \!:\!  (i \hat{\bk})^2 \phi_p(\boldsymbol{0}; \cdot) \Big] \\
=  |w_0|^2 \phi_p(\boldsymbol{0}; \cdot)  + \eps    \Big( |w_0|^2\bchi^{\mbox{\tiny{(1)}}}  \cdot  i \hat{\bk} +  (-w_0 \overline{\bw}_1 +\overline{w}_0 \bw_1)\cdot i \hat{\bk}  \phi_p(\boldsymbol{0}; \cdot) \Big) \\
+ \eps^2 (i \hat{\bk})^2 \!:\! \Big(   (\bw_2 \overline{w}_0 + \overline{\bw}_2 w_0  )\phi_p(\boldsymbol{0}; \cdot)-\bw_1 \otimes \overline{\bw}_1 \phi_p(\boldsymbol{0}; \cdot)+ (-w_0 \overline{\bw}_1 +\overline{w}_0 \bw_1)\otimes  \bchi^{\mbox{\tiny{(1)}}}  + |w_0|^2  \bchi^{\mbox{\tiny{(2)}}} \Big).
\end{multline*}
Using the necessary conditions \eqref{necessary w0} -- \eqref{necessary w2}, we can simplify the above equation to prove equation \eqref{Section Higher order eqn 1}.
\end{proof}

\section{Main result of the second-order homogenization} \label{Higher order U}
%-----------------------------------------------------------------------------------------------
%{ 
%{We recall that the driving frequency ${ \Omega_{\epsilon}}$, specified by~\eqref{bg2}--\eqref{bg3} and featuring parameter $\hat{\Omega}=O(1)$, belongs to a band gap}; $(\bk,\omega_p(\bk))$ is the $p$th branch of the dispersion relationship, and $\phi_p(\boldsymbol{\bk}; \cdot)$ is the corresponding eigenfunction given by \eqref{eigenfunction}; the effective coefficients $\rho^{\mbox{\tiny{(0)}}}$ and ${\bmu}^{\mbox{\tiny{(0)}}}$ are given by \eqref{SecondOrderHomogenizationZerothOrderCoefficients-0} and \eqref{SecondOrderHomogenizationZerothOrderCoefficients}; $\brho^{(2)}$ and ${\bmu}^{\mbox{\tiny{(2)}}}$ are given by \eqref{SecondOrderHomogenizationSecondOrderCoefficients-0} and \eqref{SecondOrderHomogenizationSecondOrderCoefficients}, and the tensor-valued functions $ \bchi^{\mbox{\tiny{(1)}}} $,$ \bchi^{\mbox{\tiny{(2)}}} $, and $ \bchi^{\mbox{\tiny{(3)}}} $ are given respectively by \eqref{ComparisonEqnchi1}, \eqref{Comparisonchi2}, and \eqref{Comparisonchi3}.}

Now we are ready to derive the asymptotic model for~$U_\epsilon$. 
\begin{theorem} \label{finalthm}
Let $U_{\epsilon} $ be the solution to \eqref{PDE fast}, where the source term $f_\epsilon$ is given by Assumption \ref{assumption f}, and let the driving frequency ${ \Omega_{\epsilon}}$ reside within a band gap according to~\eqref{bg2}--\eqref{bg3}. Then we have
\begin{eqnarray*}
\|{  U_{\epsilon} }-U_0\|_{L^2(\mathbb{R}^d)} = O(\eps),~~\|{  U_{\epsilon} }-U_1\|_{L^2(\mathbb{R}^d)} = O(\eps^2),~~\|{  U_{\epsilon} }-U_2\|_{L^2(\mathbb{R}^d)} = O(\eps^3),
\end{eqnarray*}
where the leading-order $U_0$, first-order $U_1$, and second-order $U_2$ asymptotic approximations  are given respectively by \eqref{Main leading-order U0}, \eqref{Main first-order U1}, and \eqref{Main second-order U2} with
\begin{eqnarray} 
U_0(\br) 
&:=&  \phi_p(\boldsymbol{0}; \br/\eps) \,  W_0({\br}), \label{Main leading-order U0} \\
U_1(\br) &:=& \phi_p(\boldsymbol{0}; \br/\eps) W_0( \br) +  \eps \bchi^{\mbox{\tiny{(1)}}}(\br/\eps)  \cdot \nabla_{\br}W_0(\br).  \label{Main first-order U1}\\
U_2(\br) &:=&   \phi_p(\boldsymbol{0}; \br/\eps) W_2(\br) +   \eps \bchi^{\mbox{\tiny{(1)}}}(\br/\eps)   \cdot \nabla_{\br}W_2(\br) \nonumber \\
&& +  \eps^2 \Big(   \langle \rho \bchi^{(1)} \otimes \overline{\bchi^{(1)}} \rangle  \phi_p(\boldsymbol{0}; \br/\eps)+  \bchi^{\mbox{\tiny{(2)}}}(\br/\eps) \Big) : \nabla^2_{\br}W_2(\br), \label{Main second-order U2}
\end{eqnarray}
and $W_0$ and $W_2$ are given by \eqref{Main W0} and \eqref{Main W2} respectively,
\begin{eqnarray}
&&W_0(\br) := \frac{1}{(2\pi)^{d/2}} \int_{\mathbb{R}^d}  \frac{     F(\hat{\bk}) e^{i \hat{\bk} \cdot   \br}  }{-\frac{{\bmu}^{\mbox{\tiny{(0)}}}}{\rho^{\mbox{\tiny{(0)}}}}\!: (i\hat{\bk})^2  -{ \sigma}{\hat{\Omega}}^2}  \ind \hat{\bk}, \label{Main W0}\\
&&W_2(\br) := \frac{1}{(2\pi)^{d/2}}  
%\nonumber \\
%&& \hspace{1cm} \times
\int_{\mathbb{R}^d}    \frac{    F(\hat{\bk}) e^{i  \hat{\bk} \cdot   \br}  }{-\frac{{\bmu}^{\mbox{\tiny{(0)}}}}{\rho^{\mbox{\tiny{(0)}}}}\!: (i\hat{\bk})^2 -\eps^2 \frac{{\bmu}^{\mbox{\tiny{(2)}}}}{\rho^{\mbox{\tiny{(0)}}}}\!: (i\hat{\bk})^4 -{ \sigma}{\hat{\Omega}}^2}  \ind \hat{\bk}. ~~~~~~~~~~~\label{Main W2}
\end{eqnarray}
Here $(\bk,\omega_p(\bk))$ is the $p$th branch of the dispersion relationship, and $\phi_p(\boldsymbol{\bk}; \cdot)$ is the corresponding eigenfunction given by \eqref{eigenfunction}; the effective coefficients $\rho^{\mbox{\tiny{(0)}}}$ and ${\bmu}^{\mbox{\tiny{(0)}}}$ are given by \eqref{SecondOrderHomogenizationZerothOrderCoefficients-0} and \eqref{SecondOrderHomogenizationZerothOrderCoefficients};  ${\bmu}^{\mbox{\tiny{(2)}}}$ are given by \eqref{SecondOrderHomogenizationSecondOrderCoefficients}, and the tensor-valued functions $ \bchi^{\mbox{\tiny{(1)}}} $,$ \bchi^{\mbox{\tiny{(2)}}} $, and $ \bchi^{\mbox{\tiny{(3)}}} $ are given respectively by \eqref{ComparisonEqnchi1}, \eqref{Comparisonchi2}, and \eqref{Comparisonchi3}.
\end{theorem}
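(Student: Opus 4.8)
The plan is to reduce everything, via Theorem~\ref{Thm uothers no contri}, to a comparison with the single branch contribution $\Up$. Since $\|{U_\epsilon}-\Up\|_{L^2(\mathbb{R}^d)}\leqslant C\eps^3\|F\|_{L^2(\mathbb{R}^d)}$, the triangle inequality shows it suffices to establish $\|\Up-U_0\|_{L^2(\mathbb{R}^d)}=O(\eps)$, $\|\Up-U_1\|_{L^2(\mathbb{R}^d)}=O(\eps^2)$, and $\|\Up-U_2\|_{L^2(\mathbb{R}^d)}=O(\eps^3)$. I would start from the representation~\eqref{Section U drive eqn 1}, writing the integrand over $Y$ as $F(\hat\bk)\es e^{i\hat\bk\cdot\br}$ multiplied by an $\boldsymbol{I}$-periodic-in-$\bx$ \emph{fiber profile}
\[
G_{\Up}(\hat\bk;\bx) := \frac{\eps^2\,\langle\rho\phi_p(\boldsymbol{0};\cdot)\overline{\phi_p(\eps\hat\bk;\cdot)}\rangle\,\phi_p(\eps\hat\bk;\bx)}{\omega_p^2(\eps\hat\bk)-\omega^2}.
\]
Each approximant shares this structure: using $\nabla^m_{\br}[F(\hat\bk)e^{i\hat\bk\cdot\br}]=(i\hat\bk)^m F(\hat\bk)e^{i\hat\bk\cdot\br}$ and the fact that $\mathrm{supp}\,F\subset Y$ extends the $\mathbb{R}^d$-integrals in \eqref{Main W0}--\eqref{Main W2} back to $Y$, equations~\eqref{Main leading-order U0}--\eqref{Main second-order U2} give $U_j(\br)=(2\pi)^{-d/2}\int_Y G_{U_j}(\hat\bk;\br/\eps)F(\hat\bk)e^{i\hat\bk\cdot\br}\ind\hat\bk$ with $\boldsymbol{I}$-periodic profiles $G_{U_j}$ read off directly from those formulas.

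The crux is a Parseval-type identity converting the macroscopic $L^2(\mathbb{R}^d)$ norm of such a two-scale integral into a fiberwise one. For any $\boldsymbol{I}$-periodic profile $G(\hat\bk;\cdot)\in L^2(W_{\!\boldsymbol{I}})$, setting $V(\br):=(2\pi)^{-d/2}\int_Y G(\hat\bk;\br/\eps)F(\hat\bk)e^{i\hat\bk\cdot\br}\ind\hat\bk$, I would prove
\[
\|V\|_{L^2(\mathbb{R}^d)}^2 = \int_Y |F(\hat\bk)|^2\,\|G(\hat\bk;\cdot)\|_{L^2(W_{\!\boldsymbol{I}})}^2\ind\hat\bk .
\]
This is a direct application of the Bloch isometry of Lemma~\ref{Bloch isomorphism}: after the changes of variables $\br=\eps\bx$ and $\bk=\eps\hat\bk$, one checks that $V(\eps\cdot)$ equals $\mathcal{J}^{-1}_{\mathbb{R}^d}$ of the fiber field $\eps^{-d}e^{i\bk\cdot\bx}G(\bk/\eps;\bx)F(\bk/\eps)\mathbf{1}_{\eps Y}(\bk)$, which is $\bk$-quasiperiodic in $\bx$ and supported in $\bk\in\eps Y\subset W_{\!\boldsymbol{I}^*}$; the identity then follows from $\|\mathcal{J}_{\mathbb{R}^d}\psi\|_{L^2(W_{\!\boldsymbol{I}^*};L^2(W_{\!\boldsymbol{I}}))}=\|\psi\|_{L^2(\mathbb{R}^d)}$ after accounting for the Jacobians of the two substitutions. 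Applied to $V=\Up-U_j$, it reduces each claim to the uniform fiberwise bound $\|G_{\Up}(\hat\bk;\cdot)-G_{U_j}(\hat\bk;\cdot)\|_{L^2(W_{\!\boldsymbol{I}})}=O(\eps^{\,j+1})$ for $\hat\bk\in Y$, after which $F\in L^2$ closes the estimate.

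To obtain those fiberwise bounds I would expand the numerator and denominator of $G_{\Up}$ separately. Proposition~\ref{Prop Section Higher order eqn 1} supplies the second-order expansion of the numerator profile $\langle\rho\phi_p(\boldsymbol{0};\cdot)\overline{\phi_p(\eps\hat\bk;\cdot)}\rangle\phi_p(\eps\hat\bk;\cdot)$ with an $O(\eps^3)$ remainder in $L^2(W_{\!\boldsymbol{I}})$, whose successive truncations are exactly the periodic factors $\phi_p(\boldsymbol{0};\cdot)$, $\eps\,i\hat\bk\cdot\bchi^{\mbox{\tiny{(1)}}}$, and $\eps^2(i\hat\bk)^2:(\langle\rho\bchi^{(1)}\otimes\overline{\bchi^{(1)}}\rangle\phi_p(\boldsymbol{0};\cdot)+\bchi^{\mbox{\tiny{(2)}}})$ carried by $U_0,U_1,U_2$. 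For the denominator, \eqref{drf1} together with the even expansion~\eqref{wexp 2} gives $\omega_p^2(\eps\hat\bk)-\omega^2=\eps^2(\hat\omega_2^2-\sigma\hat\Omega^2)+\eps^4\hat\omega_4^2+O(\eps^6)$, so that $\eps^2/(\omega_p^2(\eps\hat\bk)-\omega^2)$ equals $1/D_0(\hat\bk)+O(\eps^2)$ and $1/D_2(\hat\bk)+O(\eps^4)$, where $D_0$ and $D_2$ are precisely the denominators defining $W_0$ and $W_2$ in~\eqref{Main W0}--\eqref{Main W2}. Multiplying the matched numerator and denominator expansions and collecting powers of $\eps$ identifies $G_{U_0}$, $G_{U_1}$, $G_{U_2}$ and leaves remainders of order $\eps$, $\eps^2$, $\eps^3$, which is what the Parseval identity needs.

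The step I expect to be delicate is securing a \emph{uniform} lower bound $|D_0(\hat\bk)|\geqslant c>0$ (hence $|D_2(\hat\bk)|\geqslant c/2$ for small $\eps$) over the compact $\overline{Y}$, on which the uniformity of the denominator expansion rests. This is exactly where the band-gap hypothesis~\eqref{bg2}--\eqref{bg3} is used: because $\omega^2=\omega_p^2(\boldsymbol{0})+\eps^2\sigma\hat\Omega^2$ must avoid every branch $\omega_m^2(\bk)$ as $\eps\to0$, the simple edge $\omega_p^2(\boldsymbol{0})$ forces the quadratic form $\hat\omega_2^2=-\tfrac{\bmu^{\mbox{\tiny{(0)}}}}{\rho^{\mbox{\tiny{(0)}}}}\!:(i\hat\bk)^2$ to be sign-definite with $\sigma$ of the opposite sign, so that $D_0(\hat\bk)=\hat\omega_2^2-\sigma\hat\Omega^2$ satisfies $|D_0(\hat\bk)|\geqslant\hat\Omega^2>0$ throughout $\overline{Y}$. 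A secondary point is that the $O(\eps^3)$ remainder in Proposition~\ref{Prop Section Higher order eqn 1} must be uniform in $\hat\bk\in Y$; this follows from the holomorphy of $\phi_p(\bk;\cdot)$ near $\bk=\boldsymbol{0}$ and the boundedness of $Y$, which render all Taylor remainders polynomially controlled in $\hat\bk$ and hence uniformly $O(\eps^3)$ on $\overline{Y}$.
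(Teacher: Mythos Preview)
Your proposal is correct and follows essentially the same route as the paper: reduce to $\Up$ via Theorem~\ref{Thm uothers no contri}, pass through the Bloch isometry (your ``Parseval-type identity'' is exactly the paper's computation of $\mathcal{J}_{\mathbb{R}^d}u_j$ followed by Lemma~\ref{Bloch isomorphism}), and then estimate fiberwise using Proposition~\ref{Prop Section Higher order eqn 1} for the numerator and the even dispersion expansion~\eqref{wexp 2} for the denominator. Your explicit treatment of the uniform lower bound $|D_0(\hat\bk)|\geqslant\hat\Omega^2$ on $\overline{Y}$ and of the uniformity of the $O(\eps^3)$ remainder in Proposition~\ref{Prop Section Higher order eqn 1} is a welcome addition, as the paper takes both for granted.
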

\begin{proof}
Recall from Theorem \ref{Thm uothers no contri} (in particular equation \eqref{Thm u - uothers eqn 2}) that
\begin{eqnarray*}
\|{  U_{\epsilon} } - \Up\|_{L^2(\mathbb{R}^d)} \leqslant {C}\, \eps^{3} \|F\|_{L^2(\mathbb{R}^d)},
\end{eqnarray*}
where $U^{(p)}=\up(\cdot/\epsilon)$ with $\up$ given by~\eqref{Section u drive eqn 1}. Therefore to prove the theorem, it is sufficient to show that
\begin{eqnarray*}
\|U^{(p)}-U_0\|_{L^2(\mathbb{R}^d)} = O(\eps),~~\|U^{(p)}-U_1\|_{L^2(\mathbb{R}^d)} = O(\eps^2),~~\|U^{(p)}-U_2\|_{L^2(\mathbb{R}^d)} = O(\eps^3).
\end{eqnarray*}

We first prove $\|U^{(p)}-U_0\|_{L^2(\mathbb{R}^d)} = O(\eps)$, and then prove the higher order estimates. (We have adjusted the font size of the equations in the following proof for the best presentation since they are too long.)

\textbf{1.}  Proof of $\|U^{(p)}-U_0\|_{L^2(\mathbb{R}^d)} = O(\eps)$: This is equivalent to $\|u^{(p)}-u_0\|_{L^2(\mathbb{R}^d)} = \epsilon^{-d/2}O(\eps)$ where $u_0(\bx):= U_0(\epsilon\bx)$. According to Lemma \ref{Lemma Lemma Bloch expansion}, it is sufficient to show that $\|\mathcal{J}_{\mathbb{R}^d} u^{(p)}-\mathcal{J}_{\mathbb{R}^d} u_0\|_{L^2(W_{\!{\boldsymbol{I}^*}};\mathcal{H}^0_{\bk} (W_{\!\boldsymbol{I}}))} = \epsilon^{-d/2}O(\eps)$.

We first obtain the representation for $\mathcal{J}_{\mathbb{R}^d} u_0$ and $\mathcal{J}_{\mathbb{R}^d} u^{(p)}$. Indeed, from the expression \eqref{Main leading-order U0} of $U_0$ with $W_0$ defined by \eqref{Main W0}, we obtain (by noting that $F$ is compactly supported in $Y$) that
{\footnotesize
\begin{eqnarray}
u_0(\bx) &=& U_0(\epsilon\bx) = \frac{1}{(2\pi)^{d/2}} \int_{Y}    \frac{   \phi_p(\boldsymbol{0}; \bx)  F(\hat{\bk}) e^{i\eps \hat{\bk} \cdot   \bx}  }{-\frac{{\bmu}^{\mbox{\tiny{(0)}}}}{\rho^{\mbox{\tiny{(0)}}}}\!: (i\hat{\bk})^2  -\sigma{\hat{\Omega}}^2}  \ind \hat{\bk} \nonumber \\
&=& \frac{\epsilon^{2-d}}{(2\pi)^{d/2}} \int_{\epsilon Y}    \frac{   \phi_p(\boldsymbol{0}; \bx)  F({\bk}/\epsilon) e^{i{\bk} \cdot   \bx}  }{-\frac{{\bmu}^{\mbox{\tiny{(0)}}}}{\rho^{\mbox{\tiny{(0)}}}}\!: (i{\bk})^2  -\epsilon^2 \sigma{\hat{\Omega}}^2}  \ind {\bk} \quad(\mbox{using a change of variable } \hat{\bk} \to \bk)\nonumber \\%\label{Main leading-order u0} \\
&=& \phi_p(\boldsymbol{0}; \tilde{\bx})  \frac{\epsilon^{2-d}}{(2\pi)^{d/2}} \int_{ W_{\boldsymbol{I}^*}}    \frac{    F({\bk}/\epsilon) e^{i{\bk} \cdot   \tilde{\bx}}  }{-\frac{{\bmu}^{\mbox{\tiny{(0)}}}}{\rho^{\mbox{\tiny{(0)}}}}\!: (i{\bk})^2  -\epsilon^2 \sigma{\hat{\Omega}}^2}  e^{i{\bk} \cdot   \bj} ~\ind \bk \quad {(\tiny\mbox{$\bj$ is such that $\tilde{\bx}=\bx-\bj \in W_{\!{\boldsymbol{I}}}$ } )} \nonumber \\
&=&\epsilon^{2-d} \phi_p(\boldsymbol{0}; \tilde{\bx})   \mathcal{J}_{\mathbb{R}^d}^{-1} [ \frac{    F({\bk}/\epsilon) e^{i{\bk} \cdot \tilde{\bx}   }  }{-\frac{{\bmu}^{\mbox{\tiny{(0)}}}}{\rho^{\mbox{\tiny{(0)}}}}\!: (i{\bk})^2  -\epsilon^2 \sigma{\hat{\Omega}}^2}]. \nonumber 
\end{eqnarray}
}
Therefore we obtain that
{\footnotesize
\begin{eqnarray} 
(\mathcal{J}_{\mathbb{R}^d} u_0)(\bk;\tilde{\bx}) = \epsilon^{2-d} \phi_p(\boldsymbol{0}; \tilde{\bx}) \frac{    F({\bk}/\epsilon) e^{i{\bk} \cdot \tilde{\bx}   }  }{-\frac{{\bmu}^{\mbox{\tiny{(0)}}}}{\rho^{\mbox{\tiny{(0)}}}}\!: (i{\bk})^2  -\epsilon^2 \sigma{\hat{\Omega}}^2}, \quad \bk \in W_{\boldsymbol{I}^*}, \tilde{\bx} \in W_{\boldsymbol{I}}, \label{Main theorem proof Ju0}
\end{eqnarray}
}and $(\mathcal{J}_{\mathbb{R}^d} u_0)(\bk;\tilde{\bx})$ is compactly supported in $\eps Y$ (in the $\bk$ variable) since $F$ is compactly supported in $Y$.
% when we think about the $2\pi \boldsymbol{I}$ periodicity in the variable $\bk$, we implicitly extend $\frac{    F({\bk}/\epsilon)  }{-\frac{{\bmu}^{\mbox{\tiny{(0)}}}}{\rho^{\mbox{\tiny{(0)}}}}\!: (i{\bk})^2  -\epsilon^2 \sigma{\hat{\Omega}}^2}$ (which is compactly supported in $W_{\boldsymbol{I}^*}$) periodically to $\mathbb{R}^d$.
From the expression \eqref{u(p)} of $\up$, it is seen that 
{\footnotesize
\begin{eqnarray}
(\mathcal{J}_{\mathbb{R}^d} \up)(\bk;\tilde{\bx}) &=&  \frac{\eps^2   \langle  \mathcal{J}_{\mathbb{R}^d} \big[{ f_{\epsilon}}(\eps \cdot) \big] ( {\bk}; \cdot), e^{i  {\bk}  \cdot}\phi_p( ;\cdot) \rangle }{\omega^2_p(\bk) -\omega^2} e^{i\bk\cdot   \tilde{\bx}} \phi_p(\bk;\tilde{\bx}),  \label{Main theorem proof Jup}
%&=&\frac{\eps^{2-d}    \langle \rho {\phi_p(\boldsymbol{0}; \cdot)} \overline{\phi_p( {\bk};{\cdot})} \rangle F(\bk/\eps) }{\omega^2_p(\bk) -\omega^2} e^{i\bk\cdot   \bx} \phi_p(\bk;\bx).
\end{eqnarray}
}for $ \bk \in W_{\boldsymbol{I}^*}, \tilde{\bx} \in W_{\boldsymbol{I}}$, and that $(\mathcal{J}_{\mathbb{R}^d} \up)(\bk;\tilde{\bx})$ has compact support in $\epsilon Y$ in the $\bk$ variable (since $ \langle  \mathcal{J}_{\mathbb{R}^d} \big[{ f_{\epsilon}}(\eps \cdot) \big] ( {\bk}; \cdot), e^{i  {\bk}  \cdot}\phi_p( ;\cdot) \rangle$ has compact support in $\epsilon Y$ according to Proposition \ref{Prop f Y to f Rd}).

Now from equations \eqref{Main theorem proof Ju0}--\eqref{Main theorem proof Jup}, we obtain (noting both terms are compactly supported in $\eps Y$) that
{\footnotesize
\begin{eqnarray}
&&\|\mathcal{J}_{\mathbb{R}^d} u^{(p)}-\mathcal{J}_{\mathbb{R}^d} u_0\|^2_{L^2(W_{\!{\boldsymbol{I}^*}};\mathcal{H}^0_{\bk} (W_{\!\boldsymbol{I}}))} = \int_{\eps Y} \int_{W_{\!\boldsymbol{I}}} \Big| \frac{\eps^2   \langle  \mathcal{J}_{\mathbb{R}^d} \big[{ f_{\epsilon}}(\eps \cdot) \big] ( {\bk}; \cdot), e^{i  {\bk}  \cdot}\phi_p( ;\cdot) \rangle }{\omega^2_p(\bk) -\omega^2} 
\nonumber \\
&&\hspace{2cm}\times e^{i\bk\cdot   \tilde{\bx}} \phi_p(\bk;\tilde{\bx}) - \epsilon^{2-d} \phi_p(\boldsymbol{0}; \tilde{\bx}) \frac{    F({\bk}/\epsilon) e^{i{\bk} \cdot \tilde{\bx}   }  }{-\frac{{\bmu}^{\mbox{\tiny{(0)}}}}{\rho^{\mbox{\tiny{(0)}}}}\!: (i{\bk})^2  -\epsilon^2 \sigma{\hat{\Omega}}^2}\Big|^2 \ind \tilde{\bx} \ind \bk.  \label{Main theorem proof Jup-Ju0 norm}
\end{eqnarray}
}
We further simplify the first term in the above integrand as
{\footnotesize
\begin{eqnarray}
&&\frac{\eps^2   \langle  \mathcal{J}_{\mathbb{R}^d} \big[{ f_{\epsilon}}(\eps \cdot) \big] ( {\bk}; \cdot), e^{i  {\bk}  \cdot}\phi_p( ;\cdot) \rangle }{\omega^2_p(\bk) -\omega^2} e^{i\bk\cdot   \tilde{\bx}} \phi_p(\bk;\tilde{\bx}) \nonumber \\
&=& \frac{\eps^2   \frac{1}{(2\pi)^{d/2}}  \int_{\mathbb{R}^d}     { f_{\epsilon}}(\eps \by) e^{-i  {\bk} \cdot \by} \overline{ \phi_p( {\bk};\by)}\ind \by }{\omega^2_p(\bk) -\omega^2} e^{i\bk\cdot   \tilde{\bx}} \phi_p(\bk;\tilde{\bx}) \qquad (\mbox{using } \eqref{f Y to f Rd})\nonumber \\
&=& \eps^{2-d}     \frac{\langle \rho {\phi_p(\boldsymbol{0}; \cdot)} \overline{\phi_p( {\bk};{\cdot})} \rangle}{\omega^2_p(\bk) -\omega^2}  F(\bk/\eps) e^{i\bk\cdot   \tilde{\bx}} \phi_p(\bk;\tilde{\bx})  \quad (\mbox{using Proposition }\eqref{almost orthogonality}).\label{Main theorem proof Jup-Ju0 first term}
\end{eqnarray}
}
From equation \eqref{Main theorem proof Jup-Ju0 norm} and \eqref{Main theorem proof Jup-Ju0 first term} we obtain that
{\footnotesize
\begin{eqnarray} \label{Main theorem proof Jup-Ju0 norm 1}
&&\|\mathcal{J}_{\mathbb{R}^d} u^{(p)}-\mathcal{J}_{\mathbb{R}^d} u_0\|^2_{L^2(W_{\!{\boldsymbol{I}^*}};\mathcal{H}^0_{\bk} (W_{\!\boldsymbol{I}}))} \nonumber \\
&=& \epsilon^{4-2d}  \int_{\eps Y} \int_{W_{\!\boldsymbol{I}}} \bigg|     F(\bk/\eps) e^{i\bk\cdot   \tilde{\bx}} \Big[ \frac{   \langle \rho {\phi_p(\boldsymbol{0}; \cdot)} \overline{\phi_p( {\bk};{\cdot})} \rangle }{\omega^2_p(\bk) -\omega^2} \phi_p(\bk;\tilde{\bx}) - \frac{  1 }{-\frac{{\bmu}^{\mbox{\tiny{(0)}}}}{\rho^{\mbox{\tiny{(0)}}}}\!: (i{\bk})^2  -\epsilon^2 \sigma{\hat{\Omega}}^2} \phi_p(\boldsymbol{0}; \tilde{\bx}) 
\Big]\bigg|^2 \ind \tilde{\bx} \ind \bk \nonumber \\
&=& \epsilon^{-d}  \int_{Y} \int_{W_{\!\boldsymbol{I}}} \bigg|     F(\hat{\bk}) e^{i \epsilon \hat{\bk}\cdot   \tilde{\bx}} \Big[ \frac{   \langle \rho {\phi_p(\boldsymbol{0}; \cdot)} \overline{\phi_p( {\bk};{\cdot})} \rangle }{\big(\omega^2_p(\eps \hat{\bk}) -\omega^2\big)/\epsilon^2} \phi_p(\eps \hat{\bk};\tilde{\bx}) - \frac{  1 }{-\frac{{\bmu}^{\mbox{\tiny{(0)}}}}{\rho^{\mbox{\tiny{(0)}}}}\!: (i{\hat{\bk}})^2  -  \sigma{\hat{\Omega}}^2} \phi_p(\boldsymbol{0}; \tilde{\bx}) 
\Big]\bigg|^2 \ind \tilde{\bx} \ind \hat{\bk} \nonumber \\
&\le& \epsilon^{-d} W_{\!\boldsymbol{I}}  \int_{Y} \Big|F(\hat{\bk}) \Big|^2 \Big \|\frac{   \langle \rho {\phi_p(\boldsymbol{0}; \cdot)} \overline{\phi_p( {\bk};{\cdot})} \rangle \phi_p(\eps \hat{\bk};\tilde{\bx}) }{\big(\omega^2_p(\eps \hat{\bk}) -\omega^2\big)/\epsilon^2} - \frac{  \phi_p(\boldsymbol{0}; \tilde{\bx}) }{-\frac{{\bmu}^{\mbox{\tiny{(0)}}}}{\rho^{\mbox{\tiny{(0)}}}}\!: (i{\hat{\bk}})^2  -  \sigma{\hat{\Omega}}^2}   \Big\|^2_{L^2(W_{\!{\boldsymbol{I}}})}  \ind \hat{\bk}.
\end{eqnarray}
}From Proposition \ref{Prop Section Higher order eqn 1}, the expression of $\omega$ in \eqref{drf1}, and the asymptotic of $\omega^2_p(\eps \hat{\bk})$ in Lemma \ref{asymptotic main phi_0}, we obtain that
{\footnotesize
\begin{eqnarray}
&& \Big \|\frac{   \langle \rho {\phi_p(\boldsymbol{0}; \cdot)} \overline{\phi_p( {\bk};{\cdot})} \rangle \phi_p(\eps \hat{\bk};\tilde{\bx}) }{\big(\omega^2_p(\eps \hat{\bk}) -\omega^2\big)/\epsilon^2} - \frac{  \phi_p(\boldsymbol{0}; \tilde{\bx}) }{-\frac{{\bmu}^{\mbox{\tiny{(0)}}}}{\rho^{\mbox{\tiny{(0)}}}}\!: (i{\hat{\bk}})^2  -  \sigma{\hat{\Omega}}^2}   \Big\|_{L^2(W_{\!{\boldsymbol{I}}})} \nonumber \\
&=&  \Big \|\frac{   \langle \rho {\phi_p(\boldsymbol{0}; \cdot)} \overline{\phi_p( {\bk};{\cdot})} \rangle \phi_p(\eps \hat{\bk};\tilde{\bx})  }{-\frac{{\bmu}^{\mbox{\tiny{(0)}}}}{\rho^{\mbox{\tiny{(0)}}}}\!: (i{\hat{\bk}})^2  -  \sigma{\hat{\Omega}}^2 +O(\epsilon^2)}  - \frac{  \phi_p(\boldsymbol{0}; \tilde{\bx})  }{-\frac{{\bmu}^{\mbox{\tiny{(0)}}}}{\rho^{\mbox{\tiny{(0)}}}}\!: (i{\hat{\bk}})^2  -  \sigma{\hat{\Omega}}^2}  \Big\|_{L^2(W_{\!{\boldsymbol{I}}})} (\mbox{using } \eqref{drf1} \mbox{ and Lemma } \ref{asymptotic main phi_0}) \nonumber \\
%&=&  \Big \| \frac{    \langle \rho {\phi_p(\boldsymbol{0}; \cdot)} \overline{\phi_p( {\bk};{\cdot})} \rangle \phi_p(\eps \hat{\bk};\bx)- \phi_p(\boldsymbol{0}; \bx) }{-\frac{{\bmu}^{\mbox{\tiny{(0)}}}}{\rho^{\mbox{\tiny{(0)}}}}\!: (i{\hat{\bk}})^2  -  \sigma{\hat{\Omega}}^2} \Big\|^2_{L^2(W_{\!{\boldsymbol{I}}})} +O(\epsilon) \\
&=& \Big|  \frac{  1}{-\frac{{\bmu}^{\mbox{\tiny{(0)}}}}{\rho^{\mbox{\tiny{(0)}}}}\!: (i{\hat{\bk}})^2  -  \sigma{\hat{\Omega}}^2} \Big| \Big \|    \langle \rho {\phi_p(\boldsymbol{0}; \cdot)} \overline{\phi_p( {\bk};{\cdot})} \rangle \phi_p(\eps \hat{\bk};\tilde{\bx})- \phi_p(\boldsymbol{0}; \tilde{\bx})  \Big\|_{L^2(W_{\!{\boldsymbol{I}}})} +O(\epsilon^2) \nonumber \\
&=& O(\epsilon) \qquad ( \mbox{using Proposition }\ref{Prop Section Higher order eqn 1}). \label{Main theorem proof Jup-Ju0 norm 2}
\end{eqnarray}
}
Now from equations \eqref{Main theorem proof Jup-Ju0 norm 1} and \eqref{Main theorem proof Jup-Ju0 norm 2}, we obtain that
{\footnotesize
\begin{eqnarray} \label{Main theorem proof Jup-Ju0 norm 3}
\|\mathcal{J}_{\mathbb{R}^d} u^{(p)}-\mathcal{J}_{\mathbb{R}^d} u_0\|^2_{L^2(W_{\!{\boldsymbol{I}^*}};\mathcal{H}^0_{\bk} (W_{\!\boldsymbol{I}}))}  \le  c \epsilon^{-d} (O(\epsilon) )^2,
\end{eqnarray}
}for some constant $c$ independent of $\epsilon$. Thus this proves that $\|\mathcal{J}_{\mathbb{R}^d} u^{(p)}-\mathcal{J}_{\mathbb{R}^d} u_0\|_{L^2(W_{\!{\boldsymbol{I}^*}};\mathcal{H}^0_{\bk} (W_{\!\boldsymbol{I}}))} = \epsilon^{-d/2}O(\eps)$ and hence $\|u^{(p)}-u_0\|_{L^2(\mathbb{R}^d)} = \epsilon^{-d/2}O(\eps)$, i.e., $\|U^{(p)}-U_0\|_{L^2(\mathbb{R}^d)} = O(\eps)$. This completes the proof in the leading-order case.

\textbf{2.}  Proof of $\|U^{(p)}-U_1\|_{L^2(\mathbb{R}^d)} = O(\eps^2)$: The idea is similar to the proof in the leading-order case. We show below the detailed proof. $\|U^{(p)}-U_1\|_{L^2(\mathbb{R}^d)} = O(\eps^2)$ is equivalent to $\|u^{(p)}-u_1\|_{L^2(\mathbb{R}^d)} = \epsilon^{-d/2}O(\eps^2)$ where $u_1(\bx):= U_1(\epsilon\bx)$. According to Lemma \ref{Lemma Lemma Bloch expansion}, it is sufficient to show that $\|\mathcal{J}_{\mathbb{R}^d} u^{(p)}-\mathcal{J}_{\mathbb{R}^d} u_1\|_{L^2(W_{\!{\boldsymbol{I}^*}};\mathcal{H}^0_{\bk} (W_{\!\boldsymbol{I}}))} = \epsilon^{-d/2}O(\eps^2)$.

We first obtain the representation for $\mathcal{J}_{\mathbb{R}^d} u_1$. Indeed, from the expression \eqref{Main first-order U1} of $U_1$ with $W_0$ defined by \eqref{Main W0}, we obtain (by noting that $F$ is compactly supported in $Y$) that
{\footnotesize
\begin{eqnarray}
&& u_1(\bx) = U_1(\epsilon\bx)  \nonumber \\
&=&\frac{1}{(2\pi)^{d/2}} \int_{Y}    \frac{   \big[\phi_p(\boldsymbol{0}; \bx) + \epsilon \bchi^{\mbox{\tiny{(1)}}}(\bx) \cdot i \hat{\bk} \big] F(\hat{\bk}) e^{i\eps \hat{\bk} \cdot   \bx}  }{-\frac{{\bmu}^{\mbox{\tiny{(0)}}}}{\rho^{\mbox{\tiny{(0)}}}}\!: (i\hat{\bk})^2  -\sigma{\hat{\Omega}}^2}  \ind \hat{\bk} \nonumber \\
&=& \frac{\epsilon^{2-d}}{(2\pi)^{d/2}} \int_{\epsilon Y}    \frac{  \big[\phi_p(\boldsymbol{0}; \bx) +  \bchi^{\mbox{\tiny{(1)}}}(\bx) \cdot i  {\bk} \big] F({\bk}/\epsilon) e^{i{\bk} \cdot   \bx}  }{-\frac{{\bmu}^{\mbox{\tiny{(0)}}}}{\rho^{\mbox{\tiny{(0)}}}}\!: (i{\bk})^2  -\epsilon^2 \sigma{\hat{\Omega}}^2}  \ind {\bk} \nonumber \\%\label{Main leading-order u0} \\
&=& \big[\phi_p(\boldsymbol{0}; \tilde{\bx}) +  \bchi^{\mbox{\tiny{(1)}}}(\tilde{\bx}) \cdot i  {\bk} \big]  \frac{\epsilon^{2-d}}{(2\pi)^{d/2}} \int_{ W_{\boldsymbol{I}^*}}    \frac{    F({\bk}/\epsilon) e^{i{\bk} \cdot   \tilde{\bx}}  }{-\frac{{\bmu}^{\mbox{\tiny{(0)}}}}{\rho^{\mbox{\tiny{(0)}}}}\!: (i{\bk})^2  -\epsilon^2 \sigma{\hat{\Omega}}^2}  e^{i{\bk} \cdot   \bj} ~\ind \bk \quad \nonumber \\
&=&\epsilon^{2-d} \big[\phi_p(\boldsymbol{0}; \tilde{\bx}) + \bchi^{\mbox{\tiny{(1)}}}(\tilde{\bx}) \cdot i  {\bk} \big]  \mathcal{J}_{\mathbb{R}^d}^{-1} [ \frac{    F({\bk}/\epsilon) e^{i{\bk} \cdot \tilde{\bx}   }  }{-\frac{{\bmu}^{\mbox{\tiny{(0)}}}}{\rho^{\mbox{\tiny{(0)}}}}\!: (i{\bk})^2  -\epsilon^2 \sigma{\hat{\Omega}}^2}], \nonumber 
\end{eqnarray}
}
where $\mbox{$\bj$ is such that $\tilde{\bx}=\bx-\bj \in W_{\!{\boldsymbol{I}}}$ }$.
Therefore we obtain that
{\footnotesize
\begin{eqnarray} 
(\mathcal{J}_{\mathbb{R}^d} u_0)(\bk;\tilde{\bx}) = \epsilon^{2-d} \big[\phi_p(\boldsymbol{0}; \tilde{\bx}) + \bchi^{\mbox{\tiny{(1)}}}(\tilde{\bx}) \cdot i  {\bk} \big]   \frac{    F({\bk}/\epsilon) e^{i{\bk} \cdot \tilde{\bx}   }  }{-\frac{{\bmu}^{\mbox{\tiny{(0)}}}}{\rho^{\mbox{\tiny{(0)}}}}\!: (i{\bk})^2  -\epsilon^2 \sigma{\hat{\Omega}}^2}, \quad \bk \in W_{\boldsymbol{I}^*}, \tilde{\bx} \in W_{\boldsymbol{I}}. \label{Main theorem proof Ju0 first-order}
\end{eqnarray}
}Now from equations \eqref{Main theorem proof Ju0 first-order}  and \eqref{Main theorem proof Jup}, we obtain (noting both terms are compactly supported in $\eps Y$) that
{\footnotesize
\begin{eqnarray}
&&\|\mathcal{J}_{\mathbb{R}^d} u^{(p)}-\mathcal{J}_{\mathbb{R}^d} u_1\|^2_{L^2(W_{\!{\boldsymbol{I}^*}};\mathcal{H}^0_{\bk} (W_{\!\boldsymbol{I}}))} \nonumber \\
&=& \int_{\eps Y} \int_{W_{\!\boldsymbol{I}}} \Big| \frac{\eps^2   \langle  \mathcal{J}_{\mathbb{R}^d} \big[{ f_{\epsilon}}(\eps \cdot) \big] ( {\bk}; \cdot), e^{i  {\bk}  \cdot}\phi_p( ;\cdot) \rangle }{\omega^2_p(\bk) -\omega^2} e^{i\bk\cdot   \tilde{\bx}} \phi_p(\bk;\tilde{\bx})  \nonumber \\
&&\hspace{1cm}- \epsilon^{2-d} \big[\phi_p(\boldsymbol{0}; \tilde{\bx}) + \bchi^{\mbox{\tiny{(1)}}}(\tilde{\bx}) \cdot i  {\bk} \big] \frac{    F({\bk}/\epsilon) e^{i{\bk} \cdot \tilde{\bx}   }  }{-\frac{{\bmu}^{\mbox{\tiny{(0)}}}}{\rho^{\mbox{\tiny{(0)}}}}\!: (i{\bk})^2  -\epsilon^2 \sigma{\hat{\Omega}}^2}\Big|^2 \ind \tilde{\bx} \ind \bk.  \label{Main theorem proof Jup-Ju0 norm first-order}
\end{eqnarray}}
From equation \eqref{Main theorem proof Jup-Ju0 norm first-order} and \eqref{Main theorem proof Jup-Ju0 first term} we obtain that
{\footnotesize
\begin{eqnarray} \label{Main theorem proof Jup-Ju0 norm 1 first-order}
&&\|\mathcal{J}_{\mathbb{R}^d} u^{(p)}-\mathcal{J}_{\mathbb{R}^d} u_1\|^2_{L^2(W_{\!{\boldsymbol{I}^*}};\mathcal{H}^0_{\bk} (W_{\!\boldsymbol{I}}))} \nonumber \\
&=& \epsilon^{4-2d}  \int_{\eps Y} \int_{W_{\!\boldsymbol{I}}} \bigg|     F(\bk/\eps) e^{i\bk\cdot   \tilde{\bx}} \Big[ \frac{   \langle \rho {\phi_p(\boldsymbol{0}; \cdot)} \overline{\phi_p( {\bk};{\cdot})} \rangle }{\omega^2_p(\bk) -\omega^2} \phi_p(\bk;\tilde{\bx}) - \frac{  \phi_p(\boldsymbol{0}; \tilde{\bx}) + \bchi^{\mbox{\tiny{(1)}}}(\tilde{\bx}) \cdot i  {\bk}   }{-\frac{{\bmu}^{\mbox{\tiny{(0)}}}}{\rho^{\mbox{\tiny{(0)}}}}\!: (i{\bk})^2  -\epsilon^2 \sigma{\hat{\Omega}}^2}  
\Big]\bigg|^2 \ind \tilde{\bx} \ind \bk \nonumber \\
&=& \epsilon^{-d}  \int_{Y} \int_{W_{\!\boldsymbol{I}}} \bigg|     F(\hat{\bk}) e^{i \epsilon \hat{\bk}\cdot   \tilde{\bx}} \Big[ \frac{   \langle \rho {\phi_p(\boldsymbol{0}; \cdot)} \overline{\phi_p( {\bk};{\cdot})} \rangle }{\big(\omega^2_p(\eps \hat{\bk}) -\omega^2\big)/\epsilon^2} \phi_p(\eps \hat{\bk};\tilde{\bx}) - \frac{ \phi_p(\boldsymbol{0}; \tilde{\bx}) + \eps \bchi^{\mbox{\tiny{(1)}}}(\tilde{\bx}) \cdot i  \hat{\bk}  }{-\frac{{\bmu}^{\mbox{\tiny{(0)}}}}{\rho^{\mbox{\tiny{(0)}}}}\!: (i{\hat{\bk}})^2  -  \sigma{\hat{\Omega}}^2}
\Big]\bigg|^2 \ind \tilde{\bx} \ind \hat{\bk} \nonumber \\
&\le& \epsilon^{-d} W_{\!\boldsymbol{I}}  \int_{Y} \Big|F(\hat{\bk}) \Big|^2 \Big \|\frac{   \langle \rho {\phi_p(\boldsymbol{0}; \cdot)} \overline{\phi_p( {\bk};{\cdot})} \rangle \phi_p(\eps \hat{\bk};\bx) }{\big(\omega^2_p(\eps \hat{\bk}) -\omega^2\big)/\epsilon^2} - \frac{ \phi_p(\boldsymbol{0}; \tilde{\bx}) + \eps \bchi^{\mbox{\tiny{(1)}}}(\tilde{\bx}) \cdot i  \hat{\bk} }{-\frac{{\bmu}^{\mbox{\tiny{(0)}}}}{\rho^{\mbox{\tiny{(0)}}}}\!: (i{\hat{\bk}})^2  -  \sigma{\hat{\Omega}}^2}   \Big\|^2_{L^2(W_{\!{\boldsymbol{I}}})}  \ind \hat{\bk}.
\end{eqnarray}
}From Proposition \ref{Prop Section Higher order eqn 1}, the expression of $\omega$ in \eqref{drf1}, and the asymptotic of $\omega^2_p(\eps \hat{\bk})$ in Lemma \ref{asymptotic main phi_0}, we obtain that
{\footnotesize
\begin{eqnarray}
&& \Big \|\frac{   \langle \rho {\phi_p(\boldsymbol{0}; \cdot)} \overline{\phi_p( {\bk};{\cdot})} \rangle \phi_p(\eps \hat{\bk};\tilde{\bx}) }{\big(\omega^2_p(\eps \hat{\bk}) -\omega^2\big)/\epsilon^2} - \frac{\phi_p(\boldsymbol{0}; \tilde{\bx}) + \eps \bchi^{\mbox{\tiny{(1)}}}(\tilde{\bx}) \cdot i  \hat{\bk} }{-\frac{{\bmu}^{\mbox{\tiny{(0)}}}}{\rho^{\mbox{\tiny{(0)}}}}\!: (i{\hat{\bk}})^2  -  \sigma{\hat{\Omega}}^2}   \Big\|_{L^2(W_{\!{\boldsymbol{I}}})} \nonumber \\
&=&  \Big \|\frac{   \langle \rho {\phi_p(\boldsymbol{0}; \cdot)} \overline{\phi_p( {\bk};{\cdot})} \rangle \phi_p(\eps \hat{\bk};\tilde{\bx})  }{-\frac{{\bmu}^{\mbox{\tiny{(0)}}}}{\rho^{\mbox{\tiny{(0)}}}}\!: (i{\hat{\bk}})^2  -  \sigma{\hat{\Omega}}^2 +O(\epsilon^2)}  - \frac{  \phi_p(\boldsymbol{0}; \tilde{\bx}) + \eps \bchi^{\mbox{\tiny{(1)}}}(\tilde{\bx}) \cdot i  \hat{\bk}   }{-\frac{{\bmu}^{\mbox{\tiny{(0)}}}}{\rho^{\mbox{\tiny{(0)}}}}\!: (i{\hat{\bk}})^2  -  \sigma{\hat{\Omega}}^2}  \Big\|_{L^2(W_{\!{\boldsymbol{I}}})} (\mbox{using } \eqref{drf1} \mbox{ and Lemma } \ref{asymptotic main phi_0}) \nonumber \\
%&=&  \Big \| \frac{    \langle \rho {\phi_p(\boldsymbol{0}; \cdot)} \overline{\phi_p( {\bk};{\cdot})} \rangle \phi_p(\eps \hat{\bk};\bx)- \phi_p(\boldsymbol{0}; \bx) }{-\frac{{\bmu}^{\mbox{\tiny{(0)}}}}{\rho^{\mbox{\tiny{(0)}}}}\!: (i{\hat{\bk}})^2  -  \sigma{\hat{\Omega}}^2} \Big\|^2_{L^2(W_{\!{\boldsymbol{I}}})} +O(\epsilon) \\
&=& \Big|  \frac{  1}{-\frac{{\bmu}^{\mbox{\tiny{(0)}}}}{\rho^{\mbox{\tiny{(0)}}}}\!: (i{\hat{\bk}})^2  -  \sigma{\hat{\Omega}}^2} \Big| \Big \|    \langle \rho {\phi_p(\boldsymbol{0}; \cdot)} \overline{\phi_p( {\bk};{\cdot})} \rangle \phi_p(\eps \hat{\bk};\tilde{\bx})- \phi_p(\boldsymbol{0}; \tilde{\bx}) -\eps \bchi^{\mbox{\tiny{(1)}}}(\tilde{\bx}) \cdot i  \hat{\bk} \Big\|_{L^2(W_{\!{\boldsymbol{I}}})} +O(\epsilon^2) \nonumber \\
&=& O(\epsilon^2) \qquad ( \mbox{using Proposition }\ref{Prop Section Higher order eqn 1}). \label{Main theorem proof Jup-Ju0 norm 2 first-order}
\end{eqnarray}
}Now from equations \eqref{Main theorem proof Jup-Ju0 norm 1 first-order} and \eqref{Main theorem proof Jup-Ju0 norm 2 first-order}, we obtain that
{\footnotesize
\begin{eqnarray} \label{Main theorem proof Jup-Ju0 norm 3 first-order}
\|\mathcal{J}_{\mathbb{R}^d} u^{(p)}-\mathcal{J}_{\mathbb{R}^d} u_1\|^2_{L^2(W_{\!{\boldsymbol{I}^*}};\mathcal{H}^0_{\bk} (W_{\!\boldsymbol{I}}))}  \le  c \epsilon^{-d} (O(\epsilon^2) )^2,
\end{eqnarray}
}for some constant $c$ independent of $\epsilon$. Thus this proves that $\|\mathcal{J}_{\mathbb{R}^d} u^{(p)}-\mathcal{J}_{\mathbb{R}^d} u_1\|_{L^2(W_{\!{\boldsymbol{I}^*}};\mathcal{H}^0_{\bk} (W_{\!\boldsymbol{I}}))} = \epsilon^{-d/2}O(\eps^2)$ and hence $\|u^{(p)}-u_1\|_{L^2(\mathbb{R}^d)} = \epsilon^{-d/2}O(\eps^2)$, i.e., $\|U^{(p)}-U_1\|_{L^2(\mathbb{R}^d)} = O(\eps^2)$. This completes the proof in the first-order case.

\textbf{3.}  Proof of $\|U^{(p)}-U_2\|_{L^2(\mathbb{R}^d)} = O(\eps^3)$: The idea again is similar to the proof in the leading-order case. We show below the detailed proof. $\|U^{(p)}-U_2\|_{L^2(\mathbb{R}^d)} = O(\eps^3)$ is equivalent to $\|u^{(p)}-u_2\|_{L^2(\mathbb{R}^d)} = \epsilon^{-d/2}O(\eps^3)$ where $u_2(\bx):= U_2(\epsilon\bx)$. According to Lemma \ref{Lemma Lemma Bloch expansion}, it is sufficient to show that $\|\mathcal{J}_{\mathbb{R}^d} u^{(p)}-\mathcal{J}_{\mathbb{R}^d} u_2\|_{L^2(W_{\!{\boldsymbol{I}^*}};\mathcal{H}^0_{\bk} (W_{\!\boldsymbol{I}}))} = \epsilon^{-d/2}O(\eps^3)$.

We first obtain the representation for $\mathcal{J}_{\mathbb{R}^d} u_2$. Indeed, from the expression \eqref{Main second-order U2} of $U_2$ with $W_2$ defined by \eqref{Main W2}, we obtain (by noting that $F$ is compactly supported in $Y$) that
{\footnotesize
\begin{eqnarray}
&& u_2(\bx)= U_2(\epsilon\bx) \nonumber \\
&=& \frac{1}{(2\pi)^{d/2}} \int_{Y}    \frac{    \big[\phi_p(\boldsymbol{0}; \bx) + \epsilon \bchi^{\mbox{\tiny{(1)}}}(\bx) \cdot i \hat{\bk} + \eps^2  \big(\langle \rho \bchi^{(1)} \otimes \overline{\bchi^{(1)}} \rangle  \phi_p(\boldsymbol{0}; \bx)+  \bchi^{\mbox{\tiny{(2)}}}(\bx)  \big): (i\hat{\bk})^2\big]  F(\hat{\bk}) e^{i\eps \hat{\bk} \cdot   \bx}  }{-\frac{{\bmu}^{\mbox{\tiny{(0)}}}}{\rho^{\mbox{\tiny{(0)}}}}\!: (i\hat{\bk})^2  -\eps^2 \frac{{\bmu}^{\mbox{\tiny{(2)}}}}{\rho^{\mbox{\tiny{(0)}}}}\!: (i\hat{\bk})^4 -{ \sigma}{\hat{\Omega}}^2}  \ind \hat{\bk} \nonumber \\
&=& \frac{\epsilon^{2-d}}{(2\pi)^{d/2}} \int_{\epsilon Y}    \frac{  \big[\phi_p(\boldsymbol{0}; \bx) +  \bchi^{\mbox{\tiny{(1)}}}(\bx) \cdot i  {\bk} + \big( \langle \rho \bchi^{(1)} \otimes \overline{\bchi^{(1)}} \rangle  \phi_p(\boldsymbol{0}; \bx)+   \bchi^{\mbox{\tiny{(2)}}}(\bx) \big) : (i {\bk})^2 \big] F({\bk}/\epsilon) e^{i{\bk} \cdot   \bx}  }{-\frac{{\bmu}^{\mbox{\tiny{(0)}}}}{\rho^{\mbox{\tiny{(0)}}}}\!: (i{\bk})^2  - \frac{{\bmu}^{\mbox{\tiny{(2)}}}}{\rho^{\mbox{\tiny{(0)}}}}\!: (i {\bk})^4 -\eps^2{ \sigma}{\hat{\Omega}}^2}  \ind {\bk} \nonumber \\%\label{Main leading-order u0} \\
&=& \big[\phi_p(\boldsymbol{0}; \tilde{\bx}) +  \bchi^{\mbox{\tiny{(1)}}}(\tilde{\bx}) \cdot i  {\bk} + \big( \langle \rho \bchi^{(1)} \otimes \overline{\bchi^{(1)}} \rangle  \phi_p(\boldsymbol{0}; \tilde{\bx})+   \bchi^{\mbox{\tiny{(2)}}}(\tilde{\bx}) \big) : (i {\bk})^2 \big] \nonumber \\
&&\times \frac{\epsilon^{2-d}}{(2\pi)^{d/2}} \int_{ W_{\boldsymbol{I}^*}}    \frac{    F({\bk}/\epsilon) e^{i{\bk} \cdot   \tilde{\bx}}  }{-\frac{{\bmu}^{\mbox{\tiny{(0)}}}}{\rho^{\mbox{\tiny{(0)}}}}\!: (i{\bk})^2  - \frac{{\bmu}^{\mbox{\tiny{(2)}}}}{\rho^{\mbox{\tiny{(0)}}}}\!: (i {\bk})^4 -\eps^2{ \sigma}{\hat{\Omega}}^2}  e^{i{\bk} \cdot   \bj} ~\ind \bk \nonumber \\
&=&\epsilon^{2-d} \big[\phi_p(\boldsymbol{0}; \tilde{\bx}) +  \bchi^{\mbox{\tiny{(1)}}}(\tilde{\bx}) \cdot i  {\bk} + \big( \langle \rho \bchi^{(1)} \otimes \overline{\bchi^{(1)}} \rangle  \phi_p(\boldsymbol{0}; \tilde{\bx})+   \bchi^{\mbox{\tiny{(2)}}}(\tilde{\bx}) \big) : (i {\bk})^2 \big]  \nonumber \\
&& \times \mathcal{J}_{\mathbb{R}^d}^{-1} [ \frac{    F({\bk}/\epsilon) e^{i{\bk} \cdot \tilde{\bx}   }  }{-\frac{{\bmu}^{\mbox{\tiny{(0)}}}}{\rho^{\mbox{\tiny{(0)}}}}\!: (i{\bk})^2  - \frac{{\bmu}^{\mbox{\tiny{(2)}}}}{\rho^{\mbox{\tiny{(0)}}}}\!: (i {\bk})^4-\eps^2{ \sigma}{\hat{\Omega}}^2}], \nonumber 
\end{eqnarray}
}
where $\mbox{$\bj$ is such that $\tilde{\bx}=\bx-\bj \in W_{\!{\boldsymbol{I}}}$ }$. Therefore we obtain that
{\footnotesize
\begin{eqnarray} 
&&(\mathcal{J}_{\mathbb{R}^d} u_0)(\bk;\tilde{\bx}) = \epsilon^{2-d} \big[\phi_p(\boldsymbol{0}; \tilde{\bx}) +  \bchi^{\mbox{\tiny{(1)}}}(\tilde{\bx}) \cdot i  {\bk} + \big( \langle \rho \bchi^{(1)} \otimes \overline{\bchi^{(1)}} \rangle  \phi_p(\boldsymbol{0}; \tilde{\bx})+   \bchi^{\mbox{\tiny{(2)}}}(\tilde{\bx}) \big) : (i {\bk})^2 \big]  \nonumber \\
&&\times   \frac{    F({\bk}/\epsilon) e^{i{\bk} \cdot \tilde{\bx}   }  }{-\frac{{\bmu}^{\mbox{\tiny{(0)}}}}{\rho^{\mbox{\tiny{(0)}}}}\!: (i{\bk})^2 - \frac{{\bmu}^{\mbox{\tiny{(2)}}}}{\rho^{\mbox{\tiny{(0)}}}}\!: (i {\bk})^4-\eps^2{ \sigma}{\hat{\Omega}}^2}, \quad \bk \in W_{\boldsymbol{I}^*}, \tilde{\bx} \in W_{\boldsymbol{I}}. \label{Main theorem proof Ju0 second-order}
\end{eqnarray}
}Now from equations \eqref{Main theorem proof Ju0 second-order}  and \eqref{Main theorem proof Jup}, we obtain (noting both terms are compactly supported in $\eps Y$) that
{\footnotesize
\begin{eqnarray}
&&\|\mathcal{J}_{\mathbb{R}^d} u^{(p)}-\mathcal{J}_{\mathbb{R}^d} u_1\|^2_{L^2(W_{\!{\boldsymbol{I}^*}};\mathcal{H}^0_{\bk} (W_{\!\boldsymbol{I}}))} \nonumber \\
&=& \int_{\eps Y} \int_{W_{\!\boldsymbol{I}}} \Big| \frac{\eps^2   \langle  \mathcal{J}_{\mathbb{R}^d} \big[{ f_{\epsilon}}(\eps \cdot) \big] ( {\bk}; \cdot), e^{i  {\bk}  \cdot}\phi_p( ;\cdot) \rangle }{\omega^2_p(\bk) -\omega^2} e^{i\bk\cdot   \tilde{\bx}} \phi_p(\bk;\tilde{\bx})  \nonumber \\
&& - \epsilon^{2-d} \big[\phi_p(\boldsymbol{0}; \tilde{\bx}) +  \bchi^{\mbox{\tiny{(1)}}}(\tilde{\bx}) \cdot i  {\bk} + \big( \langle \rho \bchi^{(1)} \otimes \overline{\bchi^{(1)}} \rangle  \phi_p(\boldsymbol{0}; \tilde{\bx})+   \bchi^{\mbox{\tiny{(2)}}}(\tilde{\bx}) \big) : (i {\bk})^2 \big]  \nonumber \\
&& \times \frac{    F({\bk}/\epsilon) e^{i{\bk} \cdot \tilde{\bx}   }  }{-\frac{{\bmu}^{\mbox{\tiny{(0)}}}}{\rho^{\mbox{\tiny{(0)}}}}\!: (i{\bk})^2 - \frac{{\bmu}^{\mbox{\tiny{(2)}}}}{\rho^{\mbox{\tiny{(0)}}}}\!: (i {\bk})^4 -\eps^2{ \sigma}{\hat{\Omega}}^2}\Big|^2 \ind \tilde{\bx} \ind \bk.  \label{Main theorem proof Jup-Ju0 norm second-order}
\end{eqnarray}}
From equation \eqref{Main theorem proof Jup-Ju0 norm second-order} and \eqref{Main theorem proof Jup-Ju0 first term} we obtain that
{\footnotesize
\begin{eqnarray} \label{Main theorem proof Jup-Ju0 norm 1 second-order}
&&\|\mathcal{J}_{\mathbb{R}^d} u^{(p)}-\mathcal{J}_{\mathbb{R}^d} u_1\|^2_{L^2(W_{\!{\boldsymbol{I}^*}};\mathcal{H}^0_{\bk} (W_{\!\boldsymbol{I}}))} \nonumber \\
&=& \epsilon^{4-2d}  \int_{\eps Y} \int_{W_{\!\boldsymbol{I}}} \bigg|     F(\bk/\eps) e^{i\bk\cdot   \tilde{\bx}} \Big[ \frac{   \langle \rho {\phi_p(\boldsymbol{0}; \cdot)} \overline{\phi_p( {\bk};{\cdot})} \rangle }{\omega^2_p(\bk) -\omega^2} \phi_p(\bk;\tilde{\bx}) \nonumber \\
&&- \frac{ \phi_p(\boldsymbol{0}; \tilde{\bx}) +  \bchi^{\mbox{\tiny{(1)}}}(\tilde{\bx}) \cdot i  {\bk} + \big( \langle \rho \bchi^{(1)} \otimes \overline{\bchi^{(1)}} \rangle  \phi_p(\boldsymbol{0}; \tilde{\bx})+   \bchi^{\mbox{\tiny{(2)}}}(\tilde{\bx}) \big) : (i {\bk})^2  }{-\frac{{\bmu}^{\mbox{\tiny{(0)}}}}{\rho^{\mbox{\tiny{(0)}}}}\!: (i{\bk})^2 - \frac{{\bmu}^{\mbox{\tiny{(2)}}}}{\rho^{\mbox{\tiny{(0)}}}}\!: (i {\bk})^4 -\eps^2{ \sigma}{\hat{\Omega}}^2}  
\Big]\bigg|^2 \ind \tilde{\bx} \ind \bk \nonumber \\
&=& \epsilon^{-d}  \int_{Y} \int_{W_{\!\boldsymbol{I}}} \bigg|     F(\hat{\bk}) e^{i \epsilon \hat{\bk}\cdot   \tilde{\bx}} \Big[ \frac{   \langle \rho {\phi_p(\boldsymbol{0}; \cdot)} \overline{\phi_p( {\bk};{\cdot})} \rangle }{\big(\omega^2_p(\eps \hat{\bk}) -\omega^2\big)/\epsilon^2} \phi_p(\eps \hat{\bk};\tilde{\bx}) \nonumber \\
&& -\frac{ \phi_p(\boldsymbol{0}; \tilde{\bx}) + \eps \bchi^{\mbox{\tiny{(1)}}}(\tilde{\bx}) \cdot i  \hat{\bk} +\eps^2  \big( \langle \rho \bchi^{(1)} \otimes \overline{\bchi^{(1)}} \rangle  \phi_p(\boldsymbol{0}; \tilde{\bx})+   \bchi^{\mbox{\tiny{(2)}}}(\tilde{\bx}) \big) : (i \hat{\bk})^2  }{-\frac{{\bmu}^{\mbox{\tiny{(0)}}}}{\rho^{\mbox{\tiny{(0)}}}}\!: (i\hat{\bk})^2 -   \eps^2  \frac{{\bmu}^{\mbox{\tiny{(2)}}}}{\rho^{\mbox{\tiny{(0)}}}}\!: (i \hat{\bk})^4 -{ \sigma}{\hat{\Omega}}^2}
\Big]\bigg|^2 \ind \tilde{\bx} \ind \hat{\bk} \nonumber \\
&\le& \epsilon^{-d} W_{\!\boldsymbol{I}}  \int_{Y} \Big|F(\hat{\bk}) \Big|^2 \Big \|\frac{   \langle \rho {\phi_p(\boldsymbol{0}; \cdot)} \overline{\phi_p( {\bk};{\cdot})} \rangle \phi_p(\eps \hat{\bk};\bx) }{\big(\omega^2_p(\eps \hat{\bk}) -\omega^2\big)/\epsilon^2} \nonumber \\
&&-\frac{ \phi_p(\boldsymbol{0}; \tilde{\bx}) + \eps \bchi^{\mbox{\tiny{(1)}}}(\tilde{\bx}) \cdot i  \hat{\bk} +\eps^2  \big( \langle \rho \bchi^{(1)} \otimes \overline{\bchi^{(1)}} \rangle  \phi_p(\boldsymbol{0}; \tilde{\bx})+   \bchi^{\mbox{\tiny{(2)}}}(\tilde{\bx}) \big) : (i \hat{\bk})^2  }{-\frac{{\bmu}^{\mbox{\tiny{(0)}}}}{\rho^{\mbox{\tiny{(0)}}}}\!: (i\hat{\bk})^2 -   \eps^2  \frac{{\bmu}^{\mbox{\tiny{(2)}}}}{\rho^{\mbox{\tiny{(0)}}}}\!: (i \hat{\bk})^4-{ \sigma}{\hat{\Omega}}^2} \Big\|^2_{L^2(W_{\!{\boldsymbol{I}}})}  \ind \hat{\bk}.
\end{eqnarray}
}From  Proposition \ref{Prop Section Higher order eqn 1}, the expression of $\omega$ in \eqref{drf1}, and the asymptotic of $\omega^2_p(\eps \hat{\bk})$ in Lemma \ref{asymptotic main phi_0}, we obtain that
{\footnotesize
\begin{eqnarray}
&& \Big \|\frac{   \langle \rho {\phi_p(\boldsymbol{0}; \cdot)} \overline{\phi_p( {\bk};{\cdot})} \rangle \phi_p(\eps \hat{\bk};\bx) }{\big(\omega^2_p(\eps \hat{\bk}) -\omega^2\big)/\epsilon^2} \nonumber \\
&&-\frac{ \phi_p(\boldsymbol{0}; \tilde{\bx}) + \eps \bchi^{\mbox{\tiny{(1)}}}(\tilde{\bx}) \cdot i  \hat{\bk} +\eps^2  \big( \langle \rho \bchi^{(1)} \otimes \overline{\bchi^{(1)}} \rangle  \phi_p(\boldsymbol{0}; \tilde{\bx})+   \bchi^{\mbox{\tiny{(2)}}}(\tilde{\bx}) \big) : (i \hat{\bk})^2  }{-\frac{{\bmu}^{\mbox{\tiny{(0)}}}}{\rho^{\mbox{\tiny{(0)}}}}\!: (i\hat{\bk})^2 -   \eps^2  \frac{{\bmu}^{\mbox{\tiny{(2)}}}}{\rho^{\mbox{\tiny{(0)}}}}\!: (i \hat{\bk})^4 -{ \sigma}{\hat{\Omega}}^2}   \Big\|_{L^2(W_{\!{\boldsymbol{I}}})} \nonumber \\
&=&  \Big \|\frac{   \langle \rho {\phi_p(\boldsymbol{0}; \cdot)} \overline{\phi_p( {\bk};{\cdot})} \rangle \phi_p(\eps \hat{\bk};\tilde{\bx})  }{-\frac{{\bmu}^{\mbox{\tiny{(0)}}}}{\rho^{\mbox{\tiny{(0)}}}}\!: (i\hat{\bk})^2 -   \eps^2  \frac{{\bmu}^{\mbox{\tiny{(2)}}}}{\rho^{\mbox{\tiny{(0)}}}}\!: (i \hat{\bk})^4 -{ \sigma}{\hat{\Omega}}^2 +O(\epsilon^4)}  \nonumber \\
&&-\frac{ \phi_p(\boldsymbol{0}; \tilde{\bx}) + \eps \bchi^{\mbox{\tiny{(1)}}}(\tilde{\bx}) \cdot i  \hat{\bk} +\eps^2  \big( \langle \rho \bchi^{(1)} \otimes \overline{\bchi^{(1)}} \rangle  \phi_p(\boldsymbol{0}; \tilde{\bx})+   \bchi^{\mbox{\tiny{(2)}}}(\tilde{\bx}) \big) : (i \hat{\bk})^2  }{-\frac{{\bmu}^{\mbox{\tiny{(0)}}}}{\rho^{\mbox{\tiny{(0)}}}}\!: (i\hat{\bk})^2-   \eps^2  \frac{{\bmu}^{\mbox{\tiny{(2)}}}}{\rho^{\mbox{\tiny{(0)}}}}\!: (i \hat{\bk})^4 -{ \sigma}{\hat{\Omega}}^2} \Big\|_{L^2(W_{\!{\boldsymbol{I}}})} \nonumber \\
&&(\mbox{using } \eqref{drf1} \mbox{ and Lemma } \ref{asymptotic main phi_0}) \nonumber \\
%&=&  \Big \| \frac{    \langle \rho {\phi_p(\boldsymbol{0}; \cdot)} \overline{\phi_p( {\bk};{\cdot})} \rangle \phi_p(\eps \hat{\bk};\bx)- \phi_p(\boldsymbol{0}; \bx) }{-\frac{{\bmu}^{\mbox{\tiny{(0)}}}}{\rho^{\mbox{\tiny{(0)}}}}\!: (i{\hat{\bk}})^2  -  \sigma{\hat{\Omega}}^2} \Big\|^2_{L^2(W_{\!{\boldsymbol{I}}})} +O(\epsilon) \\
&=& \Big|  \frac{  1}{-\frac{{\bmu}^{\mbox{\tiny{(0)}}}}{\rho^{\mbox{\tiny{(0)}}}}\!: (i\hat{\bk})^2 -   \eps^2  \frac{{\bmu}^{\mbox{\tiny{(2)}}}}{\rho^{\mbox{\tiny{(0)}}}}\!: (i \hat{\bk})^4 -{ \sigma}{\hat{\Omega}}^2} \Big|  \cdot \Big \|    \langle \rho {\phi_p(\boldsymbol{0}; \cdot)} \overline{\phi_p( {\bk};{\cdot})} \rangle \phi_p(\eps \hat{\bk};\tilde{\bx})- \phi_p(\boldsymbol{0}; \tilde{\bx}) -\eps \bchi^{\mbox{\tiny{(1)}}}(\tilde{\bx}) \cdot i  \hat{\bk} \nonumber \\
&& -\eps^2  \big( \langle \rho \bchi^{(1)} \otimes \overline{\bchi^{(1)}} \rangle  \phi_p(\boldsymbol{0}; \tilde{\bx})+   \bchi^{\mbox{\tiny{(2)}}}(\tilde{\bx}) \big) : (i \hat{\bk})^2  \Big\|_{L^2(W_{\!{\boldsymbol{I}}})} +O(\epsilon^4) \nonumber \\
&=& O(\epsilon^3) \qquad ( \mbox{using Proposition }\ref{Prop Section Higher order eqn 1}). \label{Main theorem proof Jup-Ju0 norm 2 second-order}
\end{eqnarray}
}Now from equations \eqref{Main theorem proof Jup-Ju0 norm 1 second-order} and \eqref{Main theorem proof Jup-Ju0 norm 2 second-order}, we obtain that
{\footnotesize
\begin{eqnarray} \label{Main theorem proof Jup-Ju0 norm 3 second-order}
\|\mathcal{J}_{\mathbb{R}^d} u^{(p)}-\mathcal{J}_{\mathbb{R}^d} u_2\|^2_{L^2(W_{\!{\boldsymbol{I}^*}};\mathcal{H}^0_{\bk} (W_{\!\boldsymbol{I}}))}  \le  c \epsilon^{-d} (O(\epsilon^3) )^2,
\end{eqnarray}
}for some constant $c$ independent of $\epsilon$. Thus this proves that $\|\mathcal{J}_{\mathbb{R}^d} u^{(p)}-\mathcal{J}_{\mathbb{R}^d} u_2\|_{L^2(W_{\!{\boldsymbol{I}^*}};\mathcal{H}^0_{\bk} (W_{\!\boldsymbol{I}}))} = \epsilon^{-d/2}O(\eps^3)$ and hence $\|u^{(p)}-u_2\|_{L^2(\mathbb{R}^d)} = \epsilon^{-d/2}O(\eps^3)$, i.e., $\|U^{(p)}-U_2\|_{L^2(\mathbb{R}^d)} = O(\eps^3)$. This completes the proof in the second-order case.
\end{proof}

\begin{remark}
 { We remark that $\bchi^{\mbox{\tiny{(3)}}}$ is needed for the computation of ${\bmu}^{\mbox{\tiny{(2)}}}$.}
 Furthermore, the functions $W_0$ and $W_1$ indeed are solutions to the following equations \eqref{Main envelop W0} and \eqref{Main envelop W2} respectively (one can directly use the explicit expression of $W_0$ \eqref{Main W0} and $W_1$   \eqref{Main W2} to verify this; and conversely, taking the Fourier transform of equation \eqref{Main envelop W0} and \eqref{Main envelop W2}, one can derive the  explicit expressions $W_0$ \eqref{Main W0} and $W_1$   \eqref{Main W2})   
\begin{eqnarray} 
&&{\bmu}^{\mbox{\tiny{(0)}}} \!:\! \nabla^2 W_0(\bx) + \rho^{\mbox{\tiny{(0)}}} \sigma{\hat{\Omega}}^2 W_0(\bx) \:=\, -\rho^{\mbox{\tiny{(0)}}} \mathcal{F}^{-1}[F] (\bx) \quad \mbox{in} \quad \mathbb{R}^d, \label{Main envelop W0}\\
&&{\bmu}^{\mbox{\tiny{(0)}}} : \nabla_{\br}^2 W_2(\br) + \eps^2  {\bmu}^{\mbox{\tiny{(2)}}}    : \nabla_{\br}^4 W_2(\br)   + \rho^{\mbox{\tiny{(0)}}} \sigma{\hat{\Omega}}^2 W_2(\br) \nonumber\\
&&\hspace{4cm}=\: -\rho^{\mbox{\tiny{(0)}}} \mathcal{F}^{-1}[F] (\br) \quad \mbox{in} \quad \mathbb{R}^d. \label{Main envelop W2}
\end{eqnarray}
{ The above effective field equations describe the leading-order  and second-order effective  wave motions, respectively. Only by direct calculations, one can derive the effective field equations via formal two-scale asymptotic expansions as well, and it is then directly seen that the above effective field equations \eqref{Main envelop W0}--\eqref{Main envelop W2} agree with those obtained via formal two-scale asymptotic expansions}, both in low- and high-frequency cases, see for instance \cite{meng2018dynamic,guzina2019rational}.
\end{remark}

\section{Numerical examples} \label{Numerical example}

Consider the wave motion~\eqref{PDE} with $d=2$ in a periodic medium depicted in Fig.~\ref{fig1}(a), whose unit (Wigner-Seitz) cell~\eqref{wgcell} contains centric circular inclusion of radius~$a=0.3$, see Fig.~\ref{fig1}(b). The coefficients inside the unit cell are given by 
\[
(G(\bx),\rho(\bx)) \:=\: 
\left\{ \begin{array}{ll}
(G_1=1,\rho_1=1), & \|\bx\|>a, \\
(G_2=6,\rho_2=20), & \|\bx\|<a. \end{array} \right.
\] 
Fig.~\ref{fig1}(c) and Fig.~\ref{fig1}(d) plot respectively the first Brillouin zone and the {  dispersion relationship} for the problem (first 14 branches) that features three complete band gaps. 

{We first illustrate our asymptotic model at low frequency. We take the  (scaled) excitation ``frequency'' as $\omega = i\eps$, i.e. $\omega^2 = -\eps^2$,
that (i) formally resides inside a band gap~\eqref{bg1}, and (ii) admits asymptotic representation~\eqref{drf1} with~$p=0$ and~$\sigma=-1$.} In the context of Assumption~\ref{assumption f}, we consider the source term~\eqref{source1} with $p=0$ and
 \begin{eqnarray}\label{Gforce}
F(\hat{\bk}) = \frac{1}{2\sqrt{\pi}} e^{-\frac{\| \hat{\bk }\|^2}{4}}.
\end{eqnarray}
%In essence, the body force $f(\eps\bx)$ stemming from~\eqref{source1} and~\eqref{Gforce} corresponds to a Gaussian distribution modulated by the product of the eigenfunction $\phi_1(\boldsymbol{0},\bx)$ and the mass density $\rho(\bx)$, whose standard deviation is $O(\eps^{-1})$ and whose amplitude decays exponentially in time as $e^{-\eps t}$. 
{ 
In what follows, we compare an ``exact'' (numerically computed) response of the medium due to~\eqref{source1} and \eqref{Gforce} with its leading-, first-, and second-order asymptotic approximations for different values of $\eps$.  We remark that, in the wavenumber-frequency space (described by the {  dispersion relationship}), the source ``triggers'' the acoustic branch $(\bk,\omega_0(\bk))$ near the origin $\bk=\boldsymbol{0}$.
}

\begin{figure}[h!] 
\centering{\includegraphics[width= 130 mm]{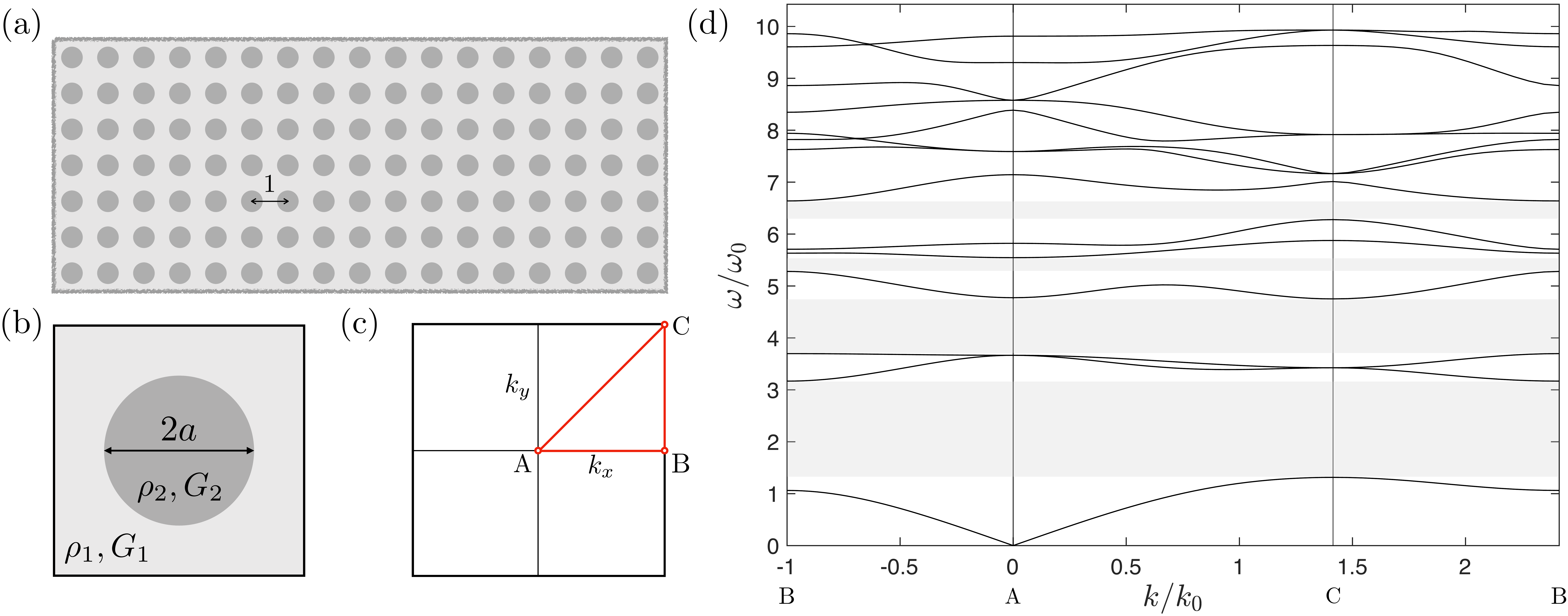}}\vspace*{0mm}
\caption{(a) Periodic medium~\eqref{Grho} with circular inclusions; (b) unit cell of periodicity~\eqref{wgcell}; (c) first Brillouin zone~\eqref{Brill}; (d) {  dispersion relationship} including the first 14 dispersion branches (band gaps are shaded). The normalization parameters are $k_0=\pi$ and $\omega_0=\sqrt{G_1/\rho_1}$. } \label{fig1} \vspace*{-0mm}
\end{figure}

To this end an ``exact'' solution is obtained, for each~$\eps$, via the finite element platform NGSolve~\cite{NGSolve}, where~$\mathbb{R}^2$ is approximated by a square domain 
\[
\mathcal{D}_{\tiny{\mbox{N}}}=[-(N+0.5),(N+0.5)]\times[-(N+0.5),(N+0.5)],
\]
with homogenous Dirichlet boundary conditions along its boundary~$\partial \mathcal{D}_{\tiny{\mbox{N}}}$. This domain is discretized using finite elements of order~3 and maximum size $h_1= 0.01125\eps$. The leading-, first- and second-order approximations of the solution are evaluated on a $64N\times 64N$ grid of points $\mathcal{G}_{\tiny{\mbox{N}}}$, by integrating numerically the expressions \eqref{Main leading-order U0}, \eqref{Main first-order U1} and \eqref{Main second-order U2} (with $p=0$), {after a change of variable $\hat{\bk}\to \bk=\epsilon\hat{\bk}$}, in MATLAB over the first Brillouin zone, {used to approximate}~{$\epsilon Y= \mathbb{R}^2 $}~{thanks to the strong exponential decay of}{~$F(\eps^{-1}\hat{\bk})$}. The eigenfunction $\phi_0$, cell functions $\bchi^{\mbox{\tiny{(1)}}},  \bchi^{\mbox{\tiny{(2)}}},  \bchi^{\mbox{\tiny{(3)}}}$ and effective coefficients $\rho^{\mbox{\tiny{(0)}}},\bmu^{\mbox{\tiny{(0)}}}$ and $\bmu^{\mbox{\tiny{(2)}}}$  featured in the approximations' expressions are evaluated using NGSolve by discretizing the unit cell with elements of order 5 and maximum length $h_2=0.0075$. The   eigenfunction $\phi_0(\boldsymbol{0},\bx)$ as well as the components of the affiliated cell functions $\bchi^{\mbox{\tiny{(1)}}}$ and $\bchi^{\mbox{\tiny{(2)}}}$ are shown in Fig. \ref{fig6}.\\ 

Here it should be noted that the perturbation parameter~$\eps$ physically signifies the amount of ``foray'' into the band gap, whereby larger values of~$\eps$ inherently yield faster (exponential) decay of the solution away from the source of disturbance, $f(\eps\bx)$. In terms of numerical simulations, this is the key alleviating factor that allows us to approximate the wave motion in~$\mathbb{R}^2$ with that in~$\mathcal{D}_{\tiny{\mbox{N}}}$ with homogeneous Dirichlet boundary conditions.  In this vein, the response of the medium is computed over $\mathcal{D}_{\tiny{\mbox{25}}}$ for $\eps=0.25$, and over $\mathcal{D}_{\tiny{\mbox{20}}}$ for $\eps=0.375$ and $\eps=0.5$. We also remark that the computation numerical responses for~$\eps\!<\!0.25$ was not feasible owing to excessive computational effort, in terms of larger values of~$N$, required to achieve sufficiently accurate numerical approximation of the wave motion in~$\mathbb{R}^2$.

Fig. \ref{fig4} plots the distribution over~$\mathcal{D}_{\tiny{\mbox{N}}}$ of~{$f_\eps(\eps\bx)$}, $u(\bx)$ and~$u_m(\cdot )=U_m(\eps \cdot)$ (which is given by \eqref{Main leading-order U0}--\eqref{Main second-order U2}) with $m=0,1,2$ for all three values of the perturbation parameter, namely~$\eps\in\{0.25,0.375,0.5\}$. An overall agreement between the solutions is evident from the display, as is the increase in fidelity of asymptotic approximation with the order of expansion. A detailed comparison is provided in Fig.~\ref{fig2} and Fig.~\ref{fig3}, which plot the variations of $u$ and~$u_m$ versus~$\bx=(x,y_0=const.)$ for $\eps=0.25$ and $\eps=0.5$, respectively. From the displays, we observe that all three approximations provide the same rate of decay as the exact solution; however, the fine solution detail in the vicinity of the peak load ($x\approx 0)$ is accurately approximated only by higher-order models. 

{  We further illustrate the asymptotic model at high frequency. 
We take $\Omega_\epsilon^2 = \epsilon^{-2} \omega_p^2(\boldsymbol{0}) -1$, i.e., the (scaled) excitation ``frequency'' as  $\omega^2 = \omega_p^2(\boldsymbol{0}) - \epsilon^2$ with $p=3$, that (i) formally resides inside a band gap between the $3$rd branch and $4$th branch, and (ii) admits asymptotic representation~\eqref{drf1} with~$p=3$ and~$\sigma=-1$. In the context of Assumption~\ref{assumption f}, we consider the source term~\eqref{source1} with $p=3$ given by $f_\eps(\epsilon \bx)=\frac{1}{2}\rho(\bx) \phi_3(\boldsymbol{0},\bx)e^{-\epsilon^2 \|\bx\|^2}$.

The eigenfunction $\phi_3$, cell functions $\bchi^{\mbox{\tiny{(1)}}},  \bchi^{\mbox{\tiny{(2)}}},  \bchi^{\mbox{\tiny{(3)}}}$ and effective coefficients $\rho^{\mbox{\tiny{(0)}}},\bmu^{\mbox{\tiny{(0)}}}$ and $\bmu^{\mbox{\tiny{(2)}}}$  featured in the approximations' expressions are evaluated using NGSolve by discretizing the unit cell with elements of order 5 and maximum length $h_3=0.021$. The corresponding eigenfunction $\phi_3(\boldsymbol{0},\bx)$ as well as the components of the affiliated cell functions $\bchi^{\mbox{\tiny{(1)}}}$ and $\bchi^{\mbox{\tiny{(2)}}}$ are shown in Fig. \ref{fig7}. Furthermore, Fig. \ref{fig8} plots $u_m(\cdot )=U_m(\eps \cdot)$ (which is given by \eqref{Main leading-order U0}--\eqref{Main second-order U2}) with $m=0,1,2$ for $\eps=0.25$ (left) and $\eps=0.5$ (right). The function $f_\eps(\eps\bx)$ (plotted over $10\times 10$ unit cells) is also included in the figure. Note that computation for the exact solution was not feasible owing to excessive computational effort (which is one of the reason to pursue high-order asymptotic models), in terms of larger values of~$N$, required to achieve sufficiently accurate numerical approximation of the wave motion in~$\mathbb{R}^2$. Nevertheless, it is possible to infer the increase in fidelity of asymptotic approximation with the order of expansion.}

Finally, for a better insight into the convergence of the asymptotic models, we next consider the relative approximation errors at low frequency (i.e. $p=0$ with $\omega$, $\epsilon$, $f$ given in the first numerical example) $e^{\tiny{\mbox{$(m)$}}}_{\tiny{\mbox{M}}}$ ($m=0,1,2$) given by
\begin{eqnarray}\label{relerr}
e^{\tiny{\mbox{$(m)$}}}_{\tiny{\mbox{M}}} (\eps)= \frac{\|u_m-u\|_{L^2(\mathcal{D}_{\tiny{\mbox{M-0.5}}})}} {\|u\|_{L^2(\mathcal{D}_{\tiny{\mbox{M-0.5}}})}},
\end{eqnarray}
\noindent where $M\leqslant N$. The domain integrals featured in~\eqref{relerr} are evaluated numerically by sampling the reference ``exact'' solution~$u$ and its approximations~$u_m$ over grid~$\mathcal{G}_{\tiny{\mbox{M}}}$. The values of $e^{\tiny{\mbox{$(m)$}}}_{\tiny{\mbox{M}}}(\eps)$ inherently depend on $M$, and were found to numerically converge at $M\!=\!15$ for all three values of the perturbation parameter, $\eps\in\{0.25,0.375,0.5\}$. Fig.~\ref{fig5} illustrates the observed scaling of $e^{\tiny{\mbox{$(m)$}}}_{\tiny{\mbox{M}}}(\eps)$, on the log-log scale, over this limited range of the perturbation parameter. Specifically, we find that the apparent linear trends (indicated by dashed lines) have slopes $e^{\tiny{\mbox{(0)}}}_{\tiny{\mbox{15}}}=O(\eps^{1.11})$, $e^{\tiny{\mbox{(1)}}}_{\tiny{\mbox{15}}}=O(\eps^{2.00})$ and $e^{\tiny{\mbox{(2)}}}_{\tiny{\mbox{15}}}=O(\eps^{3.06})$, which is in good agreement with the expected result $\lim_{\tiny{\mbox{M}}\to\infty}e^{\tiny{\mbox{(m)}}}_{\tiny{\mbox{M}}}=O(\eps^{m+1})$ due to Theorem~\ref{finalthm}.

\begin{figure}[h!] 
\centering{\includegraphics[height= 60 mm]{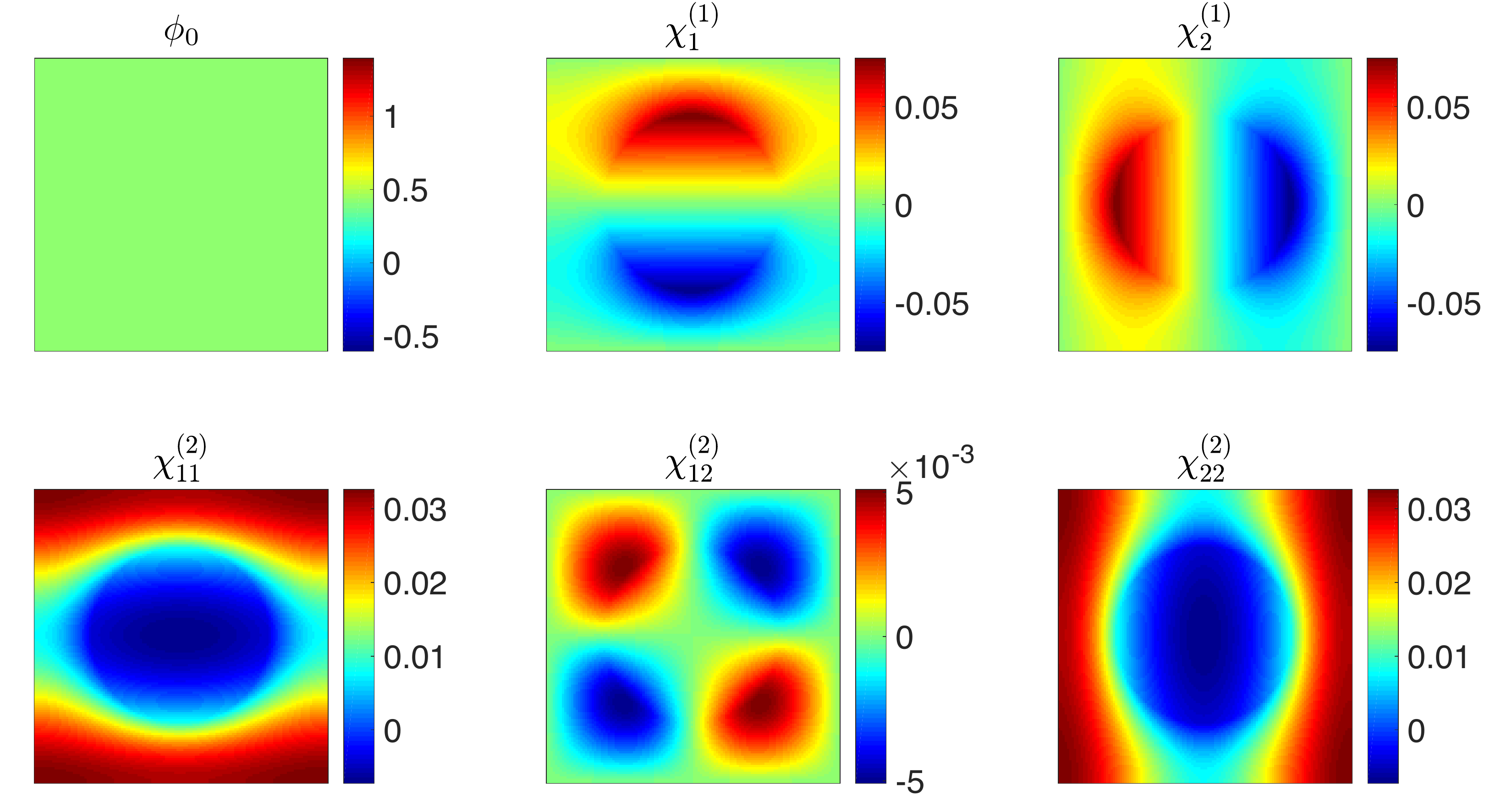}}\vspace*{-3mm}
\caption{Variation of eigenfunction $\phi_0(\boldsymbol{0},\bx)$ and components of the affiliated cell functions $\bchi^{\mbox{\tiny{(1)}}}(\boldsymbol{0},\bx)$ and $\bchi^{\mbox{\tiny{(2)}}}(\boldsymbol{0},\bx)$ (which are computed from $\phi_0(\boldsymbol{0},\bx)$) over the unit cell of periodicity~\eqref{wgcell}.} \label{fig6} \vspace*{-0mm}
\end{figure}

\begin{figure}[h!] 
\centering{\includegraphics[width= 130 mm]{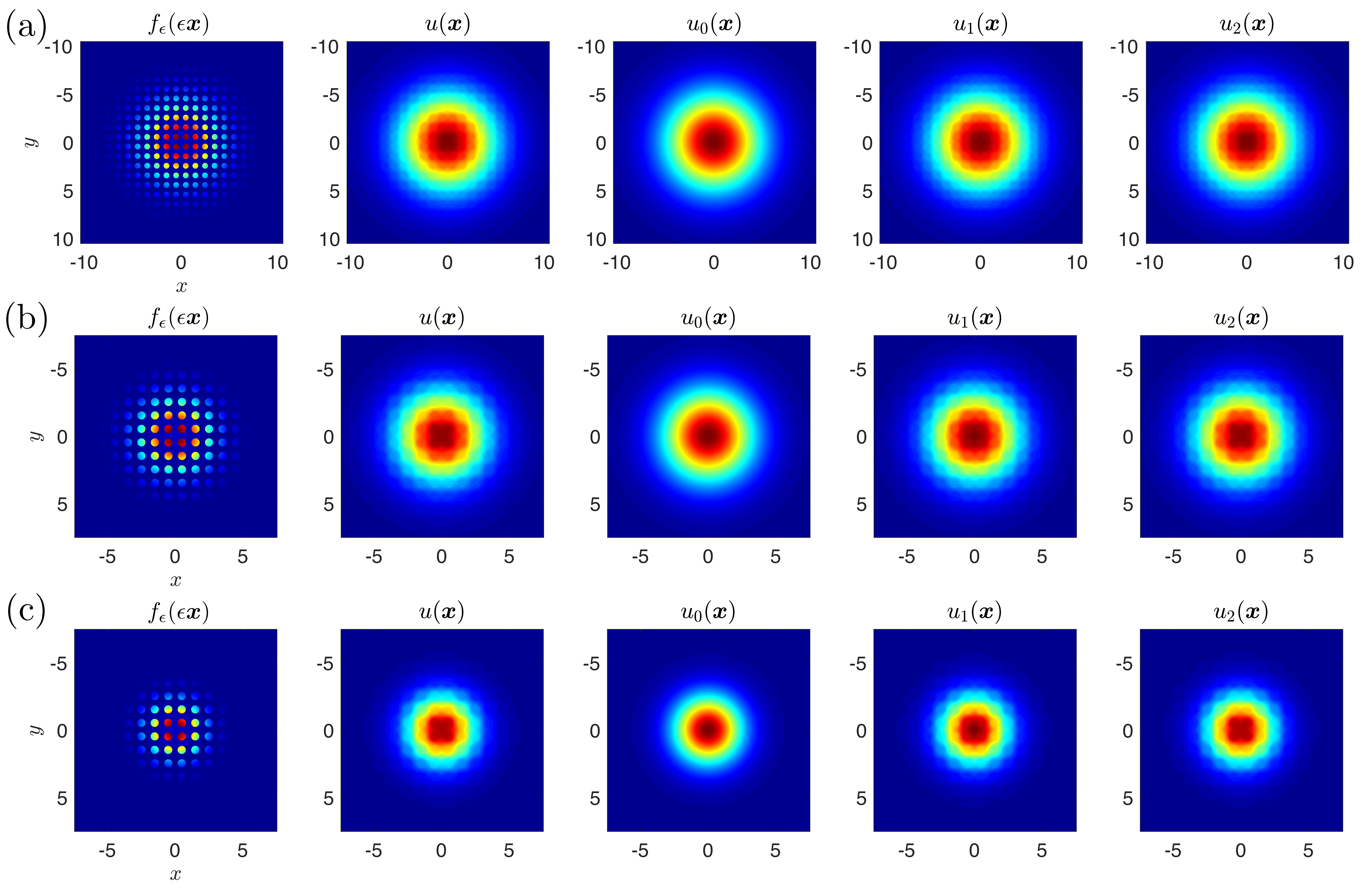}}\vspace*{-3mm}
\caption{Variation of the source term  {$f_\eps(\eps\bx)$}, ``exact'' solution $u(\bx)$, and its approximations $u_m(\bx)$ $(m\!=\!0,1,2)$ over: (a) domain $\mathcal{D}_{\tiny{\mbox{10}}}$ for $\eps=0.25$, (b) domain $\mathcal{D}_{\tiny{\mbox{7}}}$ for $\eps=0.375$, and (c) domain $\mathcal{D}_{\tiny{\mbox{7}}}$ for $\eps=0.5$. The color scale used for~{ $f_\epsilon$} is different from that used for~$u$ and~$u_m$.} \label{fig4} \vspace*{-0mm}
\end{figure}

\begin{figure}[h!] 
\centering{\includegraphics[width= 135 mm]{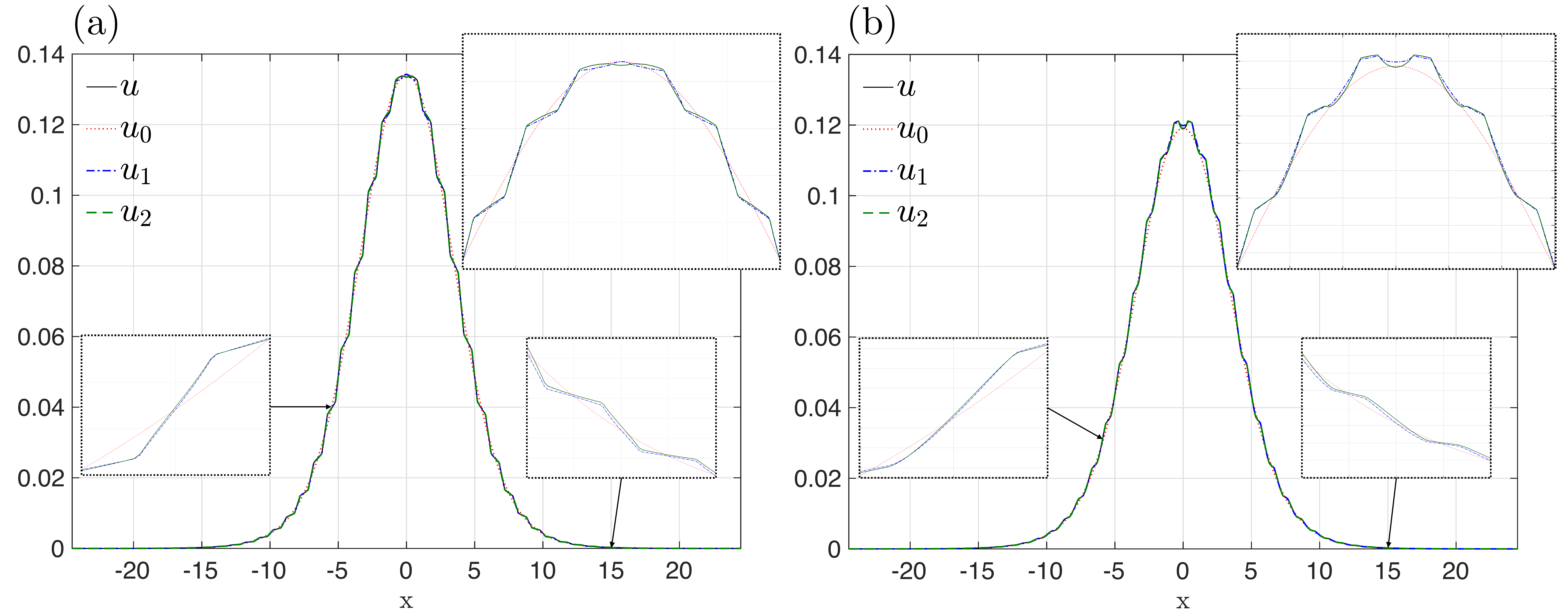}}\vspace*{-3mm}
\caption{``Exact'' solution, $u$, of the periodic medium ($\eps=0.25$) versus asymptotic approximations $u_m$ $(m\!=\!0,1,2)$ along line~$\bx=(x,y_0)$ for: (a) $y_0=0.5$ and (b) $y_0=1.75$. In each panel, the inserts provide magnified detail of the responses.} \label{fig2} \vspace*{-0mm}
\end{figure}

\begin{figure}[h!] 
\centering{\includegraphics[width= 135 mm]{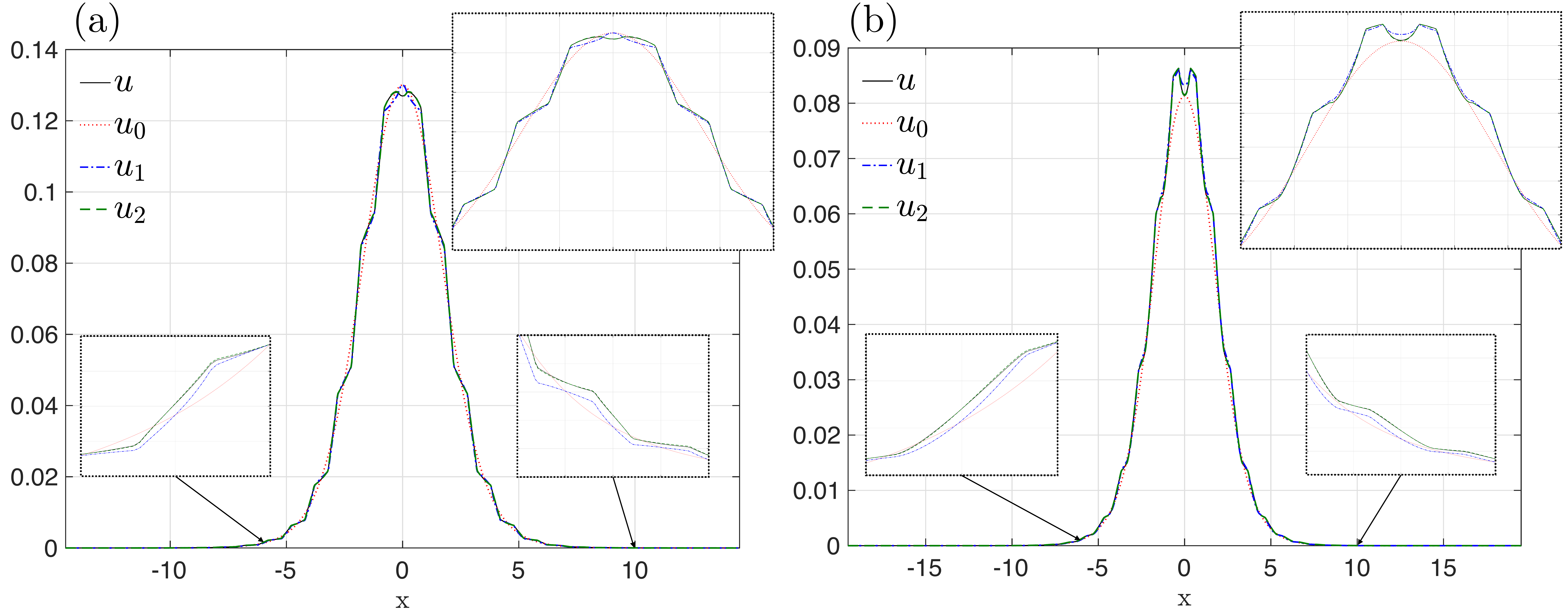}}\vspace*{-3mm}
\caption{``Exact'' solution, $u$, of the periodic medium ($\eps=0.5$) versus asymptotic approximations $u_m$ $(m\!=\!0,1,2)$ along line~$\bx=(x,y_0)$ for: (a) $y_0=0.5$ and (b) $y_0=1.75$. In each panel, the inserts provide magnified detail of the responses.} \label{fig3} \vspace*{-0mm}
\end{figure}

\begin{figure}[h!] 
\centering{\includegraphics[height= 60 mm]{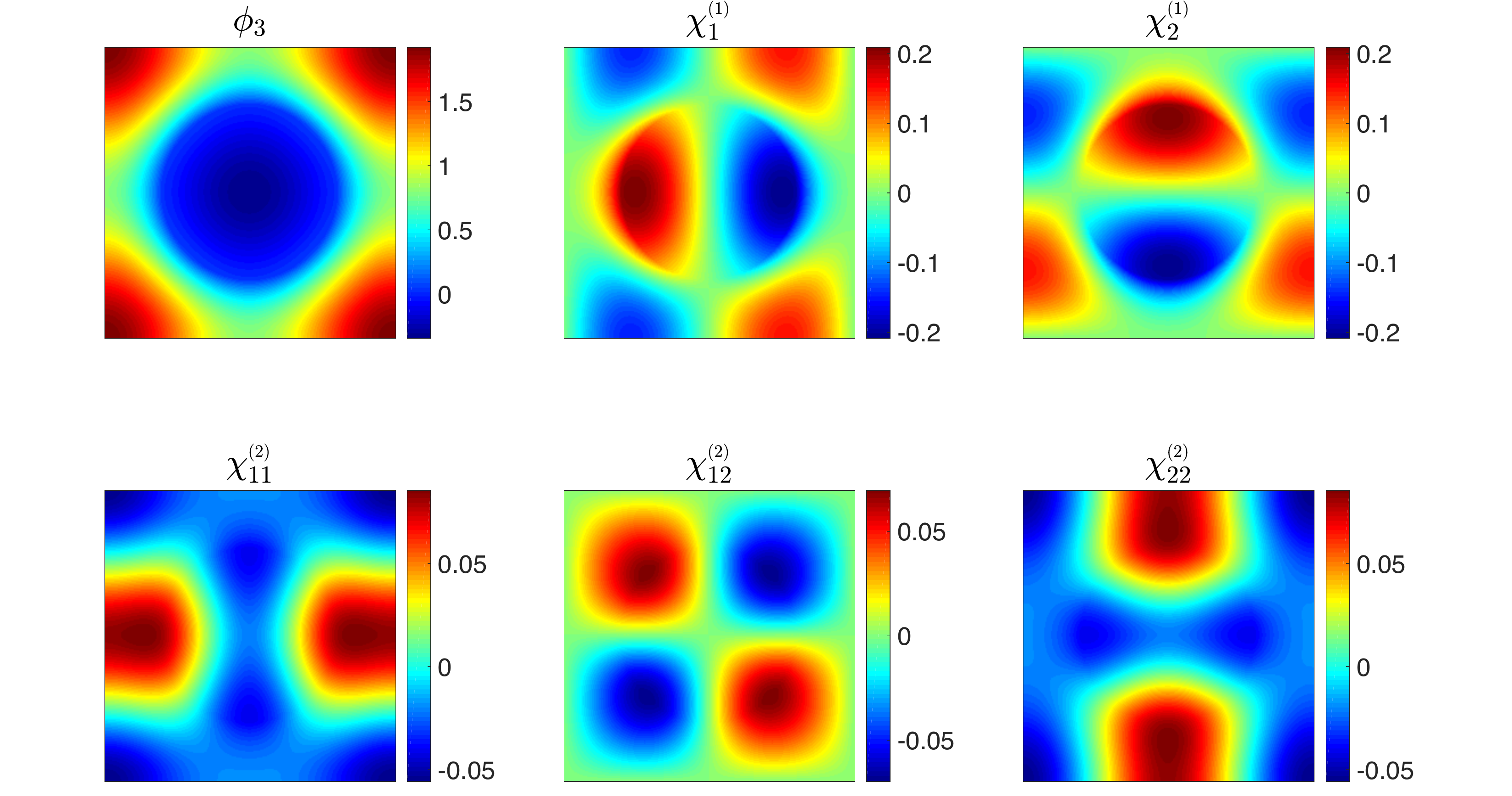}}\vspace*{-1mm}
\caption{Variation of eigenfunction $\phi_3(\boldsymbol{0},\bx)$ and components of the affiliated cell functions $\bchi^{\mbox{\tiny{(1)}}}(\boldsymbol{0},\bx)$ and $\bchi^{\mbox{\tiny{(2)}}}(\boldsymbol{0},\bx)$ (which are computed from $\phi_3(\boldsymbol{0},\bx)$) over the unit cell of periodicity~\eqref{wgcell}.} \label{fig7} \vspace*{-0mm}
\end{figure}

\begin{figure}[h!] 
\centering{\includegraphics[width= 135 mm]{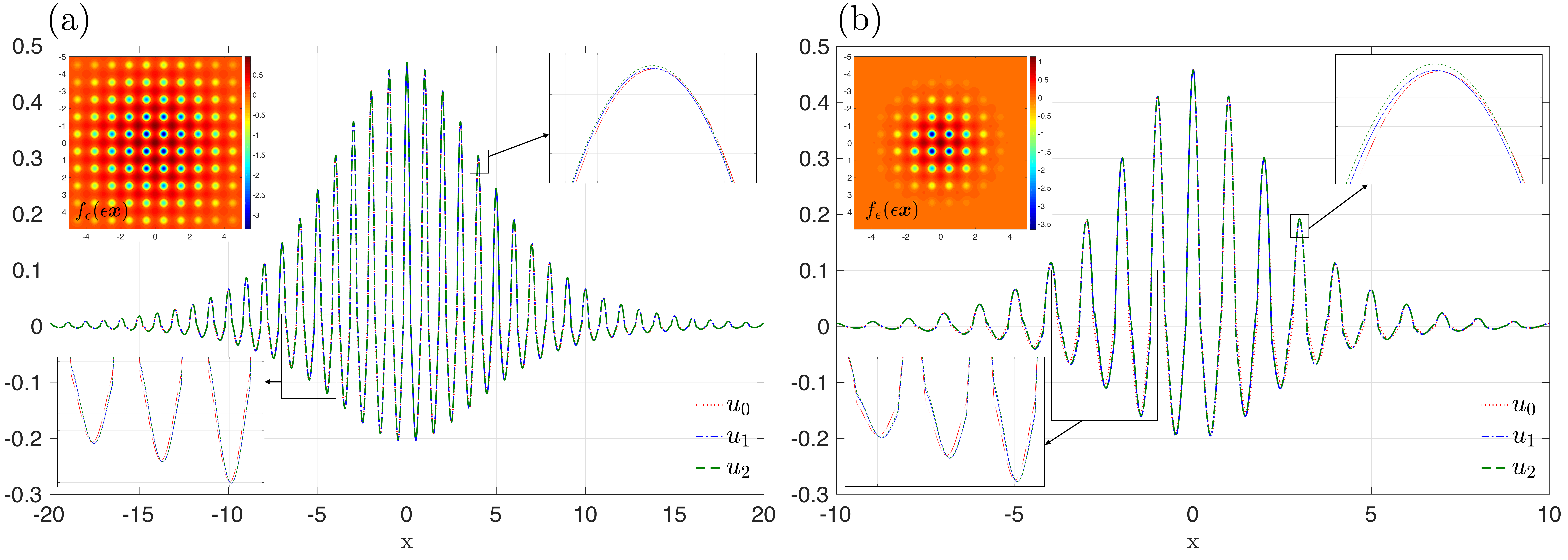}} \vspace*{-3 mm}
\caption{Asymptotic approximations $u_m$ $(m\!=\!0,1,2)$ for $p=3$, along line~$\bx=(x,0.5)$ for: (a) $\epsilon=0.25$ and (b) $\epsilon=0.5$. In each panel, the inserts provide magnified detail of the response approximations and the source  {$f_\eps(\epsilon \bx)$} that is plotted over $10\times 10$ unit cells.} \label{fig8} \vspace*{-0mm}
\end{figure}

%\begin{figure}[h!] 
%\centering{\includegraphics[width= 70 mm]{fig_9b}\includegraphics[width= 70 mm]{fig_8b}}\vspace*{-1mm}
%\caption{Left: $\eps = 0.5$. Right: $\eps=0.25$. $p=3$. The source $f(\epsilon \bx)$ is ploted over $10\times 10$ unit cells. Asymptotic approximations $u_m$ $(m\!=\!0,1,2)$ along line~$\bx=(x,y_0)$ for  $y_0=0.5$. In each panel, the inserts provide magnified detail of the responses.} \label{fig8} \vspace*{-0mm}
%\end{figure}

\begin{figure}[h!] \vspace*{3mm}
\centering{\includegraphics[width= 80 mm]{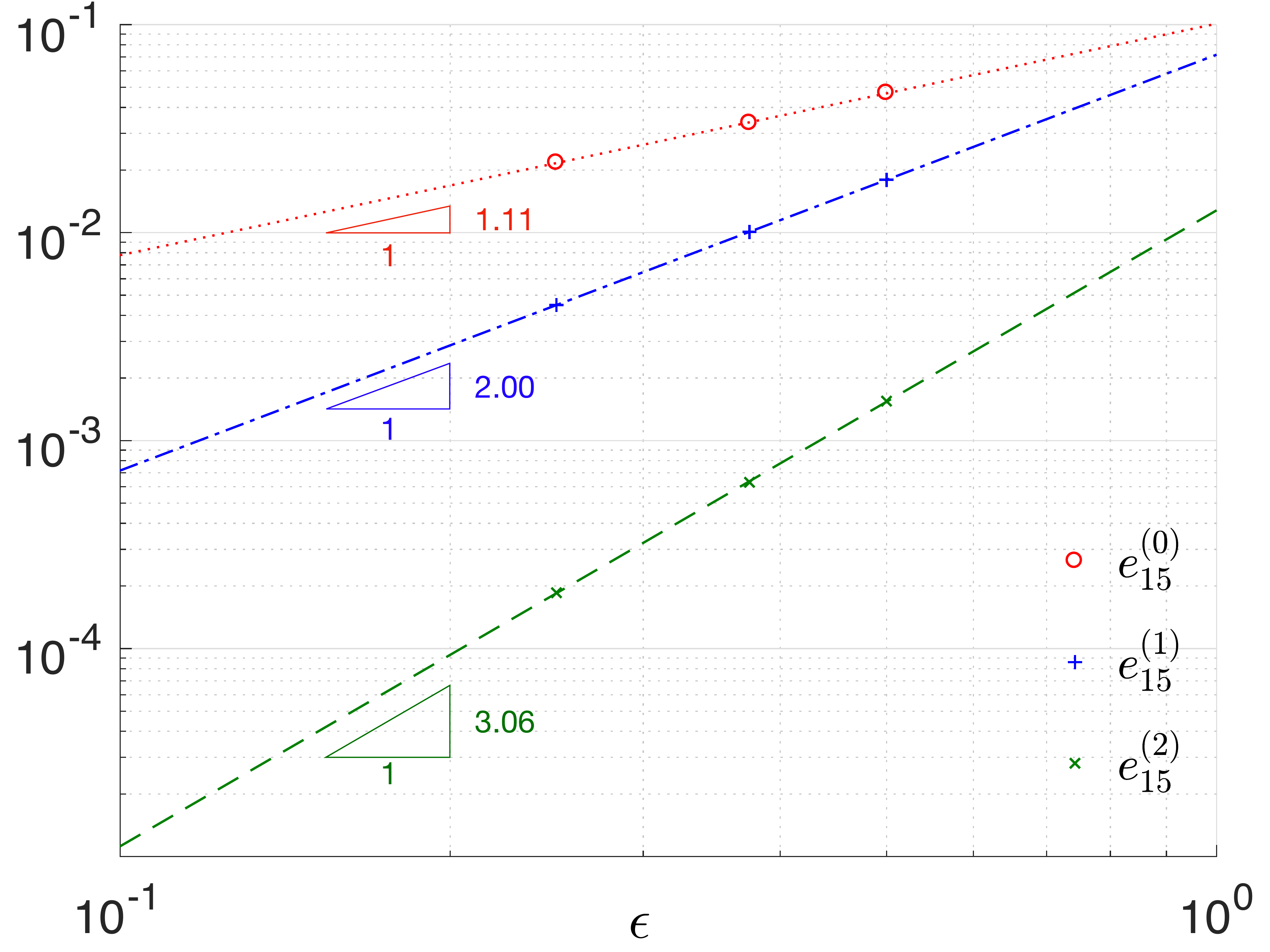}}\vspace*{-2mm}
\caption{Scaling of the relative approximation error~\eqref{relerr} ($m=0,1,2$) for $\eps\in\{0.25,0.375,0.5\}$. Dashed lines indicate the respective linear trends.} \label{fig5} \vspace*{0mm}
\end{figure}
%----------------------------------------------------------------------------------------------------------------
\section{Summary}\label{Summary} 

{In this investigation we establish a convergent, second-order asymptotic model of the high-frequency, low-wavenumber wave motion in a highly-oscillating periodic medium driven by a source term. Within this framework, the driving frequency is further assumed to reside inside a band gap, while the source term is restricted to a class of functions which generate the long-wavelength motion. We first use the (Floquet-)Bloch transform to formulate an equivalent variational problem in the unit (Wigner-Seitz) cell. By investigating the source term's projection onto certain periodic functions, we establish a convergent, second-order homogenized model via asymptotic  expansion of (i) the nearest dispersion branch, (ii) germane (low-wavenumber) eigenfunction, and (iii) the source term. A set of numerical results is included to illustrate the utility of the homogenized model in representing, with high fidelity, both macroscopic and microscopic features of the exact wave motion, and to demonstrate the obtained convergence result.}

\bibliographystyle{plain}

%-----------------------------------------------------------------------------------------------------------------------------------------------------------------------------------%
\end{document}